\newtheorem{theorem}{Theorem}
\newtheorem{proposition}{Proposition}
\newtheorem{definition}{Definition}
\newtheorem{corollary}{Corollary}
\newtheorem{lemma}{Lemma}
\newtheorem{remark}{Remark}
\newcommand{\R}{\mathbb{R}}
\newcommand{\N}{\mathbb{N}}
\newcommand{\cC}{\mathcal{C}}
\newcommand{\cH}{\mathcal{H}}
\newcommand{\fc}{\mathfrak{c}}
\newcommand{\fr}{\mathfrak{r}}
\newcommand{\iid}{ {\stackrel{i.i.d.}{\sim}} }
\newcommand{\Prb}{\mathbb{P}}
\newcommand{\EE}{\mathbb{E}}
\newcommand{\cN}{\mathcal{N}}
\newcommand{\cL}{\mathcal{L} }
\newcommand{\cS}{\mathcal{S} }
\newcommand{\cG}{\mathcal{G} }
\newcommand{\var}{\mbox{\rm var}}
\newcommand{\cov}{\mbox{\rm cov}}
\title{Simultaneous Confidence Bands for Functional Data Using the Gaussian Kinematic Formula}
\author{Fabian J.E. Telschow, Armin Schwartzman}
\begin{document}

\maketitle

\begin{abstract}
This article constructs simultaneous confidence bands (SCBs) for functional parameters using the Gaussian Kinematic formula of $t$-processes (tGKF). Although the tGKF relies on Gaussianity, we show that a central limit theorem (CLT) for the parameter of interest is enough to obtain asymptotically precise covering rates even for non-Gaussian processes. As a proof of concept we study the functional signal-plus-noise model and derive a CLT for an estimator of the Lipschitz-Killing curvatures, the only data dependent quantities in the tGKF SCBs. Extensions to discrete sampling with additive observation noise are discussed using scale space ideas from regression analysis. Here we provide sufficient conditions on the processes and kernels to obtain convergence of the functional scale space surface.

The theoretical work is accompanied by a simulation study comparing different methods to construct SCBs for the population mean. We show that the tGKF works well even for small sample sizes and only a Rademacher multiplier-$t$ bootstrap performs similarily well. For larger sample sizes the tGKF often outperforms the bootstrap methods and is computational faster. We apply the method to diffusion tensor imaging (DTI) fibers using a scale space approach for the difference of population means. R code is available in our Rpackage SCBfda.
\end{abstract}
\tableofcontents

\section{Introduction}
	  In the past three decades functional data analysis has received increasing interest due to the possibility of recording and storing data collected with high frequency and/or high resolution in time and space. Many different methods have been developed to study these particular data objects; for overviews of some recent developments in this fast growing field we refer the reader to the review articles \citet{Cuevas2014} and \citet{Wang2016}.
	  
	  Currently, the literature on construction of simultaneous confidence bands (SCBs) for functional parameters derived from repeated observations of functional processes is sparse. The existing methods basically split into two seperate groups. The first group is based on functional central limit theorems (fCLTs) in the Banach space of continuous functions endowed with the maximum metric and evaluation of the maximum of the limiting Gaussian process often using Monte-Carlo simulations with an estimated covariance structure of the limit process, cf. \citet{Bunea2011, Degras2011, Degras2017, Cao2012,Cao2014}. The second group is based on the bootstrap, among others \citet{Cuevas2006, Chernozhukov2013, Chang2017}, \citet{Belloni2018}. 

	  Although these methods are asymptotically achieving the correct covering probabilities, their small sample performance is often less impressive as for example discovered in \cite{Cuevas2006}. Figure \ref{fig:CovRateSmallSamples} shows the typical behaviour observed in our simulations of the covering rates of SCBs for the (smoothed) population mean using $N$ replications of a (smoothed) Gaussian process with an unknown non-isotropic covariance function. The general pattern is that SCBs using a non-parametric bootstrap-$t$ yield too wide confidence bands and thus have overcoverage, whereas a functional Gaussian multiplier bootstrap inspired by \cite{Chernozhukov2013} and \citet{Degras2011} asymptotic SCBs lead to undercoverage. We used here SCBs for the smoothed population mean, since the R-package \emph{SCBmeanfd} by Degras does require smoothing.

	  Contrary to most of the current methods, which assume that each functional process is observed only at discrete points of their domain, we start from the viewpoint that the whole functional processes are observed and sufficiently smooth, since often the first step in data analysis is anyhow smoothing the raw data. This implies that we construct SCBs only for the smoothed population parameter and thereby circumventing the bias problem altogether to isolate the covering aspect. In principle, however, it is possible to apply our proposed method also to discretely observed data and inference on the true (non-smoothed) mean as for example in settings described in \citet{Zhang2007, Degras2011, Zhang2016}. But for the sake of simplicity we restrict ourselves to only robustify against the choice of a smoothing parameter by introducing SCBs for scale space surfaces, which were proposed in \cite{Chaudhuri1999}.
	  
	  The main contribution of this paper is proposing the construction of SCBs for general functional parameters based on the Gaussian kinematic formula for $t$-processes (tGKF) and studying its theoretical covering properties. Moreover, as an important case study we explain and prove how all the required assumptions can be satisfied for construction of SCBs for the population mean in functional signal-plus-noise models and scale spaces. 
	  A first glance on the improvement, especially for small samples, using the tGKF is shown in Figure \ref{fig:CovRateSmallSamples}.
	  
	  A second contribution is that we compare to a studentized version of the multiplier bootstrap based on residuals, which we call \emph{multiplier-$t$} bootstrap (Mult-$t$), which also improves the small sample properties of the bootstrap SCBs considerably, if the multiplier is chosen to be Rademacher random variables. Since the scope of the paper is to analyse the theoretical properties of the use of the tGKF in construction of SCBs, we do not provide any rigorous mathematical theory about the Mult-$t$ bootstrap. Theoretical analysis of this bootstrap method, in particular the influence of the choice of multiplier, is an interesting opportunity for future research. For recent work on the performance of different multipliers in similar scenarios, see e.g., \cite{Deng2017} and \cite{Davidson2008}.
	  
	  \begin{figure}\centering
		  \includegraphics[width=1\textwidth]{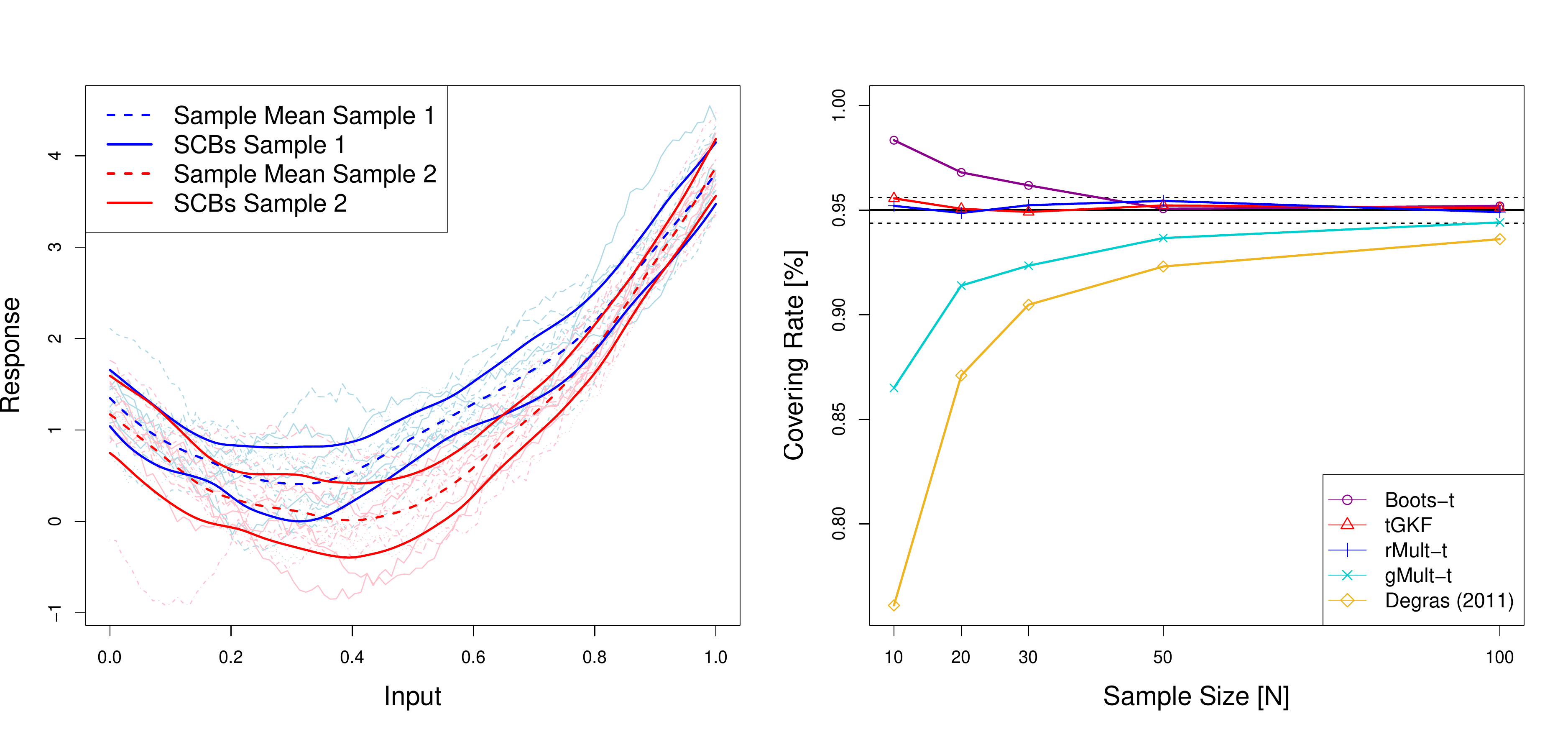}
		  \caption{ \textbf{Left:} Samples of two Gaussian process with observation noise and different population mean function and SCBs from tGKF for the smoothed data. \textbf{Right:} Example of dependence of the covering rate of SCBs of the smoothed mean function on the sample size, constructed using various methods.}
		    \label{fig:CovRateSmallSamples}
	  \end{figure}

	  SCBs usually require estimation of the quantiles of the maximum of a process. Often this process is the maximum of a process derived from a functional central limit theorem (fCLT); thus the maximum of a Gaussian process. Our proposed method tries to resolve the poor covering rates for small sample sizes by imposing two small but significant changes. Firstly, we use a pointwise $t$-statistic, since for small sample sizes and unknown variance this would be the best choice to construct confidence bands for Gaussian random variables. Secondly, we employ a formula which is known to approximate the tail distribution of the maximum of a pointwise $t$-distributed process very well. This formula is known as the \emph{Gaussian kinematic Formula for $t$-processes} (tGKF).
	  
	  In a nutshell the tGKF as proven in even more generality in \cite{Taylor2006} expresses the expected Euler characteristic (EEC) of the excursion set of a random process $F(Z_1,...,Z_N)$, $F\in \cC^2(\R^N, \R)$, derived from unit variance Gaussian processes $Z_1,...,Z_N\iid Z$ indexed by a nice compact subset of $\R^D$ in terms of a finite sum of $N$ known functions with corresponding $N$ coefficients. These coefficients are called \emph{Lipschitz-Killing curvatures} (LKCs) and depend solely on the random process $Z$. Earlier versions of this formula derived in \citet{Adler1981} for Gaussian processes were used by \cite{Worsley1996},\cite{ Worsley2004} in neuroscience for multiple testing corrections. \citet{Takemura2002} have shown that the GKF (for Gaussian processes) is closely related to the \emph{Volume of Tubes formula} dating all the way back to \cite{Working1929} and which has been applied for SCBs in nonlinear regression analysis, e.g., \cite{Johansen1990,Krivobokova2010, Lu2017}.

	  In this sense the version of the tGKF of \citet{Taylor2006} can be interpreted as a generalization of the volume of tube formula for repeated observations of functional data. The most important and valuable aspect of the tGKF is that it is a non-asymptotic formula in the sample size $N$. Hence it is suitable for applications with small sample sizes. Moreover, it has been proven in \cite{Taylor2005} that the EEC is a good approximation of the quantile of the maximum value of Gaussian- and $\chi^2$-processs over their domain. Although it is not yet proven, simulations suggest that this property seems to be also true for more complicated  Gaussian related processes like $t$-processes. Therefore, it can be used to control the family wise error rate (FWER) as in \citet{TaylorWorsley2007, Taylor2009}. To the best of our knowledge, the full potential of the tGKF for functional data seems to not have been explored yet in the statistical literature and we will try to fill this gap and tie together some loose strings for the challenge of constructing SCBs.
	  
	  Our main theoretical contributions are the following. In Theorem \ref{thm:AsymptoticValidity} we show based on the main result in \citet{Taylor2005} that asymptotically the error in the covering rate of SCBs for a function-valued population parameter based on the GKF for $t$-processes can be bounded and is small, if the targeted covering probability of the SCB is sufficiently high. This requires no Gaussianity of the observed processes. It only requires that the estimator of the targeted function-valued parameter fulfills a fCLT in the Banach space of continuous functions with a sufficiently regular Gaussian limit process. Moreover, it requires consistent estimators for the LKCs. Using this general result we derive SCBs for the population mean and the difference of population means for functional signal-plus-noise models, where we allow the error processes to be non-Gaussian. Especially we derive for such models defined over sufficiently regular domains $\cS\subset\R^D$, $D=1,2$, consistent estimators for the LKCs. These estimators are closely related to the discrete estimators in \citet{TaylorWorsley2007}, but we can even prove CLTs for our estimators. In order to deal with observation noise we give in Theorem \ref{thm:CLTscaleProcess} sufficient conditions to have weak convergence of a scale space process to a Gaussian limit extending the results from \cite{Chaudhuri2000} from regression analysis to repeated observations of functional data. Additionally, we prove that also the LKCs of this limit process can be consistently estimated and therefore Theorem \ref{thm:AsymptoticValidity} can be used to bound the error in the covering rate for SCBs of the population mean of a scale space process.
	  
	  These theoretical findings are accompanied by a simulation study using our Rpackage \emph{SCBfda}, which can be found on \url{https://github.com/ftelschow/SCBfda} and a data application to diffusion tensor imaging (DTI) fibers using a scale space approach for the difference of population means. In the simulation study we compare the performance of the tGKF approach to SCBs for different error processes mainly with bootstrap approaches and conclude that the tGKF approach does not only often give better coverings for small sample sizes, but also outperforms bootstrap approaches computationally. Moreover, the average width of the tGKF confidence bands is lower for large sample sizes.
	  
	  

	\paragraph{Organisation of the Article}
	Our article is organized as follows: In Section \ref{sec:SCB} we describe the general idea of construction of SCBs for functional parameters. Section \ref{sec:ModelAndAssumptions} defines the functional signal-plus-noise model and general properties on the error processes, which are necessary to prove most of our theorems. The next section explains how SCBs can be constructed in praxis and especially explains how the tGKF can be used for this purpose. Section \ref{sec:Asymptotics} finally proves asymptotic properties of the SCBs constructed using the tGKF for general  functional parameters. The latter require consistent estimation of the LKCs, which are discussed for the functional signal-plus-noise model in Section \ref{sec:LKCestimation} together with the new CLTs for the LKCs. Robustification using SCBs for Scale Space models can be found in \ref{sec:ObsNoise}. In Section \ref{sec:Simulations} we compare our proposed method in different simulations to competing methods to construct SCBs in the case of the functional signal-plus-noise model for various settings, which is followed in Section \ref{sec:DTI} by a data example.
	\FloatBarrier
\section{Simultaneous Confidence Bands}
    \FloatBarrier
\subsection{SCBs for Functional Parameters}\label{sec:SCB}
    We describe now a general scheme for construction of simultaneous confidence bands (SCBs) for a functional parameter $s\mapsto\eta(s)$, $s\in\cS$, where $\cS$ is a compact metric space. Note that all functions of $s$ hereafter will be assumed to belong to $\cC(\cS)$ the space of continuous functions from $\cS$ to $\R$.
    
    Assume we have estimators $s\mapsto\hat\eta_N(s)$ of $\eta$ and $s\mapsto\hat\varsigma_N(s)$ estimating a different functional parameter $s\mapsto\varsigma(s)$ fulfilling
    \begin{equation*}
	      \hspace{-0cm}\textbf{(E1)} ~ ~ ~ \tau_N\frac{\hat\eta_N-\eta}{\varsigma}\xRightarrow{N\rightarrow\infty} \cG(0,\fr)\,,~~~~~~
	      \textbf{(E2)}~~~ \Prb\left( \lim_{N\rightarrow\infty} \Vert \hat\varsigma_N - \varsigma \Vert_\infty = 0 \right) =1\,.
    \end{equation*}
    Here and whenever convergence of random continuous functions is considered, ``$\Rightarrow$'' denotes weak convergence in $\cC(\cS)$ endowed with the maximums norm $\Vert\cdot\Vert_\infty$. In \textbf{(E1)}, $\cG(0,\fr)$ is a mean zero Gaussian process with covariance function $\fr$ satisfying $\fr(s,s)=1$ for all $s\in\cS$, $\{\tau_N\}_{N\in\N}$ is a scaling sequence of positive numbers.  Assumptions \textbf{(E1)} and \textbf{(E2)} together with the functional version of Slutzky's Lemma  imply
    \begin{equation}\label{eq:RatioCLT}
		  \tau_N\frac{\hat\eta_N-\eta}{\hat\varsigma_N} \xRightarrow{N\rightarrow\infty} \cG(0,\fr)\,.
    \end{equation}
    Thus, it is easy to check that the collection of intervals 
    \begin{equation}\label{eq:SCBconstruction}
		  SCB(s,q_{\alpha,N}) = \Big[ \hat\eta_N(s) - q_{\alpha,N} \tfrac{\hat\varsigma_N(s)}{\tau_N} , \ \hat\eta_N(s) + q_{\alpha,N} \tfrac{\hat\varsigma_N(s)}{\tau_N} \Big]
    \end{equation}
    form $(1-\alpha)$-simultaneous confidence bands of $\eta$, i.e.
    \begin{equation*}
        \Prb\Big( \forall s\in \cS: \eta(s)\in SCB(s,q_{\alpha,N}) \Big) = 1-\alpha\,,
    \end{equation*}
    provided that
    \begin{equation}\label{eq:FiniteNQuantile}
		  \Prb\Bigg( \max_{s\in \cS} \tau_N\left\vert  \frac{\hat\eta_N(s) - \eta(s)}{\hat\varsigma_N(s)} \right\vert > q_{\alpha,N} \Bigg) = \alpha\,.
    \end{equation}
    Unfortunately, the quantiles $q_{\alpha,N}$ are in general unknown and need to be estimated. Therefore, the main contribution of this article is the study and comparison of different estimators for $q_{\alpha,N}$.
    
    In principle, there are two general approaches. Limit approximations try to estimate $q_{\alpha,N}$ by approximations of the quantiles of the asymptotic process, i.e.
      \begin{equation}\label{eq:AsymptoticQuantile}
		\Prb\Big( \max_{s\in \cS} \left\vert \cG(0,\fr) \right\vert > q_\alpha \Big) \geq \alpha\,,
    \end{equation}
    as done in \citet{Degras2011, Degras2017} for the special case of a signal-plus-noise model and the local linear estimator of the signal. Here usually the covariance function $\fr$ is estimated and then many samples of the Gaussian process $\cG(0,\hat\fr)$ are simulated in order to approximate $q_{\alpha}$.
    
    Better performance for finite sample sizes can be achieved by approximating $q_{\alpha,N}$ directly by bootstrap approaches such as a fully non-parametric bootstrap proposed in \citet{Degras2011}, which is inspired by the bootstrap-$t$ confidence intervals (e.g., \cite{Ciccio1996}),.
    
    Moreover, the tGKF approach, which we will introduce in Section \ref{scn:tGKFquantileEstimator}, uses a ``parametric'' estimator of $q_{\alpha,N}$, approximating the l.h.s of \eqref{eq:RatioCLT} by a $t$-process over $\cS$.        

\FloatBarrier
\subsection{Functional Signal-Plus-Noise Model}\label{sec:ModelAndAssumptions}
\FloatBarrier
	  Our leading example will be SCBs for the population mean curve in a functional signal-plus-noise model which we will introduce now. Let us assume that $\cS\subset\R^D$, $D\in \N$, is a compact set with piecewise $\cC^2$-boundary $\partial \cS$. The \emph{functional signal-plus-noise model} is given by
	  \begin{equation}\label{eq:ContModel}
		      Y(s) = \mu(s) + \sigma(s)Z(s)\,, \text{ for }s\in \cS\,.
	  \end{equation}
	  Here we assume that $\mu,\sigma$ are continously differentiable functions on $\cS$ and $Z$ is a stochastic process with zero mean and covariance function $\cov\left[ Z(s),Z(s') \right]=\mathfrak{c}(s,s')$ for $s,s'\in \cS$ satisfying $\mathfrak{c}(s,t)=1$ if and only if $s=t$. Moreover, we introduce the following useful properties of stochastic processes.
	  \begin{definition}
	      We say a stochastic process $Z$ with domain $\cS$ is \emph{$(\cL^p,\delta)-$Lipschitz}, if there is a (semi)-metric $\delta$ on $\cS$ continuous w.r.t.  the standard metric on $\R^D$ and a random variable $A$ satisfying $\EE\big[ \vert A \vert^p \big] < \infty$ such that 
		      \begin{equation}\label{eq:LipAssump}
				\big\vert Z(s)-Z(s')\big\vert \leq A \delta(s,s') \text{ for all } s,s'\in \cS
		      \end{equation}
		      and $\int_0^1 H^{1/2}(\cS,\delta,\epsilon)d\epsilon <\infty$, where $H(\cS,\delta,\epsilon)$ denotes the metric entropy function of the (semi)-metric space $(\cS,\delta)$, e.g., \citet[Def. 1.3.1.]{Adler2007}.
	  \end{definition}
	  \begin{remark}
			Any $(\cL^2,\delta)-$Lipschitz process $Z$ has necessarily almost surely continuous sample paths. Moreover, this property is the main ingredient in the version of a CLT in $\cC(\cS)$ proven in \cite{Jain1975} or \citet[Section 10.1]{Ledoux2013}. However, there are different results on CLTs in $\cC(\cS)$ with different assumptions on the process, which in principle could replace this condition. For example for $D=1$ we could also use the condition 
			\begin{equation*}
			      \EE\left[ \big(Z(s)-Z(s')\big)^2 \right] < f(\vert s-s'\vert) \text{ for }\vert s-s'\vert \text{ small and  }\int_0^1 y^{-\frac{3}{2}}f^{\frac{1}{2}}(y)\,dy<\infty
			\end{equation*}
			where $f:[0,1]\rightarrow\R_{\geq0}$ is non-decreasing near 0 and satisfies $f(0)=0$. This is due to \citet{Hahn1977}. However, $(\cL^2, \delta)$-Lipschitz seems to be the most tractable assumption for our purposes.
	  \end{remark}
	  \begin{definition}\label{def:CSmoment}
		    We say a process $Z$ has finite $p$-th $\cC(\cS)$-moment, if $\EE\big[ \Vert Z(s) \Vert^p_\infty \big]<\infty$.
	  \end{definition}
	  \begin{proposition}\label{prop:redundancy}
            Any $(\cL^p,\delta)-$Lipschitz process over a compact set $\cS$ has finite $p$-th $\cC(\cS)$-moment.
	  \end{proposition}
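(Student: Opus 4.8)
The plan is to bound the supremum norm of $Z$ by its value at a single reference point plus a deterministic multiple of $A$, and then take $p$-th moments; by Definition~\ref{def:CSmoment} the goal is exactly $\EE\big[\Vert Z\Vert_\infty^p\big]<\infty$. First I would record that the $\delta$-diameter of $\cS$ is finite. Since $\delta$ is a (semi)-metric on $\cS$ that is continuous with respect to the standard metric on $\R^D$ and $\cS\times\cS$ is compact, the map $(s,s')\mapsto\delta(s,s')$ attains its maximum, so $\Delta:=\sup_{s,s'\in\cS}\delta(s,s')<\infty$.

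Next, fix an arbitrary reference point $s_0\in\cS$. For every $s\in\cS$ the triangle inequality together with the Lipschitz bound \eqref{eq:LipAssump} gives $|Z(s)|\le |Z(s_0)|+|Z(s)-Z(s_0)|\le |Z(s_0)|+A\,\delta(s,s_0)\le |Z(s_0)|+A\Delta$. Taking the supremum over $s$, whose right-hand side no longer depends on $s$, yields the envelope
\[
\Vert Z\Vert_\infty=\sup_{s\in\cS}|Z(s)|\le |Z(s_0)|+A\Delta .
\]
I would point out that this crude bound already dispenses with the metric-entropy hypothesis in the definition of $(\cL^p,\delta)$-Lipschitz, which is only needed downstream for sample-path continuity and the CLT, and plays no role in the present moment estimate.

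It then remains to raise to the $p$-th power and integrate. For $p\ge 1$ I would apply Minkowski's inequality to obtain $\big(\EE[\Vert Z\Vert_\infty^p]\big)^{1/p}\le \big(\EE[|Z(s_0)|^p]\big)^{1/p}+\Delta\,\big(\EE[|A|^p]\big)^{1/p}$, while for $0<p<1$ the elementary subadditivity $(a+b)^p\le a^p+b^p$ serves the same purpose. The factor $\EE[|A|^p]$ is finite by the definition of $(\cL^p,\delta)$-Lipschitz and $\Delta<\infty$ by the first step, so finiteness of $\EE[\Vert Z\Vert_\infty^p]$ reduces to finiteness of $\EE[|Z(s_0)|^p]$ at the single point $s_0$.

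The one genuine point to secure is therefore the pointwise integrability $\EE[|Z(s_0)|^p]<\infty$, and I expect this to be the only nontrivial ingredient: it does not follow from the increment bound alone, since a spatially constant process equal to a non-integrable random variable satisfies \eqref{eq:LipAssump} with $A=0$ yet fails the conclusion. I would supply this input from the standing moment assumptions on the process, under which point evaluations of $Z$ possess the relevant $p$-th moment; with that in hand the displayed inequality closes the argument, the remainder being only the triangle inequality and compactness.
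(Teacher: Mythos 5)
Your proof is correct and follows essentially the same route as the paper's: both bound $\vert Z(s)\vert$ by $\vert Z(s')\vert + A\,\delta(s,s')$ for a fixed reference point, use compactness and continuity of $\delta$ to control $\max_{s}\delta(s,s')$, and then take $p$-th moments, the only cosmetic difference being that you invoke Minkowski's inequality where the paper uses the convexity bound $(a+b)^p\le 2^{p-1}(a^p+b^p)$. Your caveat that $\EE[\vert Z(s_0)\vert^p]<\infty$ at a single point does not follow from the increment bound alone is well taken --- the paper's proof simply asserts $\EE[\vert Z(s')\vert^p]<\infty$ without comment, so it rests on exactly the same tacit pointwise-moment assumption you flag.
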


	  \begin{remark}\label{rem:boundedmoment}
            Since any continuous Gaussian process satisfies the finite $p$-th $\cC(\cS)$-moment condition, cf. \cite{Landau1970}, it is possible to proof a reverse of Proposition \ref{prop:redundancy} for continuously differentiable Gaussian processes. Moreover, finite $p$-th $\cC(\cS)$-moment conditions are often assumed, when uniform consistency of estimates of the covariance function are required, e.g., \citet{Hall2006} and \citet{Li2010}.
	  \end{remark}
\FloatBarrier
    
    \section{Estimation of the Quantile}\label{sec:quantiles}
    
     This section describes different estimators for the quantiles $q_{\alpha,N}$ defined by equation \eqref{eq:FiniteNQuantile}. Especially, we propose using the Gaussian kinematic formula for $t$-processes (tGKF) as proven in \citet{Taylor2006}. Moreover, we describe different bootstrap estimators. 
     
    
	  \subsection{Estimation of the Quantile Using the tGKF}\label{scn:tGKFquantileEstimator}
		\subsubsection{The Gaussian Kinematic Formula for t-processes}
		A $t_{N-1}$-process $T$ over an index set $\cS$ is a stochastic process such that $T(s)$ is $t_{N-1}$-distributed for all $s\in \cS$. In order to match the setting in \cite{Taylor2006} we assume that the process is given by
		\begin{equation}
			    T(s) =  \frac{\sqrt{N} G_{N}(s) }{ \sqrt{\frac{1}{N-1}\sum_{n=1}^{N-1} G_n(s)^2} }\,,
		\end{equation}
		where $G_1,...,G_{N}\iid \cG$ are mean zero, variance one Gaussian processes.  Let us define $D^I\!\cG=\tfrac{\partial^{\vert I \vert} \cG}{\partial s_{I_1}...\partial s_{I_K} }$, where $K=\vert I \vert$ denotes the number of elements in the multi-index $I$. Then we require the following assumptions:
		\begin{enumerate}[align=left, leftmargin=1.6cm]
			\item[\textbf{(G1)}]  $\cG$ has almost surely $\cC^2-$sample paths.
			  \item[\textbf{(G2)}] The  joint distribution of $\Big(D^d\cG(s), D^{(d,l)}\cG(s)\Big)$ is nondegenerate for all $s\in \cS$ and $d,l=1,...,D$.
			  \item[\textbf{(G3)}]There is an $\epsilon >0$ such that $$\EE\!\left[ \big( D^{(d,l)}\cG(s) -D^{(d,l)}\cG(s')\big)^2 \right] \leq K \big\vert \log\Vert s-s' \Vert \big\vert^{-(1+\gamma)} $$ for all $d,l=1,...,D$ and for all $\vert s-t\vert <\epsilon$. Here $K>0$ and $\gamma>0$ are finite constants.
		\end{enumerate}
		\begin{remark}\label{rem:GKFassumptions}
		    Assumption \textbf{(G3)} is satisfied for any process $\cG$ having almost surely $\cC^3-$sample paths and all third derivatives have finite second $\cC(\cS)$-moment, see Definiton \ref{def:CSmoment}. In particular, this holds true for any Gaussian process with $\cC^3-$sample paths. For completeness the argument is carried out in more detail in the appendix.
		\end{remark}

		Under these assumptions the tGKF is an exact, analytical formula of the expectation of the Euler characteristic $\chi$ of the excursion sets $A(T,u)=\left\{ s\in \cS ~\vert~ T(s) >u  \right\}$ of $T$. This formula as proven in  \citet{Taylor2006} or  \citet{Adler2007} is
		\begin{equation} \label{eq:EEC}
		\mathbb{E}\left[ \chi\big( A(T,u)\big)  \right] =  L_0(\cS) \rho_0^{t_{N-1}} (u) + \sum_{d=1}^D L_d(\cS,\cG) \rho_d^{t_{N-1}}(u)\,,~D\in \N\,,
		\end{equation}
		where $\rho_d^{t_{N-1}}$, $d=1,...,D$, is the $d$-th Euler characteristic density of a $t_{N-1}$-process, which can be found for example in \citet{TaylorWorsley2007}. Moreover, $ L_d$  denotes the $d$-th Lipschitz killing curvature, which only depends on $\cG$ and the parameter space $\cS$. Note that $ L_0(\cS) = \chi(\cS)$.
		
		Equation \eqref{eq:EEC} is useful, since by the expected Euler characteristic heuristic (EECH) (see \cite{Taylor2005} ), we expect
		\begin{equation}\label{eq:EECH}
			    \frac{1}{2}\Prb\Big( \max_{s\in \cS} \vert T(s) \vert > u \Big) \leq \Prb\Big( \max_{s\in \cS} T(s) > u \Big) \approx \mathbb{E}\left[ \chi(u) \right]\,,
		\end{equation}
		to be a good approximation for large thresholds $u$. In the case that $\cS\subset\R$ this approximation actually is always from above. This is due to the fact that the Euler characteristic of one-dimensional sets is always non-negative and hence using the Markov-inequality we obtain
		\begin{equation}
			    \frac{1}{2}\Prb\Big( \max_{s\in \cS} \vert T(s) \vert > u \Big) \leq \Prb\Big( \chi\big( T(s) > u \big) \geq 1 \Big) \leq \mathbb{E}\left[ \chi(u) \right]\,.
		\end{equation}
		The same argument is heuristically valid for high enough thresholds in any dimension, since the excursion set will with high probabilty consist mostly of simply-connected sets. Thus, the EC of the excursion set will be positive with high probability. Therefore, once again we expect that the EEC is an approximation from above for the excursion probability.
		
		Notably, the tGKF Equation \eqref{eq:EEC} had a predecessor for the EEC of a Gaussian processes $\cG$. Remarkably, the only difference between these formulas is that the Euler characteristic densities are different, see \citet[p.315, (12.4.2)]{Adler2007}. Moreover, the approximation error when using the EEC instead of the excursion probabilities of the Gaussian process $\cG$ has been analytically quantified in \cite{Taylor2005}.

		  \subsubsection{The tGKF-estimator of $q_{\alpha,N}$}
		 Assume that we have consistent estimators $\hat  L_d(\cS,\cG)$ for the LKCs $ L_d(\cS,\cG)$ for all $d=1,...,D$ (we provide such estimators for $D = 1,2$ in Section \ref{sec:LKCestimation}). Then a natural estimator $\hat q_{\alpha,N}$ of the quantile $q_{\alpha,N}$ is the largest solution $u$ of
		  \begin{equation}\label{eq:ualpha}
			    \widehat{\mathrm{EEC}}_{t_{N-1}}(u)= L_0(\cS) \rho_0^{t_{N-1}} (u) + \sum_{d=1}^D\hat L_d(\cS,\cG) \rho_d^{t_{N-1}}(u) = \frac{\alpha}{2}\,.
		  \end{equation}
		  The following result will be used in the next section for the proof of the accuracy of the SCBs derived using the tGKF estimator $q_{\alpha,N}$.
		  \begin{theorem}\label{thm:consistencyquantile}
			    Assume that $\hat L^N_d(\cS,\cG)$ is a consistent estimator of $ L_d(\cS,\cG)$. Then $\hat q_{\alpha,N}$ given by \eqref{eq:ualpha} converges almost surely for $N$ tending to infinity to the largest solution $\tilde q_\alpha$ of
			    \begin{equation}\label{eq:ualphaGauss}
					\mathrm{EEC}_{\cG}(u)= L_0(\cS) \rho_0^{\cG}(u) + \sum_{d=1}^D L_d(\cS,\cG) \rho_d^{\cG}(u) = \frac{\alpha}{2}\,,
			    \end{equation}
			    where $\rho_D^{\cG}$ are the Euler characteristic densities of a Gaussian process, which can be found in \citet[p.315, (12.4.2)]{Adler2007}.
		  \end{theorem}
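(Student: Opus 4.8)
The plan is to decouple the two mechanisms that together force $\hat q_{\alpha,N}\to\tilde q_\alpha$: a purely analytic convergence of the Euler characteristic densities of the $t_{N-1}$-process to those of the limiting Gaussian field, and the stochastic consistency of the LKC estimators. Granting both, one gets uniform convergence of the random function $\widehat{\mathrm{EEC}}_{t_{N-1}}$ to the deterministic function $\mathrm{EEC}_{\cG}$ on a suitable half-line $[u_0,\infty)$, and convergence of the largest roots then follows from a transversality argument at $\tilde q_\alpha$. Throughout I would work on the probability-one event on which all the $\hat L_d^N(\cS,\cG)$ converge to $L_d(\cS,\cG)$, which is what lets the conclusion be almost sure.

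First I would establish the deterministic fact that, for each $d=0,\dots,D$, $\rho_d^{t_{N-1}}(u)\to\rho_d^{\cG}(u)$ as $N\to\infty$, uniformly for $u$ in any half-line $[u_0,\infty)$. Using the explicit closed forms of the $t_{N-1}$-EC densities in \citet{TaylorWorsley2007}, each $\rho_d^{t_{N-1}}$ is a polynomial in $u$ of degree $d-1$ (with $N$-dependent coefficients) multiplied by the factor $(1+u^2/(N-1))^{-(N-2)/2}$. As $N\to\infty$ this factor tends to $e^{-u^2/2}$ and the polynomial coefficients converge to those of the Hermite polynomial $H_{d-1}$, recovering the Gaussian density $\rho_d^{\cG}$ from \citet[p.315, (12.4.2)]{Adler2007}. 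The pointwise limit is routine; upgrading to uniform convergence on $[u_0,\infty)$ takes a little care, since for finite $N$ the factor decays only polynomially, at the rate $u^{-(N-2)}$, rather than like $e^{-u^2/2}$, but for $N>D+1$ it still vanishes at infinity, which suffices to control the tails.

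Next I would combine this with the assumed consistency $\hat L_d^N(\cS,\cG)\to L_d(\cS,\cG)$. Since the $L_d(\cS,\cG)$ and their estimators are finite, while the densities converge uniformly and stay bounded on $[u_0,\infty)$, a routine sum/product estimate gives
\begin{equation*}
    \sup_{u\geq u_0}\big\vert\, \widehat{\mathrm{EEC}}_{t_{N-1}}(u)-\mathrm{EEC}_{\cG}(u)\,\big\vert \xrightarrow{\;N\to\infty\;} 0\,.
\end{equation*}
From here I would pass to the roots. For large $u$ the sum $\mathrm{EEC}_{\cG}(u)$ is dominated by its top-dimensional term $L_D(\cS,\cG)\,\rho_D^{\cG}(u)$, which is eventually strictly positive and strictly decreasing; hence the largest solution $\tilde q_\alpha$ of $\mathrm{EEC}_{\cG}=\tfrac{\alpha}{2}$ is well defined and the level $\tfrac{\alpha}{2}$ is crossed transversally there. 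Choosing $M>\tilde q_\alpha$ with $\mathrm{EEC}_{\cG}<\tfrac{\alpha}{2}$ strictly to the right of $\tilde q_\alpha$ up to $M$, uniform convergence on the compact $[u_0,M]$ localizes any root of $\widehat{\mathrm{EEC}}_{t_{N-1}}$ in that range inside an arbitrary neighbourhood of $\tilde q_\alpha$ for $N$ large, and the transversal crossing guarantees such a root actually exists.

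The step I expect to be the main obstacle is ruling out \emph{spurious large roots}, i.e.\ showing that the largest root $\hat q_{\alpha,N}$ cannot escape to infinity as $N$ grows. Because the $t_{N-1}$-field EC densities have only polynomial (heavy) tails for finite $N$, uniform convergence to the Gaussian EEC does not persist all the way to $u=\infty$, so one cannot simply read off the behaviour of the tail of $\widehat{\mathrm{EEC}}_{t_{N-1}}$ from that of $\mathrm{EEC}_{\cG}$. What is needed is a tail bound on $\widehat{\mathrm{EEC}}_{t_{N-1}}(u)$ on $[M,\infty)$ that is uniform in $N$ (for $N$ beyond some $N_0>D+1$), obtained by comparing the factors $(1+u^2/(N-1))^{-(N-2)/2}$ across $N$ and exploiting that each term vanishes at infinity; this forces $\widehat{\mathrm{EEC}}_{t_{N-1}}<\tfrac{\alpha}{2}$ beyond $M$ for all large $N$, so that $\hat q_{\alpha,N}$ is pinned near $\tilde q_\alpha$ and $\hat q_{\alpha,N}\to\tilde q_\alpha$.
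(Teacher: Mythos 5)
Your proposal is correct and follows essentially the same route as the paper: uniform convergence of the estimated EEC curve to $\mathrm{EEC}_{\cG}$ (obtained from uniform convergence of the $t_{N-1}$ EC densities to the Gaussian ones together with consistency of the $\hat L_d^N$), followed by a root-localization argument that uses the eventual strict monotonicity of $\mathrm{EEC}_{\cG}$ at the largest root and rules out roots escaping to infinity. Your uniform-in-$N$ tail comparison of the factors $\left(1+u^2/(N-1)\right)^{-(N-2)/2}$ is exactly the monotonicity-in-$\nu$ observation the paper uses in Lemma \ref{lemma:ConvergenceQuantile}, and your transversality step is the content of Lemma \ref{lemma:inverse-delta-method}.
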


      \subsection{Estimation of the Quantile Using the Bootstrap}\label{scn:bootsest}

      As alternatives to the approach using the tGKF we discuss a non-parametric bootstrap-$t$ and a multiplier bootstrap estimator of the quantile $q_{\alpha,N}$ defined in Equation \eqref{eq:FiniteNQuantile}.

	    \subsubsection{Non-parametric bootstrap-$t$} Based on \citet{Degras2011} we review a bootstrap estimator of the quantile $q_{\alpha,N}$.  Assume that the estimators $s\mapsto\hat\eta_N(s)$ and $s\mapsto\hat\varsigma_N(s)$ are obtained from a sample $Y_1,...,Y_N\iid Y$ of random functions, then the \emph{non-parametric bootstrap-$t$} estimator of $q_{\alpha,N}$ is obtained as follows
	    \begin{enumerate}
			\item Resample from $Y_1,...,Y_N$ with replacement to produce a bootstrap sample $Y_{1}^*,...,Y_{N}^*$.
			\item Compute $\hat\eta^*_N$ and $\hat\varsigma_N^*$ using the sample $Y_{1}^*,...,Y_{N}^*$.
			\item Compute $T^* = \max_{s\in \cS} \tau_N \big\vert \hat\eta^*_N(s) - \hat\eta_N(s) \big\vert / \hat\varsigma_N^*(s) $.
			\item Repeat steps 1 to 3 many times to approximate the conditional law $\mathcal{L}^*=\mathcal{L}\big( T^*\, \vert~ Y_1,...,Y_N \big)$ and take the $(1-\alpha)\cdot100\%$ quantile of $\mathcal{L}^*$ to estimate  $q_{\alpha,N}$.
	    \end{enumerate}
	    \begin{remark}
	     Note that the variance in the denominator is also bootstrapped, which corresponds to the standard bootstrap-$t$ approach for confidence intervals, cf. \cite{Ciccio1996}. This is done in order to mimic the l.h.s. in \eqref{eq:RatioCLT}, and improves the small sample coverage. Results of this bootstrap variant will be discussed in our simulations.
	    \end{remark}

	    We expect that this estimator works well for large enough sample sizes. Although \cite{Degras2011} introduced it especially for small sample sizes, there is not much hope that it will perform well in this case, since it is well known that confidence bands for a finite dimensional parameter based on the bootstrap-$t$ have highly variable end points, cf., \citet[Section 3.3.3]{Good2005}. Evidence that this remains the case in the functional world will be given in our simulations in Section \ref{sec:Simulations}.
	    
	    \subsubsection{Multiplier-$t$ Bootstrap}
	    The second bootstrap method, which we introduce, builds on residuals and a version of the multiplier (or Wild) bootstrap (e.g., \citet{Mammen1993}) designed for the maximum of sums of $N$ independent random variables in high dimensions as discussed in detail by \citet{Chernozhukov2013}. Briefly, the approach in \citet{Chernozhukov2013} is as follows. Let $X_1,...,X_N$ be independent random vectors in $\R^K$, $N,K\in\N$ with $\EE\big[X_n\big]=0$ and finite covariance $\EE\big[X_nX_n^T\big]$ for all $n\in\{1,...,N\}$. Define $X_{n}^T=\big(X_{n1},...,X_{nK}\big)$ and assume there are $c,C\in\R_{>0}$ such that $c<\EE\big[ X_{nk}^2 \big]<C$ for all $n\in\{1,...,N\}$ and all $k\in\{1,...,K\}$. Under these assumptions it is shown in \citet[Theorem 3.1]{Chernozhukov2013} that the quantiles of the distribution of
	    \begin{equation*}
		    \max_{k\in\{1,...,K\}} \frac{1}{\sqrt{N}} \sum_{n=1}^N X_{nk}
	    \end{equation*}
	    can be asymptotically consistently estimated by the quantiles of the multiplier bootstrap i.e., by the distribution of
	    \begin{equation*}
		      \max_{k\in\{1,...,K\}} \frac{1}{\sqrt{N}} \sum_{n=1}^N g_nX_{nk}
	    \end{equation*}
	    with multipliers $g_1,...,g_N\iid\cN(0,1)$ given the data $X_1,...,X_N$, even if $K\gg N$.
	    
	    We adapt this approach to the estimation of $q_{\alpha,N}$ for functional parameters. Therefore we again assume that the estimators $s\mapsto\hat\eta_N(s)$ and $s\mapsto\hat\varsigma_N(s)$ are obtained from an i.i.d. sample $Y_1,...,Y_N$ of random functions. However, here we also need to assume that we have residuals $R_n^N$ for $n=1,...,N$ satisfying
	    $
            R_n^N/\varsigma \xRightarrow{N\rightarrow\infty} \cG(0,\fr)
	    $
        and mutual independence for $N$ tending to infinity. For example for the signal-plus-noise model, where $\hat\eta_N=\hat\mu_N$ and $\hat\varsigma_N=\hat\sigma_N$ are the pointwise sample mean and the pointwise sample standard deviation, the residuals  $R_n^N=\sqrt{\tfrac{N}{N-1}}(Y_n-\hat\mu_N)$ do satisfy these conditions, if the error process $Z$ is $(\cL^2,\delta)$-Lipschitz and has finite second $\cC(\cS)$-moment.
        Thus, using these assumptions we obtain that
	    \begin{equation*}
		    \max_{ s \in \cS}  \sqrt{N}\left\vert  \frac{\hat\eta_N(s) - \eta(s)}{\hat\varsigma_N(s)} \right\vert ~~~~\text{ and }~~~~ \max_{ s \in \cS} \frac{1}{\sqrt{N}} \left\vert \sum_{n=1}^N \tfrac{R_n^N\!(s)}{\hat\varsigma_N(s)} \right\vert
	    \end{equation*}
	    have approximately the same distribution. Algorithmitically, our proposed multiplier bootstrap is
	    \begin{enumerate}
		      \item Compute residuals $R_1^N,...,R_N^N$ and multipliers $g_1,...,g_N\overset{\text{i.i.d.}}{\sim}g$ with $\EE[g]=0$ and $\var[g]=1$
		      \item Estimate $\hat\varsigma_N^{*}(s)$ from $g_1Y_1(s) ,...,g_NY_N(s)$.
		      \item Compute $T^*(s) = \frac{1}{\sqrt{N}} \sum_{n=1}^N g_n\frac{R_n^N\!(s)}{\hat\varsigma_N^*(s)}$.
		      \item Repeat steps 1 to 3 many times to approximate the conditional law $\mathcal{L}^*=\mathcal{L}\big( T^*\, \vert~ Y_1,...,Y_N \big)$ and take the $(1-\alpha)\cdot100\%$ quantile of $\mathcal{L}^*$ to estimate  $q_{\alpha,N}$.
	    \end{enumerate}
	    In our simulations we use Gaussian multipliers as proposed in \cite{Chernozhukov2013}, but find that Rademacher multipliers as used in regression models with heteroskedastic noise, e.g. \cite{Davidson2008}, perform much better for small sample sizes.
	    \begin{remark}
		    Again the choice to compute bootstrapped versions of $\hat\varsigma_N$ mimics the true distribution of the maximum of the random process on the l.h.s of \eqref{eq:RatioCLT} for finite $N$ better than just applying the multipliers to $R_n^N/\hat\varsigma_N$.
	    \end{remark}

      \section{Asymptotic Covering Rates}\label{sec:Asymptotics}
        \subsection{Asymptotic SCBs for Functional Parameters}
     This section discusses the accuracy of the SCBs derived using the tGKF. Since the expected Euler characteristic of the excursion sets are only approximations of the excursion probabilities, there is no hope to prove that the covering of these confidence bands is exact. Especially, if $\alpha$ is large the approximation fails badly and will usually lead to confidence bands that are too wide. However, for values of $\alpha<0.1$ typically used for confidence bands, the EEC approximation works astonishingly well. Theoretically, this has been made precise for Gaussian processes in Theorem 4.3 of \cite{Taylor2005}, which is the main ingredient in the proof of the next result. Additionally, it relies on the fCLT \eqref{eq:RatioCLT} and the consistency of $q_{\alpha,N}$ for $q_\alpha$ proved in Theorem \ref{thm:consistencyquantile}.
      \begin{theorem}\label{thm:AsymptoticValidity}
		Assume \textbf{(E1-2)} and assume that the limiting Gaussian process $\mathcal{G}(0, \fr)$ satisfies \textbf{(G1-3)}. Moreover, let $\hat  L_{d}^N$ be a sequence of consistent estimators of $ L_{d}$ for $d=1,...,D$. Then there exists an $\alpha'\in(0,1)$ such that for all $\alpha\leq\alpha'$ we have that the SCBs defined in Equation \eqref{eq:SCBconstruction} fullfill
		\begin{equation*}
			\lim_{N\rightarrow\infty}\Big\vert 1-\alpha-\Prb\big( ~\forall s\in \cS:~\eta(s)\in SCB(s,\hat q_{\alpha,N} ) \big) \Big\vert \leq e^{-\big(\tfrac{1}{2}+\tfrac{1}{2\sigma_c^2}\big) \tilde q_\alpha^2 } < e^{- \tfrac{\tilde q_\alpha^2}{2} }\,,
		  \end{equation*}
		  where $\sigma_c^2$ is the critical variance of an associated process of $\mathcal{G}(0, \fr)$, $\hat q_{\alpha,N}$ is the quantile estimated using equation \eqref{eq:ualpha} and $\tilde q_\alpha$ is defined in Theorem \ref{thm:consistencyquantile}.
      \end{theorem}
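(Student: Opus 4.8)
The plan is to split the covering error into a \emph{sampling} part that vanishes as $N\to\infty$ thanks to the fCLT, and a deterministic \emph{EEC-approximation} part controlled by Theorem 4.3 of \citet{Taylor2005}. First I would rewrite the covering probability using the definition \eqref{eq:SCBconstruction} of the bands:
\[
\Prb\big(\forall s\in\cS:\eta(s)\in SCB(s,\hat q_{\alpha,N})\big)=\Prb\big(M_N\le\hat q_{\alpha,N}\big),\qquad M_N:=\max_{s\in\cS}\tau_N\Big\vert\tfrac{\hat\eta_N(s)-\eta(s)}{\hat\varsigma_N(s)}\Big\vert .
\]
By \eqref{eq:RatioCLT} the process $\tau_N(\hat\eta_N-\eta)/\hat\varsigma_N$ converges weakly in $\cC(\cS)$ to $\cG(0,\fr)$, and since $f\mapsto\Vert f\Vert_\infty$ is continuous on $(\cC(\cS),\Vert\cdot\Vert_\infty)$ the continuous mapping theorem gives $M_N\Rightarrow M:=\max_{s\in\cS}\vert\cG(0,\fr)(s)\vert$. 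On the other hand, Theorem \ref{thm:consistencyquantile} yields $\hat q_{\alpha,N}\to\tilde q_\alpha$ almost surely, hence in probability, to a constant.

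Next I would combine the two convergences. Because $\hat q_{\alpha,N}$ converges in probability to the constant $\tilde q_\alpha$, the functional Slutzky lemma gives joint convergence $(M_N,\hat q_{\alpha,N})\Rightarrow(M,\tilde q_\alpha)$ and thus $M_N-\hat q_{\alpha,N}\Rightarrow M-\tilde q_\alpha$. Provided $\tilde q_\alpha$ is a continuity point of the law of $M$, the Portmanteau theorem then gives $\lim_{N\to\infty}\Prb(M_N\le\hat q_{\alpha,N})=\Prb(M\le\tilde q_\alpha)$, so that $\lim_N\vert 1-\alpha-\text{(cover)}\vert=\vert\Prb(M>\tilde q_\alpha)-\alpha\vert$. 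The required continuity of the law of $M$ at $\tilde q_\alpha>0$ follows from non-degeneracy: the supremum of a non-degenerate, a.s.\ continuous Gaussian process has no atoms away from its essential infimum, and \textbf{(G2)} guarantees the non-degeneracy.

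It remains to bound $\vert\Prb(M>\tilde q_\alpha)-\alpha\vert$, which is where the GKF enters. By construction \eqref{eq:ualphaGauss}, $\tilde q_\alpha$ solves $\mathrm{EEC}_\cG(\tilde q_\alpha)=\alpha/2$, i.e.\ $\alpha=2\,\mathrm{EEC}_\cG(\tilde q_\alpha)$. Writing $u=\tilde q_\alpha$ and using $-\cG\overset{d}{=}\cG$, inclusion--exclusion gives
\[
\Prb(M>u)=2\,\Prb\big(\max_{s}\cG(s)>u\big)-\Prb\big(\max_s\cG(s)>u,\ \min_s\cG(s)<-u\big).
\]
I would then invoke Theorem 4.3 of \citet{Taylor2005}, whose hypotheses follow from \textbf{(G1--3)} (cf.\ Remark \ref{rem:GKFassumptions}), to bound the one-sided approximation error $\vert\Prb(\max_s\cG(s)>u)-\mathrm{EEC}_\cG(u)\vert$ by a multiple of $e^{-(\frac12+\frac1{2\sigma_c^2})u^2}$ for all $u$ beyond some threshold $u_0$, where $\sigma_c^2$ is the critical variance of the associated process. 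Combining the last two displays,
\[
\big\vert\Prb(M>u)-\alpha\big\vert\le 2\big\vert\Prb(\max_s\cG>u)-\mathrm{EEC}_\cG(u)\big\vert+\Prb\big(\max_s\cG>u,\ \min_s\cG<-u\big),
\]
where the last term is of strictly smaller order than the TTA bound and is absorbed into it. Since $\tilde q_\alpha\to\infty$ as $\alpha\downarrow0$, I choose $\alpha'$ so small that $\tilde q_{\alpha'}\ge u_0$; then for every $\alpha\le\alpha'$ the bound $e^{-(\frac12+\frac1{2\sigma_c^2})\tilde q_\alpha^2}$ applies, and $\frac1{2\sigma_c^2}>0$ delivers the claimed strict inequality $e^{-(\frac12+\frac1{2\sigma_c^2})\tilde q_\alpha^2}<e^{-\tilde q_\alpha^2/2}$.

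The main obstacle is the final paragraph: pinning down the exact exponent requires applying the TTA bound precisely -- verifying its hypotheses from \textbf{(G1--3)} and, crucially, arguing that the two-sided correction $\Prb(\max_s\cG>u,\min_s\cG<-u)$ does not spoil the clean exponent $\tfrac12+\tfrac1{2\sigma_c^2}$. A secondary subtlety is justifying the continuity-point assumption in the Slutzky step from non-degeneracy rather than assuming it.
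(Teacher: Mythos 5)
Your first two steps match the paper's proof: the decomposition into a sampling error (killed by the fCLT \textbf{(E1--2)} together with the a.s.\ convergence $\hat q_{\alpha,N}\to\tilde q_\alpha$ from Theorem \ref{thm:consistencyquantile}) plus a deterministic EEC-approximation error, and the reduction to bounding $\vert\Prb(\max_s\vert\cG(s)\vert>\tilde q_\alpha)-\alpha\vert$, are exactly what the paper does. The gap is in your final step. You handle the two-sided maximum by inclusion--exclusion and then assert that the cross term $\Prb(\max_s\cG>u,\ \min_s\cG<-u)$ ``is of strictly smaller order than the TTA bound and is absorbed into it.'' This is not justified and is generally false. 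If the correlation function $\fr(s,s')$ attains values close to $-1$, the cross term decays only like $e^{-u^2/(1-\rho_{\min})}$ with $\rho_{\min}=\min_{s,s'}\fr(s,s')$, which can be arbitrarily close to the order $e^{-u^2/2}$ of the one-sided excursion probability itself; and even in benign cases it is of order roughly $e^{-u^2}$, which is \emph{larger} than $e^{-(\frac12+\frac{1}{2\sigma_c^2})u^2}$ whenever $\sigma_c^2<1$. Moreover, ``absorbing'' a term can at best be done up to a multiplicative constant, whereas the theorem asserts the bound $e^{-(\frac12+\frac{1}{2\sigma_c^2})\tilde q_\alpha^2}$ with constant one.

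The paper avoids this entirely with a doubling trick you should adopt: define the Gaussian process $\mathcal{Z}(s,v)=v\cdot\cG(s)$ on the enlarged parameter space $\mathcal{T}=\cS\times S^0$ with $S^0=\{1,-1\}$. Then $\max_{(s,v)\in\mathcal{T}}\mathcal{Z}(s,v)=\max_{s\in\cS}\vert\cG(s)\vert$ exactly, so there is no cross term to control, and $L_d(\mathcal{T},\mathcal{Z})=L_0(S^0)L_d(\cS,\cG)=2L_d(\cS,\cG)$ gives $\mathrm{EEC}_{\mathcal{Z}}(u)=2\,\mathrm{EEC}_{\cG}(u)$, which matches the definition of $\tilde q_\alpha$ via $2\,\mathrm{EEC}_{\cG}(\tilde q_\alpha)=\alpha$. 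Applying Theorem 4.3 of \cite{Taylor2005} to $\mathcal{Z}$ (not to $\cG$) then yields the clean bound $e^{-(\frac12+\frac{1}{2\sigma_c^2})\tilde q_\alpha^2}$ for $\alpha\leq\alpha'$, where $\sigma_c^2$ is the critical variance of the process associated with $\mathcal{Z}$ --- which is also why the theorem statement speaks of an \emph{associated} process of $\cG(0,\fr)$ rather than of $\cG$ itself. Your Slutzky/Portmanteau handling of the joint convergence $(M_N,\hat q_{\alpha,N})$ is fine (and somewhat more explicit than the paper's), but without the doubling device the stated exponent cannot be recovered from your inclusion--exclusion decomposition.
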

      Typically, in our simulations we have that, for $\alpha=0.05$,  the quantile $\tilde q_\alpha$ is about $3$ leading to an upper bound of $\approx0.011$, if we use the weaker bound without the critical variance.

	\subsection{Asymptotic SCBs for the Signal-Plus-Noise Model}
	As an immediate application of Theorem \ref{thm:AsymptoticValidity} we derive now SCBs for the population mean and the difference of population means in one and two sample scenarios of the signal-plus-noise model introduced in Section \ref{sec:ModelAndAssumptions}. The derivation of consistent estimators for the LKCs will be postponed to the next section.
      
	\begin{theorem}[Asymptotic SCBs for Signal-Plus-Noise Model]\label{thm:GenericAsymptoticCBs}
		  Let $Y_1,...,Y_N\iid Y$ be a sample of model \eqref{eq:ContModel} and assume $Z$ is an $(\cL^2,\delta)-$Lipschitz process.  Define $\textbf{Y}(s)=\big( Y_1(s),...,Y_N(s) \big)$.
		  \begin{enumerate}
			\item[(i)] Then the estimators
			\begin{equation}
					  \hat\mu_N(s) = \overline{\textbf{Y}(s)} = \frac{1}{N}\sum_{n=1}^N Y(s)\,, ~~ \hat\sigma_N^2(s) = \widehat{\rm var}_N\!\left[\!\,\textbf{Y}(s)\,\right] = \frac{1}{N-1} \sum_{n=1}^N \big( Y_n(s) - \bar Y(s) \big)^2\,,
			\end{equation}
				    fullfill the conditions \textbf{(E1-2)} with $\tau_N=\sqrt{N}$, $\eta=\mu$, $\varsigma=\sigma$ and $\fr=\fc$.
			 \item[(ii)] If there are consistent estimators of the LKCs $ L_d$ and $Z$ has $\cC^3$-sample paths and all partial derivatives up to order $3$ of $Z$ are $(\cL^2,\delta)-$Lipschitz processes with finite $\cC(\cS)$-variances and $\cG(0,\fc)$ fullfills the non-degeneracy condition \textbf{(G2)}, then the accuracy result of Theorem \ref{thm:AsymptoticValidity} holds true for the SCBs
			  \begin{equation*}
			    SCB(s,\hat q_{\alpha,N} ) = \hat\mu_N(s) \pm \hat q_{\alpha,N}\frac{\hat\sigma_N(s)}{\sqrt{N}}
			  \end{equation*}
			  with $\hat q_{\alpha,N}$ of Theorem \ref{thm:consistencyquantile}.
		  \end{enumerate}
	\end{theorem}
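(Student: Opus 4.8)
The plan is to prove the two parts by different means: part (i) is a direct verification of the abstract hypotheses \textbf{(E1-2)} for the sample mean and sample standard deviation, whereas part (ii) is purely an application of Theorem \ref{thm:AsymptoticValidity} whose only non-trivial input is the regularity \textbf{(G1-3)} of the Gaussian limit $\cG(0,\fc)$. For \textbf{(E1)} I would first exploit the algebraic structure of model \eqref{eq:ContModel}: writing $Y_n(s)=\mu(s)+\sigma(s)Z_n(s)$ gives $\hat\mu_N(s)-\mu(s)=\sigma(s)\bar Z_N(s)$ with $\bar Z_N=\tfrac1N\sum_{n=1}^N Z_n$, so that the factors of $\sigma$ cancel exactly and
\begin{equation*}
\tau_N\frac{\hat\mu_N-\mu}{\sigma}=\sqrt{N}\,\bar Z_N=\frac{1}{\sqrt N}\sum_{n=1}^N Z_n\,.
\end{equation*}
Thus \textbf{(E1)} reduces to a CLT in $\cC(\cS)$ for the i.i.d.\ sum of the $Z_n$. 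Since $Z$ is assumed $(\cL^2,\delta)$-Lipschitz, this is exactly the setting of the $\cC(\cS)$-CLT of \cite{Jain1975} and \citet[Section 10.1]{Ledoux2013} quoted in the Remark following the definition of $(\cL^2,\delta)$-Lipschitz; the limit is mean-zero Gaussian with the covariance of $Z$, namely $\fc$, and the normalisation $\fc(s,s)=1$ gives exactly $\fr=\fc$.

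For \textbf{(E2)} I would again substitute the model to obtain $\hat\sigma_N^2(s)=\sigma^2(s)\cdot\tfrac{N}{N-1}\big(\tfrac1N\sum_{n=1}^N Z_n(s)^2-\bar Z_N(s)^2\big)$ and show that the parenthesised empirical variance of the $Z_n$ converges uniformly and almost surely to $\fc(s,s)=1$. Since $Z$ is $(\cL^2,\delta)$-Lipschitz it has almost surely continuous paths and, by Proposition \ref{prop:redundancy}, finite second $\cC(\cS)$-moment, so $Z^2$ is an integrable random element of the separable Banach space $\cC(\cS)$ with $\EE\Vert Z^2\Vert_\infty=\EE\Vert Z\Vert_\infty^2<\infty$. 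The Banach-space strong law of large numbers then yields $\tfrac1N\sum_n Z_n^2\to\EE[Z(\cdot)^2]\equiv1$ and $\bar Z_N\to0$ uniformly almost surely, whence the bracketed term tends to $1$. Continuity of $\sigma$ on the compact $\cS$ finally gives $\Vert\hat\sigma_N-\sigma\Vert_\infty\to0$ almost surely, which is \textbf{(E2)}.

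For part (ii) the strategy is to check the hypotheses of Theorem \ref{thm:AsymptoticValidity}. Conditions \textbf{(E1-2)} are supplied by part (i), consistent LKC estimators $\hat L_d^N$ are assumed, and the non-degeneracy \textbf{(G2)} for $\cG(0,\fc)$ is assumed outright. It therefore remains to verify \textbf{(G1)} and \textbf{(G3)}, and by Remark \ref{rem:GKFassumptions} both follow once I establish that $\cG(0,\fc)$ has almost surely $\cC^3$ sample paths (a Gaussian process with $\cC^3$ paths automatically has third derivatives with finite second $\cC(\cS)$-moment, which gives \textbf{(G3)}, and $\cC^3\subset\cC^2$ gives \textbf{(G1)}). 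The smoothness of the limit I would read off from the smoothness of $Z$: under the hypothesis every partial derivative $D^IZ$ with $\vert I\vert\le3$ is $(\cL^2,\delta)$-Lipschitz with finite $\cC(\cS)$-variance, so the $\cC(\cS)$-CLT applies jointly to the vector $\big(D^IZ_n\big)_{\vert I\vert\le3}$, giving weak convergence of $\big(N^{-1/2}\sum_n D^IZ_n\big)_{\vert I\vert\le3}$ in the product space to a jointly Gaussian limit whose zeroth component is $\cG(0,\fc)$.

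The main obstacle is the identification step: I must argue that the weak limit of $N^{-1/2}\sum_n D^IZ_n$ is genuinely the sample-path derivative $D^I\cG(0,\fc)$ of the limit process, i.e.\ that the $\cC(\cS)$-CLT commutes with differentiation. I would handle this by passing to a Skorokhod representation on a common probability space, along which $N^{-1/2}\sum_n Z_n$ and each derivative sum converge uniformly; since uniform convergence of functions together with uniform convergence of their derivatives forces the limit of the derivatives to equal the derivative of the limit, the limit process inherits continuous derivatives up to order $3$, i.e.\ has $\cC^3$ paths. Feeding this into Remark \ref{rem:GKFassumptions} verifies \textbf{(G1)} and \textbf{(G3)}, so all hypotheses of Theorem \ref{thm:AsymptoticValidity} hold and the stated accuracy bound applies to the SCBs $\hat\mu_N(s)\pm\hat q_{\alpha,N}\hat\sigma_N(s)/\sqrt N$.
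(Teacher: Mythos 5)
Your proof is correct, and part (i) is essentially the paper's argument (the paper likewise invokes the Jain--Marcus CLT in $\cC(\cS)$ for \textbf{(E1)}; for \textbf{(E2)} it cites the uniform strong law of Lemma \ref{Lem:UniformConvergence}, proved via stochastic equicontinuity, where you instead use the Banach-space SLLN applied to $Z_n^2$ in $\cC(\cS)$ --- an equivalent and if anything more elementary tool, and your passage from $\hat\sigma_N^2$ to $\hat\sigma_N$ is harmless since $\vert\sqrt a-\sqrt b\vert\leq\sqrt{\vert a-b\vert}$). Part (ii) is where you genuinely diverge. The paper establishes the $\cC^3$-sample-path property \textbf{(G1)} of the limit $\cG(0,\fc)$ analytically, via Lemma \ref{lemma:KLexpansion}: the $(\cL^2,\delta)$-Lipschitz derivatives of $Z$ force the mixed partials $D^I_sD^{I'}_{s'}\fc$ to exist and be continuous, and then regularity of the eigenfunctions of the covariance operator yields an almost surely uniformly and absolutely convergent Karhunen--Lo\`eve expansion that can be differentiated termwise. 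You instead run a joint functional CLT for the vector of derivative processes $(D^IZ_n)_{\vert I\vert\leq 3}$, pass to a Skorokhod coupling, and use the classical interchange of uniform limits and differentiation to conclude that the limit of $N^{-1/2}\sum_n Z_n$ has $\cC^3$ paths. Both routes are sound (both in fact only produce a suitably smooth \emph{version} of the limit, which suffices since \textbf{(G1-3)} and the excursion probabilities are distributional). The paper's KL route buys reusable structure --- the same expansion is what drives Lemma \ref{lemma:G2assumption} for \textbf{(G2)} and Proposition \ref{prop:ScaleSCBsufficientCond} --- at the cost of importing eigenfunction regularity theory; your route is more self-contained but requires you to justify tightness of the derivative sums (each $D^IZ$ being $(\cL^2,\delta)$-Lipschitz does supply this coordinatewise, and joint convergence follows from the multivariate CLT for the finite-dimensional distributions). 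The final reduction of \textbf{(G3)} to $\cC^3$ paths via Remark \ref{rem:GKFassumptions} is identical in both arguments.
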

	\begin{remark}
	    A simple condition on $Z$ to ensure that  $\cG(0,\fc)$ fulfills the non-degeneracy condition \textbf{(G2)} is that for all $d,l\in\{1,...,D\}$ we have that $\cov\!\left[ \left( D^lZ(s), D^{(d,l)} Z(s)\right) \right]$ has full rank for all $s\in\cS$. A proof is provided in Lemma \ref{lemma:G2assumption} in the appendix.
	\end{remark}

	\begin{theorem}[Asymptotic SCBs for Difference of Means of Two Signal-Plus-Noise Models]\label{thm:GenericAsymptoticCBdiffs}
		  Let $Y_1,...,Y_N\iid Y$ and $X_1,...,X_M\iid X$ be independent samples, where
		  \begin{equation}
		      Y(s) = \mu_Y(s) + \sigma_{Y}(s) Z_Y(s) ~ ~ ~ \text{ and } ~ ~ ~ X(s) = \mu_X(s) + \sigma_{X}(s) Z_X(s)\,,
		  \end{equation}
		  with $Z_Y, Z_X$ both $(\cL^2,\delta)-$Lipschitz processes and assume that $c=\lim_{N,M\rightarrow\infty}N/M$. Then
		  \begin{enumerate}
		   \item[(i)] Condition \textbf{(E1)} is satisfied, i.e. 
				\begin{equation*}
					    \frac{ \sqrt{N+M-2} \big( \overline{\textbf{Y}} -  \overline{\textbf{X}} - \mu_Y + \mu_X \big) }{ \sqrt{ (1+c^{-1})\hat\sigma_N(\textbf{Y})^2 + (1+c)\hat\sigma_N(\textbf{X})^2 }} \xRightarrow{N,M\rightarrow\infty}  \mathcal{G} = \frac{ \sqrt{1+c^{-1}}\sigma_{Y} \cG_Y -  \sqrt{1+c}\sigma_{X}\cG_X}{\sqrt{(1+c^{-1})\sigma_{Y}^2 + (1+c)\sigma_{X}^2}}\,,
				\end{equation*}
				where $G_Y, G_X$ are Gaussian processes with the same covariance structures as $X$ and $Y$ and the denominator converges uniformly almost surely, i.e. condition \textbf{(E2)} is satisfied.
				\item[(ii)] If there are consistent estimators of the LKCs $L_d$ of $~\cG$ and $Z_X, Z_Y$ have $\cC^3$-sample paths, fullfill the non-degeneracy condition \textbf{(G2)} and all their partial derivatives are $(\cL^2,\delta)-$Lipschitz processes with finite $\cC(\cS)$-variances, then the accuracy result of Theorem \ref{thm:AsymptoticValidity} holds true for the SCBs
			  \begin{equation*}
			    SCB(s,\hat q_{\alpha,N} ) = \hat\mu_N(s) \pm \hat q_{\alpha,N}\frac{ \sqrt{(1+c^{-1})\sigma_{Y}^2 + (1+c)\sigma_{X}^2} }{\sqrt{N+M}}
			  \end{equation*}
			  with $\hat q_{\alpha,N}$ of Theorem \ref{thm:consistencyquantile}.
		  \end{enumerate}
	\end{theorem}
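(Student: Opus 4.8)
The plan is to deduce both claims from the one-sample machinery already assembled: part (i) verifies the abstract hypotheses \textbf{(E1)} and \textbf{(E2)} for the pooled two-sample statistic, and part (ii) checks that the resulting Gaussian limit $\cG$ meets the regularity conditions \textbf{(G1-3)}, after which Theorem \ref{thm:AsymptoticValidity} applies verbatim. Throughout I treat each sample by the single-sample argument of Theorem \ref{thm:GenericAsymptoticCBs} and then glue the pieces together using the independence of the $Y$- and $X$-samples.

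For part (i), I would first apply Theorem \ref{thm:GenericAsymptoticCBs}(i) separately to each sample: since $Z_Y$ and $Z_X$ are $(\cL^2,\delta)$-Lipschitz, the functional CLT in $\cC(\cS)$ gives
\[
\sqrt{N}\,\frac{\overline{\textbf{Y}}-\mu_Y}{\sigma_Y}\xRightarrow{N\to\infty}\cG_Y\,,\qquad \sqrt{M}\,\frac{\overline{\textbf{X}}-\mu_X}{\sigma_X}\xRightarrow{M\to\infty}\cG_X\,,
\]
with $\cG_Y,\cG_X$ the centred Gaussian limits carrying the covariance structures of $Z_Y$ and $Z_X$. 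By independence of the two samples this convergence holds jointly in $\cC(\cS)\times\cC(\cS)$, with $\cG_Y$ and $\cG_X$ independent. Writing the numerator as
\[
\sqrt{\tfrac{N+M-2}{N}}\,\sqrt{N}\big(\overline{\textbf{Y}}-\mu_Y\big)-\sqrt{\tfrac{N+M-2}{M}}\,\sqrt{M}\big(\overline{\textbf{X}}-\mu_X\big)
\]
and using $c=\lim N/M$, so that $\sqrt{(N+M-2)/N}\to\sqrt{1+c^{-1}}$ and $\sqrt{(N+M-2)/M}\to\sqrt{1+c}$, the continuous mapping theorem yields weak convergence of the numerator to $\sqrt{1+c^{-1}}\,\sigma_Y\cG_Y-\sqrt{1+c}\,\sigma_X\cG_X$. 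For the denominator I would invoke the uniform almost-sure consistency of each sample variance (the \textbf{(E2)} part of Theorem \ref{thm:GenericAsymptoticCBs}(i)), giving $\hat\sigma_N(\textbf{Y})^2\to\sigma_Y^2$ and $\hat\sigma_N(\textbf{X})^2\to\sigma_X^2$ uniformly, hence uniform almost-sure convergence of the full denominator to $\sqrt{(1+c^{-1})\sigma_Y^2+(1+c)\sigma_X^2}$; this is exactly \textbf{(E2)} for the pooled scale. The functional Slutzky lemma then delivers the stated limit $\cG$, and a one-line computation using the unit-variance normalisation of the two error covariances and the independence of $\cG_Y,\cG_X$ shows $\var[\cG(s)]=1$, so the limit has the unit-diagonal covariance $\fr$ required in \textbf{(E1)}.

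For part (ii), it remains to verify \textbf{(G1-3)} for $\cG$. The limits $\cG_Y,\cG_X$ share the covariance, hence the sample-path regularity, of $Z_Y,Z_X$; the hypotheses that $Z_Y,Z_X$ have $\cC^3$-sample paths with all derivatives $(\cL^2,\delta)$-Lipschitz of finite $\cC(\cS)$-variance give \textbf{(G1)} and, via Remark \ref{rem:GKFassumptions}, \textbf{(G3)} for each of $\cG_Y,\cG_X$, while \textbf{(G2)} is assumed directly. Since $\cG$ is the pointwise combination of the independent fields $\cG_Y,\cG_X$ against the smooth, strictly positive deterministic weights built from $\sigma_Y,\sigma_X$ and the normalising factor $\big((1+c^{-1})\sigma_Y^2+(1+c)\sigma_X^2\big)^{1/2}$, it inherits $\cC^2$-sample paths and the logarithmic second-derivative bound \textbf{(G3)}, and non-degeneracy \textbf{(G2)} of $\cG$ follows from that of $\cG_Y$ and $\cG_X$ together with their independence. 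With the assumed consistent LKC estimators $\hat L_d^N$, every hypothesis of Theorem \ref{thm:AsymptoticValidity} is then met, which yields the claimed accuracy bound for the two-sample SCBs.

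The main obstacle is the regularity bookkeeping in part (ii): unlike the one-sample case, where dividing by $\sigma$ cancels it and leaves the clean standardised field $\cG(0,\fc)$, here the pooled standardisation does not eliminate $\sigma_Y,\sigma_X$, so $\cG$ is a genuine $\sigma$-weighted mixture of two independent Gaussian fields. One must therefore check that multiplying by, and dividing by, these deterministic coefficients preserves \textbf{(G1)} and the delicate logarithmic modulus bound \textbf{(G3)} on the second derivatives; this requires the weights to be smooth enough (arguably $\cC^2$) and bounded away from zero, the latter guaranteed by $\sigma_Y,\sigma_X>0$ on the compact $\cS$, and the non-degeneracy of the combined covariance must be extracted from the independence of $\cG_Y$ and $\cG_X$. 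The CLT and Slutzky steps of part (i) are, by contrast, a routine two-sample adaptation of the single-sample argument, modulo the bookkeeping of the scaling factors $\sqrt{1+c^{-1}}$ and $\sqrt{1+c}$.
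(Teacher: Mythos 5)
Your proposal is correct and follows exactly the route the paper intends: the paper omits this proof entirely, stating only that it is ``almost identical'' to the proof of Theorem \ref{thm:GenericAsymptoticCBs}, and your argument is precisely that two-sample adaptation (marginal fCLTs from the $(\cL^2,\delta)$-Lipschitz assumption, joint convergence via independence, the $\sqrt{1+c^{-1}}$/$\sqrt{1+c}$ bookkeeping, Slutzky for the pooled denominator, and Lemma \ref{lemma:KLexpansion} plus Remark \ref{rem:GKFassumptions} for \textbf{(G1-3)}). Your closing observation that the $\sigma$-weighted mixture does not reduce to the clean standardised field of the one-sample case, so that the regularity of $\sigma_Y,\sigma_X$ genuinely enters \textbf{(G1)} and \textbf{(G3)}, is a real subtlety that the paper's one-line proof glosses over.
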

	
	\section{Estimation of the LKCs for Signal-Plus-Noise Models}\label{sec:LKCestimation}
	We turn now to consistent estimators of the LKCs. However, we treat only the case where $Z$ is a process defined over a domain $\cS$, where $\cS$ is assumed to be either a compact collection of intervals in $\R$ or a compact two-dimensional domain of $\R^2$ with a piecewise $\cC^2$-boundary $\partial \cS$. We restrict ourselves to this setting, since there are simple, explicit formulas for the LKCs.
	
		
	\subsection{Definition of the LKCs for $D=1$ and $D=2$}       
       The basic idea behind the Lipschitz-Killing curvatures $ L_d(\cS,Z)$ is that they are the intrinsic volumes of $\cS$ with respect to the pseudo-metric $\tau(s,s') = \sqrt{ \var\left[ Z(s) - Z(s') \right] }\,,$ which is induced by the pseudo Riemannian metric given in standard coordinates of $\R^D$, cf. \citet{Taylor2003},
	\begin{equation}\label{eq:covDefinition}
		    \Lambda_{dl}(Z,s)=\Lambda_{dl}(s) = \cov\left[D^d\!Z(s), D^l\!Z(s) \right]\,,~~~d,l=1,...,D\,.
	\end{equation}
	For the cases $D=1,2$ the general expressions from \citet{Adler2007} can be nicely simplified. For $D=1$, i.e. $\cS\subset\R$, the only unknown is $ L_1$, which is given by
	\begin{equation}\label{eq:LKC1d}
		   L_1(\cS,Z) = {\rm vol}_1(\cS, \Lambda) = \int_S \sqrt{ \var\left[ \tfrac{dZ}{ds}(s) \right]} ds\,.
	\end{equation}
	In the case of $\cS\subset\R^2$ with piecewise $\cC^2$-boundary $\partial \cS$ parametrized by the piecewise $\cC^2$-function $\gamma:[0,1]\rightarrow\R^2$ the LKCs are given by
	\begin{align}\label{eq:LKC2d}
	\begin{split}
		     L_1 &= \frac{1}{2} {\rm length}(\partial \cS, \Lambda) = \frac{1}{2}\int_0^1 \sqrt{ \tfrac{d\gamma}{dt}^T\!\!(t)\Lambda\big(\gamma(t)\big)\tfrac{d\gamma}{dt}(t)}\,dt\\
		     L_2 &=  {\rm vol}_2(\cS, \Lambda) = \int_\cS \sqrt{ \det\big( \Lambda(s)\big) } ds_1ds_2\,.
		    \end{split}
	\end{align}
	Note that by the invariance of the length of a curve the particular choice of the parametrization $\gamma$ of $\cS$ does not change the value of $ L_1$. 
	
	\subsection{ Simple Estimators Based on Residuals}\label{Sec:SimpleLKCestimator}
	In order to estimate the unknown LKCs $ L_1$ and $ L_2$ from a sample $Y_1,...,Y_N\sim Y$ of model \eqref{eq:ContModel}, we assume that the estimators $\hat\mu_N$ of $\mu$ and $\hat\sigma_N$ of $\sigma$ satisfy $\textbf{(E1-2)}$ with $\fr=\fc$ and
	\begin{enumerate}[align=left, leftmargin=1.6cm]
		      \item[\textbf{(L)} ] $\lim_{N\rightarrow\infty} \Vert D^d\!\left(\sigma/\hat\sigma_N\right) \Vert_\infty = 0$ for all $d=1,...,D$ almost surely.
	\end{enumerate}
	These conditions imply that the normed residuals
	\begin{equation}\label{def:normedresiduals}
		    R_n(s) = \big( Y_n(s) - \hat\mu(s)\big) / \hat\sigma_N(s) \,,~~~ s \in \cS\,,n=1,...,N
	\end{equation}
	and its gradient converge uniformly almost surely to the random processes $Z_n$ and $\nabla Z_n$. Let us denote the vector of residuals by $\textbf{R}(s)=\big(R_1(s),...,R_N(s)\big)$. In view of equation \eqref{eq:LKC1d} it is hence natural to estimate the LKC $ L_1$ for $D=1$ by
	\begin{equation}\label{eq:L1IntegralEstimator}
		    \hat{L}_1^N = \int_0^1 \sqrt{  \widehat{\rm var} \left[ \tfrac{d}{ds}\! \left(\tfrac{\textbf{R}(s)}{\hat\sigma_N(s)}\right) \right] } \,ds\,,
	\end{equation}
	and using equations \eqref{eq:LKC2d} the LKCs for $D=2$ by
	\begin{align}\label{eq:L1L2IntegralEstimator}
	      \begin{split}
		     \hat L_1^N &= \frac{1}{2} {\rm length}(\partial \cS, \Lambda) = \frac{1}{2}\int_0^1 \sqrt{ \tfrac{d\gamma}{dt}^T\!\!(t)\hat \Lambda_N\big(\gamma(t)\big)\tfrac{d\gamma}{dt}(t)}\,dt\\
		     \hat L_2^N &=  {\rm vol}_2(\cS, \Lambda) = \int_\cS \sqrt{ \det\big( \hat \Lambda_N(s)\big) } ds_1ds_2\,,
	       \end{split}
	\end{align}
	where $\hat\Lambda_N(s) = \widehat{\var}_N\left[ \nabla\textbf{R}(s) \right]$ is the empirical covariance matrix of $\nabla\textbf{R}(s)$. Thus, we need to study mainly the properties of this estimator to ensure properties of the estimated LKCs.
	\begin{theorem}\label{thm:CovarianceConsistency}
		Assume $Z$ and $D^d Z$ for $d=1,..., D$ are $(\cL^2,\delta)-$Lipschitz processes and $\textbf{(E2)}$ and $\textbf{(L)}$ hold true. Then, we obtain
		\begin{equation*}
				  \lim_{N\rightarrow\infty} \left\Vert  \hat\Lambda_N - \Lambda \right\Vert_\infty = 0 \text{ almost surely}\,.
		\end{equation*}
		  If $Z$ and $D^d Z$ for $d=1,..., D$ are even $(\cL^4,\delta)-$Lipschitz processes, we obtain
		\begin{equation*}
		    \sqrt{N}\Big( \iota\big( \hat\Lambda_N \big) - \iota\big( \Lambda\big) \Big) \xRightarrow{N\rightarrow\infty}  \cG\big(0, \mathfrak{t} \big)
		\end{equation*}
		with $\iota: {\rm Sym(2)}\rightarrow \R^3$ mapping
		\begin{equation*}
		    \begin{pmatrix}
				      a & b \\
				      b & c 
		         \end{pmatrix} \mapsto (a,b,c)
		\end{equation*}
		and the matrix valued covariance function $\mathfrak{t}:\cS\times\cS\rightarrow {\rm Sym(3)}$ is given componentwise by
		\begin{equation}\label{eq:asymCovariance}
		  \mathfrak{t}_{dl}(s,s') = \cov\left[ D^d Z(s)D^l Z(s), D^d Z(s')D^l Z(s') \right]
		\end{equation}
		for all $s,s'\in\cS$ and $d,l=1,...,D$.
	\end{theorem}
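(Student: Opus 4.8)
The plan is to treat the two assertions separately and in both cases to reduce the residual-based quantity $\hat\Lambda_N = \widehat{\var}_N[\nabla\textbf{R}]$ to the empirical covariance of the true gradient processes $\nabla Z_1,\dots,\nabla Z_N$. Writing $R_n = a_N Z_n + b_N$ with the data-measurable, index-independent functions $a_N = \sigma/\hat\sigma_N$ and $b_N = (\mu-\hat\mu_N)/\hat\sigma_N$, the centred residual gradients satisfy $\nabla R_n - \overline{\nabla R} = a_N(\nabla Z_n - \overline{\nabla Z}) + (\nabla a_N)(Z_n - \bar Z)$, so that entrywise
\begin{align*}
(\hat\Lambda_N)_{dl} &= a_N^2\,\widehat{\cov}_N[D^d Z, D^l Z] + a_N (D^l a_N)\,\widehat{\cov}_N[D^d Z, Z]\\
&\quad + a_N(D^d a_N)\,\widehat{\cov}_N[Z, D^l Z] + (D^d a_N)(D^l a_N)\,\widehat{\var}_N[Z].
\end{align*}
By \textbf{(E2)} we have $a_N\to 1$ and by \textbf{(L)} we have $\Vert D^d a_N\Vert_\infty \to 0$ uniformly almost surely; the remaining empirical moments are handled by a strong law and a central limit theorem in the separable Banach space $\cC(\cS)$.

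For the almost sure consistency I would invoke the strong law of large numbers in $\cC(\cS)$ (Mourier's theorem) applied to the i.i.d. elements $s\mapsto D^d Z_n(s)\,D^l Z_n(s)$ and $s\mapsto D^d Z_n(s)$. Their integrability, $\EE\Vert D^d Z\,D^l Z\Vert_\infty < \infty$ and $\EE\Vert D^d Z\Vert_\infty < \infty$, follows from the $(\cL^2,\delta)$-Lipschitz hypothesis on $Z$ and $D^d Z$ through Proposition \ref{prop:redundancy} together with Cauchy--Schwarz. Hence each empirical covariance above converges uniformly a.s. to its population value; in particular $\widehat{\cov}_N[D^d Z, D^l Z]\to \Lambda_{dl}$ and $\widehat{\var}_N[Z]\to 1$. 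Combining this with $a_N\to 1$ and $D^d a_N\to 0$ a.s. makes the last three terms vanish and the first converge to $\Lambda_{dl}$, giving $\Vert\hat\Lambda_N - \Lambda\Vert_\infty\to 0$ a.s.

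For the central limit theorem I would upgrade the strong law to the functional CLT in $\cC(\cS)$ of \cite{Jain1975} (see also \citet[Section 10.1]{Ledoux2013}) applied to the i.i.d. symmetric-matrix-valued processes $W_n(s) = \iota\big(\nabla Z_n(s)\nabla Z_n(s)^T\big)$, which have mean $\iota(\Lambda(s))$ since $\EE[\nabla Z]=0$. The required hypotheses, a finite second $\cC(\cS)$-moment and the $(\cL^2,\delta)$-Lipschitz/entropy condition for $W_n$, are exactly what strengthening to $(\cL^4,\delta)$-Lipschitz buys: each factor $D^d Z$ then has a finite fourth $\cC(\cS)$-moment, and the elementary increment bound $\vert (UV)(s)-(UV)(s')\vert\le \Vert U\Vert_\infty\vert V(s)-V(s')\vert + \Vert V\Vert_\infty\vert U(s)-U(s')\vert$ shows, via Cauchy--Schwarz on the Lipschitz coefficients, that the product is $(\cL^2,\delta)$-Lipschitz with finite second $\cC(\cS)$-moment. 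This yields $\sqrt N\big(\tfrac1N\sum_n W_n - \iota(\Lambda)\big)\Rightarrow\cG(0,\mathfrak{t})$ with $\mathfrak{t}$ the covariance of $W_1$, i.e. precisely \eqref{eq:asymCovariance}. The mean-centring and the factor $N/(N-1)$ contribute $O_P(1/N)$ and $o_P(1)$ after scaling by $\sqrt N$ (using $\overline{\nabla Z}=O_P(1/\sqrt N)$), so $\sqrt N\big(\iota(\widehat{\var}_N[\nabla\textbf{Z}]) - \iota(\Lambda)\big)\Rightarrow\cG(0,\mathfrak{t})$.

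The main obstacle is the final reduction, namely showing $\sqrt N\big(\iota(\hat\Lambda_N) - \iota(\widehat{\var}_N[\nabla\textbf{Z}])\big)\to 0$ in probability, i.e. that replacing $\nabla Z_n$ by the residual gradient $\nabla R_n$ is negligible at the $\sqrt N$ scale. The cross terms are controlled by combining \textbf{(L)} with the structural identity $\cov[Z(s), D^l Z(s)] = 0$, which holds because $\var[Z(s)]\equiv 1$ forces the derivative of $\fc(s,s)$ to vanish; consequently $\widehat{\cov}_N[Z, D^l Z]$ is centred at zero and is $O_P(1/\sqrt N)$, so that $\sqrt N\,\Vert D^d a_N\Vert_\infty\,\Vert\widehat{\cov}_N[Z, D^l Z]\Vert_\infty \to 0$. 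The genuinely delicate piece is the leading scaling factor in $\sqrt N\,(a_N^2 - 1)\,\widehat{\cov}_N[D^d Z, D^l Z]$: since $\widehat{\cov}_N[D^d Z, D^l Z]\to \Lambda_{dl}\neq 0$, its negligibility requires controlling the uniform estimation error of the normalisation $a_N = \sigma/\hat\sigma_N$ beyond the rate-free convergence supplied by \textbf{(E2)}. Here I would exploit the parametric behaviour of $\hat\sigma_N$ available in the signal-plus-noise model together with \textbf{(L)}, and this is the step at which the argument must be carried out most carefully; working directly with $\widehat{\var}_N[\nabla\textbf{Z}]$ isolates this residual-estimation effect as the sole remaining difficulty.
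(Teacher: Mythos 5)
Your proposal follows the paper's own route essentially step for step: the same four-term expansion of $\widehat{\cov}_N\!\left[D^d\textbf{R},D^l\textbf{R}\right]$ obtained from $\nabla R_n = \nabla\big(\tfrac{\mu-\hat\mu_N}{\hat\sigma_N}\big)+ (\nabla a_N) Z_n + a_N \nabla Z_n$ with $a_N=\sigma/\hat\sigma_N$ (the paper's equation \eqref{eq:CovDecompostion}), a uniform strong law in $\cC(\cS)$ for the consistency half (the paper packages this as Lemma \ref{Lem:UniformConvergence}, proved via stochastic equicontinuity rather than Mourier's theorem, but the content is the same), and the Jain--Marcus CLT applied to the products $D^dZ\,D^lZ$, with exactly the same increment bound and Cauchy--Schwarz argument showing that products of $(\cL^4,\delta)$-Lipschitz processes with finite fourth $\cC(\cS)$-moments are $(\cL^2,\delta)$-Lipschitz. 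The consistency statement is fully proved in your write-up and matches the paper.

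For the weak-convergence half, the step you flag as ``the genuinely delicate piece'' --- showing that $\sqrt{N}\big(a_N^2-1\big)\,\widehat{\cov}_N\!\left[D^d\textbf{Z},D^l\textbf{Z}\right]$ is negligible --- is left open in your proposal, so as written the argument is incomplete. You should know, however, that this is precisely the point at which the paper's own proof offers no detail either: it simply asserts that the conclusion follows ``using \eqref{eq:CovDecompostion} and the Assumptions \textbf{(L)} and \textbf{(E2)}''. Your suspicion is substantive: \textbf{(E2)} is rate-free, and for the canonical choice $\hat\sigma_N^2=\widehat{\var}_N[\textbf{Y}]$ one has $a_N^2 = \widehat{\var}_N[\textbf{Z}]^{-1}$, so $\sqrt{N}(a_N^2-1)$ converges weakly to a nondegenerate Gaussian process rather than to zero; closing the gap therefore requires either strengthening \textbf{(E2)}/\textbf{(L)} to $o_P(N^{-1/2})$ and $o_P(N^{-1/4})$ rates respectively, or carrying an additional term $-\Lambda_{dl}$ times the limit of $\sqrt{N}(\widehat{\var}_N[\textbf{Z}]-1)$ into the limiting covariance. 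You have not missed an idea present in the paper; you have located the weakest link of its argument. A small point in your favour: your explicit use of $\cov\!\left[Z(s),D^lZ(s)\right]=0$ (forced by $\fc(s,s)\equiv 1$) to make the cross terms $O_P(N^{-1/2})$ before multiplying by $\Vert D^d a_N\Vert_\infty$ is needed for the $\sqrt{N}$-scale argument and is only implicit in the paper.
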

	
	From this theorem we can draw two interesting results for $D<3$. The first one is a consistency results of the LKCs. In a sense this is just reproving the result for the discrete estimator given in \citet{TaylorWorsley2007} without the need to refer to convergences of meshes and making underlying assumptions more visible. Especially, it becomes obvious that the Gaussianity assumption on $Z$ is not required, since we only have to estimate the covariance matrix of the gradient consistently.
	\begin{theorem}[Consistency of LKCs]\label{thm:LKCConsistency}
		    Under the setting of Theorem \ref{thm:CovarianceConsistency} it follows for $d=1,2$ that
		    \begin{equation*}
		      \Prb\left(  \lim_{N\rightarrow\infty} \hat L_d^N = L _d \right) = 1\,.
		    \end{equation*}
	\end{theorem}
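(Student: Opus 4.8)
The plan is to exploit Theorem~\ref{thm:CovarianceConsistency}, whose hypotheses are exactly the ``setting'' invoked here, so that we may take for granted that $\Vert \hat\Lambda_N - \Lambda\Vert_\infty \to 0$ almost surely. The remaining task is to recognise each $L_d$ as a \emph{continuous} functional of the metric tensor $\Lambda$ of \eqref{eq:covDefinition} with respect to the supremum norm, so that this uniform convergence of the estimated tensor transfers to the integrals \eqref{eq:L1IntegralEstimator} and \eqref{eq:L1L2IntegralEstimator}. Since the probability-one event on which $\hat\Lambda_N\to\Lambda$ uniformly does not depend on $d$, I would fix a sample point in this event and argue deterministically from there; proving $\hat L_d^N\to L_d$ on this event for $d=1,2$ then yields the almost sure statement.

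For $L_2$ in $D=2$, I would compare \eqref{eq:LKC2d} with \eqref{eq:L1L2IntegralEstimator} and bound
\[
\big\vert \hat L_2^N - L_2 \big\vert \le \vert\cS\vert\,\big\Vert \sqrt{\det\hat\Lambda_N} - \sqrt{\det\Lambda}\,\big\Vert_\infty\,,
\]
where $\vert\cS\vert$ is the Lebesgue measure of $\cS$. Since $\Lambda$ is continuous on the compact set $\cS$ it is bounded, whence $\hat\Lambda_N$ is eventually uniformly bounded; on such a bounded set the determinant is a polynomial with bounded gradient, hence Lipschitz, so $\det\hat\Lambda_N\to\det\Lambda$ uniformly. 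Composing with the square root and using the elementary inequality $\vert\sqrt a-\sqrt b\vert\le\sqrt{\vert a-b\vert}$ shows $\sqrt{\det\hat\Lambda_N}\to\sqrt{\det\Lambda}$ uniformly, and the displayed bound then tends to $0$.

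The two length terms are handled by the same template. In $D=2$ the integrand of $\hat L_1^N$ is $\sqrt{v(t)^T\hat\Lambda_N(\gamma(t))v(t)}$ with $v=\tfrac{d\gamma}{dt}$; because $\gamma$ is piecewise $\cC^2$ the vector $v$ is bounded, so the quadratic form $A\mapsto v^T A v$ converges uniformly in $t$ when $\hat\Lambda_N\to\Lambda$ uniformly, and another application of the $\tfrac12$-Hölder bound for the square root together with the finite length of $\partial\cS$ gives $\hat L_1^N\to L_1$. The case $D=1$ is the scalar specialisation: by Theorem~\ref{thm:CovarianceConsistency} the integrand $\sqrt{\widehat{\var}_N[\cdots]}$ of \eqref{eq:L1IntegralEstimator} converges uniformly to $\sqrt{\var[\tfrac{dZ}{ds}]}$, and integrating over the bounded domain finishes the argument.

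The one genuine subtlety is the behaviour of the square root at the origin: it is not Lipschitz there, so one cannot simply localise away from $\{\det\Lambda=0\}$, even though the non-degeneracy condition \textbf{(G2)} would in fact guarantee $\det\Lambda>0$. The clean resolution, which avoids needing any lower bound, is the global $\tfrac12$-Hölder inequality $\vert\sqrt a-\sqrt b\vert\le\sqrt{\vert a-b\vert}$ used above. The only other point to check is the interchange of limit and integral, which is immediate from uniform convergence on a domain of finite measure. I expect this Hölder step to be the crux, everything else being routine continuity and boundedness estimates.
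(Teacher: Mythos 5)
Your proposal is correct and follows essentially the same route as the paper's own (very terse) proof: almost sure uniform convergence of $\hat\Lambda_N$ from Theorem~\ref{thm:CovarianceConsistency}, continuity of the integrand functionals, and interchange of limit and integral over the compact domain. Your explicit use of the $\tfrac12$-H\"older bound $\vert\sqrt a-\sqrt b\vert\le\sqrt{\vert a-b\vert}$ is in fact slightly more careful than the paper's appeal to composition ``with a differentiable function'', since the square root fails to be differentiable at the origin.
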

	An immediate corollary can be drawn for the generic pointwise estimators in the functional signal-plus-noise Model.
	\begin{corollary}\label{cor:LKCConsistency}
		      Let $\hat\sigma^2_N(s) = \widehat{\var}_N \left[ \textbf{Y}(s) \right]$ and assume that $Z$ and $D^d\! Z$ for $d=1,2$ are $(\cL^2,\delta)-$Lipschitz processes. Then for $d=1,2$  we obtain
		      \begin{equation}
				  \Prb\left(  \lim_{n\rightarrow\infty} \hat L_d^N = L_d \right) = 1\,.
		      \end{equation}
	\end{corollary}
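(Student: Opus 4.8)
The plan is to deduce this from Theorem \ref{thm:LKCConsistency}, whose hypotheses are precisely those of Theorem \ref{thm:CovarianceConsistency}: that $Z$ and $D^dZ$, $d=1,2$, be $(\cL^2,\delta)$-Lipschitz, and that conditions \textbf{(E2)} and \textbf{(L)} hold for the estimator at hand. The Lipschitz hypotheses are assumed outright in the corollary, and \textbf{(E2)} for the pointwise sample variance $\hat\sigma_N^2=\widehat{\var}_N[\textbf{Y}]$ has already been established in Theorem \ref{thm:GenericAsymptoticCBs}(i). Hence the entire content of the proof is the verification of \textbf{(L)}, i.e. that $\Vert D^d(\sigma/\hat\sigma_N)\Vert_\infty\to 0$ almost surely.

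The observation that makes \textbf{(L)} accessible is that the unknown scale factors out of the sample variance. Writing $Y_n=\mu+\sigma Z_n$ one gets $Y_n-\bar Y_N=\sigma(Z_n-\bar Z_N)$, hence
\begin{equation*}
  \hat\sigma_N^2(s)=\sigma(s)^2\,\hat c_N(s)\,,\qquad \hat c_N(s):=\frac{1}{N-1}\sum_{n=1}^N\big(Z_n(s)-\bar Z_N(s)\big)^2\,,
\end{equation*}
so that $\sigma/\hat\sigma_N=\hat c_N^{-1/2}$ and therefore $D^d(\sigma/\hat\sigma_N)=-\tfrac12\,\hat c_N^{-3/2}\,D^d\hat c_N$. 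Since $\sigma$ is continuous and positive on the compact set $\cS$, it is bounded away from zero, so it suffices to show that $\hat c_N\to 1$ and $D^d\hat c_N\to 0$ uniformly almost surely; then the right-hand side tends to $-\tfrac12\cdot 1\cdot 0=0$.

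The first limit is immediate: $\hat c_N=\hat\sigma_N^2/\sigma^2\to\var[Z(\cdot)]=\fc(\cdot,\cdot)=1$ uniformly a.s. by \textbf{(E2)}. For the second I would differentiate termwise (legitimate, since the $(\cL^2,\delta)$-Lipschitz hypothesis on $D^dZ$ forces $Z\in\cC^1$), obtaining
\begin{equation*}
  D^d\hat c_N(s)=\frac{2}{N-1}\sum_{n=1}^N\big(Z_n(s)-\bar Z_N(s)\big)\big(D^dZ_n(s)-D^d\bar Z_N(s)\big)\,,
\end{equation*}
and apply a uniform strong law of large numbers to get uniform a.s. convergence to $2\,\cov[Z(s),D^dZ(s)]$. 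This cross-covariance vanishes identically: differentiating the constant relation $\fc(s,s)=1$ yields $\cov[Z(s),D^dZ(s)]=\tfrac12 D^d\fc(s,s)=0$. This establishes \textbf{(L)}, and the corollary then follows from Theorem \ref{thm:LKCConsistency}.

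The main obstacle is upgrading the pointwise convergence of $D^d\hat c_N$ to uniform (in $s\in\cS$) almost-sure convergence. Pointwise, this is just the SLLN applied to the products $(Z_n(s)-\bar Z_N(s))(D^dZ_n(s)-D^d\bar Z_N(s))$, but uniformity requires the family indexed by $s$ to be Glivenko--Cantelli, which is exactly what the $(\cL^2,\delta)$-Lipschitz assumptions on $Z$ and $D^dZ$ provide through the metric-entropy integrability in their definition. By contrast, the vanishing of the limiting cross-covariance is a one-line consequence of the unit-variance normalization $\fc(s,s)\equiv 1$.
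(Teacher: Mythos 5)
Your proposal is correct and follows essentially the same route as the paper: reduce to verifying \textbf{(L)} and \textbf{(E2)}, observe that $\sigma$ factors out so that $\hat\sigma_N^2/\sigma^2=\widehat{\var}_N[\textbf{Z}]$ and $D^d(\hat\sigma_N^2/\sigma^2)=2\,\widehat{\cov}_N[\textbf{Z},D^d\textbf{Z}]$, and invoke the uniform strong law (Lemma \ref{Lem:UniformConvergence}) under the $(\cL^2,\delta)$-Lipschitz hypotheses. The only difference is that you spell out explicitly that the limiting cross-covariance $\cov[Z(s),D^dZ(s)]$ vanishes because $\fc(s,s)\equiv 1$, a point the paper leaves implicit.
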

	\begin{remark}
		  If the process $Z$ is Gaussian then we can even infer that the estimators \eqref{eq:L1IntegralEstimator} and \eqref{eq:L1L2IntegralEstimator} are unbiased, if $\hat\sigma^2_N(s) = \widehat{\var}_N \left[ \textbf{Y}(s) \right]$, see \citet{TaylorWorsley2007}.
	\end{remark}

	The second conclusion we can draw from this approach is that we can derive a CLT for the estimator of the LKCs.
	\begin{theorem}[CLT for LKCs]\label{thm:CLTlkc}
		    Assume $Z$ and $D^d\!Z$ for $d=1,2$ are even $(\cL^4,\delta)-$Lipschitz processes and $\textbf{(E2)}$ and $\textbf{(L)}$ hold true. Then, if $\cS\subset\R$,
		    \begin{equation*}
			    \sqrt{N} \left( \hat L^N_1 -  L_1 \right) \rightarrow \frac{1}{2}\int_{\cS} \frac{G(s)}{\sqrt{\Lambda(s)}} ds\,,
		    \end{equation*}
		    and, if $\cS\subset\R^2$, we obtain
		        \begin{align*}
			    \sqrt{N} \Big( \big(\hat L^N_1, \hat L^N_2 \big) - \big(  L_1,  L_2\big) \Big) \rightarrow   &\Bigg( \frac{1}{2}\int_0^1 \frac{1}{\sqrt{\gamma'(s)^T\Lambda\big(\gamma(s)\big)\gamma'(s)}} {\rm tr}\Bigg( \Lambda\big(\gamma(s)\big) \iota\Big( \cG\big(\gamma(s)\big) \Big) \Bigg) ds,\\ &~~~~~~~~~~~~~~~\int_{\cS} \frac{1}{\sqrt{\det\big(\Lambda(s)\big)}} {\rm tr}\Big( \Lambda(s) \iota\big( {\rm diag}(1,-1,1)\cG(s) \big) \Big) ds \Bigg)\,.
		      \end{align*}
		  where $\cG(s)$ is a Gaussian process with mean zero and covariance function $\mathfrak{t}$ as in Theorem \ref{thm:CovarianceConsistency} and $\gamma$ is a parametrization of the boundary $\partial \cS$.
	 \end{theorem}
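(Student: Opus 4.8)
The plan is to derive both displays from the functional central limit theorem for the empirical metric tensor in Theorem \ref{thm:CovarianceConsistency} via the functional delta method. That theorem already supplies $\sqrt{N}\big(\iota(\hat\Lambda_N)-\iota(\Lambda)\big)\Rightarrow\cG(0,\mathfrak t)$ in $\cC(\cS)$, so it remains to recognise each LKC estimator as a sufficiently smooth functional of $\hat\Lambda_N$ and push the limit through. Concretely, for $D=1$ set $\Phi_1(\Lambda)=\int_\cS\sqrt{\Lambda(s)}\,ds$, and for $D=2$ set $\Phi_1(\Lambda)=\tfrac12\int_0^1\sqrt{\gamma'(t)^T\Lambda(\gamma(t))\gamma'(t)}\,dt$ and $\Phi_2(\Lambda)=\int_\cS\sqrt{\det\Lambda(s)}\,ds$. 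By \eqref{eq:L1IntegralEstimator} and \eqref{eq:L1L2IntegralEstimator} we have $\hat L_d^N=\Phi_d(\hat\Lambda_N)$ and $L_d=\Phi_d(\Lambda)$, so the limits to be proved are exactly the images of $\cG(0,\mathfrak t)$ under the derivatives $D\Phi_d|_\Lambda$.

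Second, I would compute these derivatives by the chain rule applied pointwise under the integral sign. Since $\tfrac{d}{dx}\sqrt x=\tfrac1{2\sqrt x}$, the derivative of $\Phi_1$ (case $D=1$) in a direction $H\in\cC(\cS)$ is $H\mapsto\tfrac12\int_\cS H(s)/\sqrt{\Lambda(s)}\,ds$, and substituting the limit $\cG$ for $H$ gives the claimed scalar limit $\tfrac12\int_\cS \cG(s)/\sqrt{\Lambda(s)}\,ds$. For $D=2$ the inner maps $\Lambda\mapsto\gamma'^T\Lambda\gamma'$ and $\Lambda\mapsto\det\Lambda$ are, respectively, linear and (by Jacobi's formula) differentiable with $D\det|_\Lambda(H)=\det(\Lambda)\,\tr(\Lambda^{-1}H)$; composing each with $\tfrac{d}{dx}\sqrt x$ produces the $1/\sqrt{\gamma'^T\Lambda\gamma'}$ and $1/\sqrt{\det\Lambda}$ prefactors, and rewriting the quadratic form and the $\det$-derivative in the trace form $\tr\big(\Lambda\,\iota(\cdot)\big)$ yields the integrands recorded in the statement (up to the elementary algebraic simplification of the $2\times2$ adjugate into the $\iota$ and $\diag(1,-1,1)$ notation).

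Third, I must check that these are genuine Hadamard derivatives so that the delta method for maps into $\R^k$ applies to the weak limit in $\cC(\cS)$. The only possible loss of smoothness is at the branch point of $\sqrt\cdot$ and where $\det\Lambda=0$ or $\gamma'^T\Lambda\gamma'=0$; but the non-degeneracy hypothesis \textbf{(G2)} for $\cG(0,\fc)$ forces $\Lambda(s)$ to be positive definite at every $s$, and compactness of $\cS$ then gives a uniform bound $\inf_{s\in\cS}\lambda_{\min}(\Lambda(s))>0$. On the open set of matrix fields with this uniform positivity the integrands are uniformly bounded smooth functions of $\Lambda$, the derivative maps $H\mapsto D\Phi_d|_\Lambda(H)$ are continuous and linear on $\cC(\cS)$, and $\hat\Lambda_N$ lies in this set with probability tending to one by the uniform consistency part of Theorem \ref{thm:CovarianceConsistency}. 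Hence each $\Phi_d$ is Hadamard differentiable at $\Lambda$ tangentially to $\cC(\cS)$, and the delta method delivers the displayed convergences with limit $D\Phi_d|_\Lambda\big(\cG(0,\mathfrak t)\big)$.

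I expect the verification of Hadamard differentiability of the integral functionals, and in particular of the boundary-integral functional $\Phi_1$ for $D=2$ along the piecewise-$\cC^2$ curve $\gamma$, to be the main obstacle: one must show that the first-order remainder in the expansion of each integrand is $o(\|H\|_\infty)$ uniformly in $s$, and that integrating this expansion preserves the order, which is precisely where the uniform positive-definiteness lower bound is needed. Everything else—identifying the functionals, differentiating $\sqrt\cdot$ and $\det$, and invoking the delta method—is routine once that uniform control is secured.
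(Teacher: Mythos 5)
Your proposal follows essentially the same route as the paper: both recognise the LKC estimators as functionals of $\hat\Lambda_N$, invoke the functional delta method on the fCLT of Theorem \ref{thm:CovarianceConsistency}, and compute the Hadamard derivative by composing the (linear, hence Fréchet differentiable) integral operator with the derivative of $\sqrt{\cdot}$ and of the determinant/quadratic-form maps; the paper simply outsources the Hadamard differentiability of $f\mapsto\sqrt{f}$ to a lemma of Kosorok where you argue it directly from uniform positive definiteness of $\Lambda$.
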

	 \begin{corollary}\label{cor:1DCLT}
		      Assume additionally to the Assumptions of Theorem \ref{thm:CLTlkc} that $Z$ is Gaussian with covariance function $\fc$ and $\cS\subset\R$. Then, we have the simplified representation
		      \begin{equation}
			\sqrt{N}\left( \hat  L_1^N -  L_1 \right)\xRightarrow{N\rightarrow\infty}\cN\big(0, \tau^2 \big)\,,
		      \end{equation}
		      where
		      \begin{equation*}
			\tau^2 = \frac{1}{2} \int_S \int_S \frac{\dot{\mathfrak{c}}(s,s')^2 }{ \sqrt{\dot{\mathfrak{c}}(s,s')\dot{\mathfrak{c}}(s,s')} }  ds ds'~ ~ \text{with}~ ~\dot{\mathfrak{c}}(s,s')= D^1_{s}\!D^1_{s'} \mathfrak{c}(s,s')
		      \end{equation*}
	  \end{corollary}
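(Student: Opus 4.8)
The plan is to start directly from the one-dimensional conclusion of Theorem \ref{thm:CLTlkc}, which already identifies the weak limit of $\sqrt{N}\big(\hat L_1^N - L_1\big)$ as the random variable
\begin{equation*}
	W := \frac{1}{2}\int_{\cS} \frac{G(s)}{\sqrt{\Lambda(s)}}\, ds\,,
\end{equation*}
where $G$ is the mean-zero Gaussian process with covariance function $\mathfrak{t}$ from Theorem \ref{thm:CovarianceConsistency} and, in $D=1$, $\Lambda(s)=\cov\big[D^1 Z(s), D^1 Z(s)\big]$. Under the stated assumptions this limit is already established; the only work remaining is to argue that $W$ is a univariate Gaussian variable and to evaluate its variance under the additional Gaussianity hypothesis on $Z$.

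First I would observe that $W$ is a bounded linear functional of the sample paths of $G$. The non-degeneracy condition \textbf{(G2)} forces $\Lambda(s)=\var\big[D^1 Z(s)\big]>0$ on the compact set $\cS$, so the weight $1/\sqrt{\Lambda(\cdot)}$ is continuous and bounded; combined with the almost-sure continuity of $G$ this makes the integral well defined pathwise and square-integrable, and a continuous linear image of a Gaussian process is again Gaussian. Hence $W\sim\cN(0,\tau^2)$ with $\EE[W]=0$ and, by Tonelli together with the bilinearity of the covariance,
\begin{equation*}
	\tau^2 = \var[W] = \frac{1}{4}\int_{\cS}\int_{\cS} \frac{\cov\big[G(s),G(s')\big]}{\sqrt{\Lambda(s)\Lambda(s')}}\, ds\, ds' = \frac{1}{4}\int_{\cS}\int_{\cS} \frac{\mathfrak{t}(s,s')}{\sqrt{\Lambda(s)\Lambda(s')}}\, ds\, ds'\,.
\end{equation*}

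The Gaussian assumption on $Z$ enters only in the final step, when computing $\mathfrak{t}$. Writing $\dot Z = D^1 Z$, by definition $\mathfrak{t}(s,s')=\cov\big[\dot Z(s)^2, \dot Z(s')^2\big]$. Since $Z$ is Gaussian and differentiation is linear, the pair $\big(\dot Z(s),\dot Z(s')\big)$ is jointly mean-zero Gaussian, so Isserlis' (Wick's) theorem gives $\EE\big[\dot Z(s)^2 \dot Z(s')^2\big] = \EE\big[\dot Z(s)^2\big]\EE\big[\dot Z(s')^2\big] + 2\,\cov\big[\dot Z(s),\dot Z(s')\big]^2$, whence $\mathfrak{t}(s,s') = 2\,\dot{\mathfrak{c}}(s,s')^2$ with $\dot{\mathfrak{c}}(s,s')=\cov\big[\dot Z(s),\dot Z(s')\big]=D^1_s D^1_{s'}\mathfrak{c}(s,s')$. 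In particular $\Lambda(s)=\dot{\mathfrak{c}}(s,s)$. Substituting these two identities into the displayed variance yields the stated expression for $\tau^2$.

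The only genuinely delicate point is the justification that $G$ is integrable enough for $W$ to be a bona fide square-integrable Gaussian variable and for the interchange of expectation with the double integral to be valid; this rests on continuity of $\mathfrak{t}$ and of $\Lambda$ on the compact set $\cS$, which follow from the $(\cL^4,\delta)$-Lipschitz hypotheses already in force, together with the strict positivity of $\Lambda$ from \textbf{(G2)} so that $1/\sqrt{\Lambda}$ stays bounded. The covariance computation itself is a one-line application of Wick's formula, and it is precisely the reason the Gaussian hypothesis cannot be dropped: for a non-Gaussian $Z$ the fourth-order term defining $\mathfrak{t}$ would carry the fourth cumulant of $\dot Z$ and would not collapse to $2\,\dot{\mathfrak{c}}^2$.
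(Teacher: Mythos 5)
Your proposal is correct and follows essentially the same route as the paper: both start from the one-dimensional limit $\tfrac{1}{2}\int_{\cS} G(s)/\sqrt{\Lambda(s)}\,ds$ delivered by Theorem \ref{thm:CLTlkc}, identify it as a mean-zero Gaussian variable, and evaluate its variance by Isserlis'/Wick's formula, which under Gaussianity of $Z$ gives $\mathfrak{t}(s,s')=2\,\dot{\mathfrak{c}}(s,s')^2$. Your write-up is in fact slightly more careful than the paper's: the paper's displayed computation drops the square in the intermediate step (writing $2\dot{\mathfrak{c}}(s,s')$ where $2\dot{\mathfrak{c}}(s,s')^2$ is meant), and your explicit justification of the interchange of expectation with the double integral, as well as the role of \textbf{(G2)} in keeping $1/\sqrt{\Lambda}$ bounded, is left implicit there.
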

	  
	  \subsection{Estimation of LKCs in the Two Sample Problem}
	  In the previous section we dealt with the case of estimating the LKCs in the case of one sample. This idea can be extended to the two sample case leading to a consistent estimator of the LKCs of the asymptotic process given in Theorem \ref{thm:GenericAsymptoticCBdiffs}. Using the same notations as before we have that the difference in mean satisfies the following fCLT
	  \begin{equation*}
		  \frac{ \sqrt{N+M-2} \big( \bar{\textbf{Y}} -  \bar{\textbf{X}} -\mu_Y + \mu_X \big) }{ \sqrt{ (1+c^{-1})\hat\sigma^2_N(\textbf{Y}) + (1+c)\hat\sigma^2_N(\textbf{X}) }} \xRightarrow{N,M\rightarrow\infty}  \cG = \frac{ \sqrt{1+c^{-1}}\sigma_{Y}\cG_Y -  \sqrt{1+c}\sigma_{X}\cG_X}{\sqrt{(1+c^{-1})\sigma_{Y}^2 + (1+c)\sigma_{X}^2}}\,.
	  \end{equation*}
	  Therefore, note that independence of the samples implies that the covariance matrix defined in equation \eqref{eq:covDefinition} splits into a sum of covariance matrices depeding on $\cG_X$ and $\cG_Y$, i.e.,
	  \begin{equation*}
	      \Lambda(\cG) =  \Lambda\!\left( \frac{ \sqrt{1+c^{-1}}\sigma_{Y}\cG_Y}{\sqrt{(1+c^{-1})\sigma_{Y}^2 + (1+c)\sigma_{X}^2}} \right) +   \Lambda\!\left( \frac{ \sqrt{1+c}\sigma_{X}\cG_X}{\sqrt{(1+c^{-1})\sigma_{Y}^2 + (1+c)\sigma_{X}^2}} \right)\,.
	  \end{equation*}
The latter summands, however, under the assumptions of Theorem \ref{thm:CovarianceConsistency} can be separately consistently estimated using the previously discussed method and the residuals
	  \begin{align*}
		R^Y_n   &=  \frac{ \sqrt{1+c^{-1}} \cdot (Y_n -  \bar{\textbf{Y}}) }{ \sqrt{(1+c^{-1})\hat\sigma_N(\textbf{Y}) +  (1+c)\hat\sigma_N(\textbf{X})}}\\
		R^X_n  &=  \frac{ \sqrt{1+c} \cdot (X_n -  \bar{\textbf{X}}) }{ \sqrt{(1+c^{-1})\hat\sigma^2_N(\textbf{Y}) +  (1+c)\hat\sigma^2_N(\textbf{X})}}\,.
	  \end{align*}
	 The sum of these estimators is a consistent estimator of $\Lambda(\cG)$ and thus the estimator described in the previous section based on the estimate of $\Lambda(\cG)$ using the above residuals gives a consistent estimator of the LKCs of $\cG$.

	\section{Discrete Measurements, Observation Noise and Scale Spaces}\label{sec:ObsNoise}
	
	We discuss now the realistic scenario, where the data is observed on a measurement grid with possible additive measurement noise. Again for simplicity we restrict ourselves to the signal-plus-noise model, which is
	\begin{equation}\label{eq:DiscreteModel}
		  Y(s_{p}) = \mu(s_{p}) + \sigma(s_{p})Z(s_{p}) + \varepsilon(s_{p})\,, \text{ for }p=1,...,P\,,
	\end{equation}
	where $\mathbf{S} = (s_1,...,s_p)\in \cS\subset\R^{D\times P}$. Here we assume that $\varepsilon$ is stochastic process on $\cS$ with finite second $\cC(\cS)$-moment representing the observation noise and covariance function $\mathfrak{e}$. We also assume that $\varepsilon$, $Z$ and $\mathbf{S}$ are mutually independent. We are interested in doing inference on the mean $\mu$, if we observe a sample $(\mathbf{S}_{1}, Y_1) ,..., (\mathbf{S}_{N}, Y_N) \iid (\mathbf{S}, Y)$.

	Since it is still a challenge to optimally choose the bandwidth for finite and especially small sample sizes, if we want to estimate the population mean directly, we rather rely here on the idea of scale spaces, which was originally introduced for regression in \cite{Chaudhuri1999} and \cite{Chaudhuri2000}. The goal is then to show that we can provide SCBs for the population mean simultaneously across different scales. Therefore we define now scale spaces derived from Pristley-Rao smoothing estimators which could, however, be replaced by local polynomial or other appropriate linear smoothers.
	\begin{definition}[Scale Space Process]
		  We define the \emph{Scale Space Process} with respect to a continuous kernel functions $K: \tilde\cS \times [h_0,h_1] \rightarrow \R $ with $\tilde\cS\supset\cS$ and $0<h_0<h_1<\infty$ corresponding to Model \eqref{eq:DiscreteModel} as
		  \begin{equation*}
			    \tilde Y(s,h) =  \frac{1}{P}\sum_{p=1}^P Y(s_p) K( s - s_p,h )
		  \end{equation*}
		  with scale mean
		  \begin{equation*}
			   \tilde\mu(s,h) = \frac{1}{P}\sum_{p=1}^P \mu(s_p) K( s - s_p,h )\,.
		  \end{equation*}
	\end{definition}
	In order to apply the previously presented theory we have to obtain first a functional CLT. The version we present is similar to Theorem 3.2 of \citet{Chaudhuri2000} with the difference that we consider the limit with respect to the number of observed curves and include the case of possibly having the (random) measurement points depend on the number of samples, too. The regression version in \citet{Chaudhuri2000} only treats the limit of observed measurement points going to infinity.
	\begin{theorem}\label{thm:CLTscaleProcess}
		  Let $Y_1,...,Y_N \iid Y$ be a sample from Model \eqref{eq:DiscreteModel}. Assume further that $Z$ has finite second $\cC(\cS)$-moment, $\mathbf{S}$ is a possibly random $P$ vector, where $P$ is allowed to depend on $N$. Moreover, assume that $\max_{s\in\cS} \sigma(s) \leq B <\infty$ and
		  \begin{align}\label{assumption:covarianceExists}
		  	\begin{split}
			   \mathfrak{r}\big( (s,h),(s',h') \big) = \lim_{ N\rightarrow\infty } \frac{1}{P^2} \sum_{p,p'=1}^{P} &\EE\left[ \big( \sigma(s_{p})\sigma(s_{p'})\fc(s_p,s_{p'}) +  \mathfrak{e}(s_p,s_{p'}) \big) K( s - s_p, h) K( s' - s_{p'}, h')\right]
			\end{split}
		  \end{align}
		  exists for all $(s,h), (s',h')\in\cS\times\cH$, where the expectations are w.r.t. $\mathbf{S}$. Finally, assume that the smoothing kernel $(s,h)\mapsto K( s ,h )$ is $\alpha$-H\"older continuous. Then in $\cC(\cS\times\cH)$
		  \begin{equation*}
			    \sqrt{N}\left( N^{-1}\sum_{n=1}^N \tilde Y_n(s,h) - \EE\left[\tilde\mu(s,h)\right] \right)\xRightarrow{N\rightarrow\infty} \cG(0,\mathfrak{r})\,.
		  \end{equation*}
	\end{theorem}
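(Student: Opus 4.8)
The plan is to recognise this as a functional central limit theorem for a row-wise i.i.d.\ triangular array in $\mathcal{C}(\cS\times\cH)$ and to prove it by the standard two-step programme: (i) convergence of the finite-dimensional distributions to the Gaussian law prescribed by $\fr$, and (ii) asymptotic tightness (equicontinuity) of the sequence. Write $X_N(s,h)=N^{-1/2}\sum_{n=1}^N\big(\tilde Y_n(s,h)-\EE[\tilde\mu(s,h)]\big)$. Since $Z$ and $\varepsilon$ are centred and independent of the design $\mathbf S$, one checks $\EE[\tilde Y_n(s,h)]=\EE[\tilde\mu(s,h)]$, so the summands are centred and $X_N$ is a normalised sum of the i.i.d.\ (within the $N$-th row) random functions $\tilde Y_n-\EE[\tilde\mu]$; note the row distribution changes with $N$ because $P=P(N)$ and the design is random.

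For step (i), fix points $(s_1,h_1),\dots,(s_k,h_k)$ and decompose each summand as $\tilde Y_n=A_n+B_n$, where $A_n(s,h)=P^{-1}\sum_p\mu(s_{n,p})K(s-s_{n,p},h)$ is the design-dependent mean part and $B_n(s,h)=P^{-1}\sum_p\big(\sigma(s_{n,p})Z_n(s_{n,p})+\varepsilon_n(s_{n,p})\big)K(s-s_{n,p},h)$ the noise part. Conditioning on $\mathbf S_n$ shows $A_n-\EE[A_n]$ and $B_n$ are uncorrelated, and integrating out $Z_n,\varepsilon_n$ first gives $\EE[B_n(s,h)B_n(s',h')]$ equal to the pre-limit expression in \eqref{assumption:covarianceExists}; hence $\cov[B_n]\to\fr$ by hypothesis. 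It then remains to argue that the design-induced fluctuation of the mean part is asymptotically negligible, i.e.\ $\var[A_n(s,h)]\to0$ (automatic for a common or deterministic design, and of order $P^{-1}$ for an i.i.d.\ within-curve design with $P\to\infty$), so that the total limiting covariance is exactly $\fr$. Finite-dimensional convergence then follows from the multivariate Lindeberg--Feller CLT: the Lindeberg condition holds because each centred summand is dominated, uniformly in $N$, by the square-integrable envelope $\|K\|_\infty\big(\|\mu\|_\infty+B\|Z_n\|_\infty+\|\varepsilon_n\|_\infty\big)$, whose distribution does not depend on $N$ and whose second moment is finite by the bounded-$\sigma$ assumption and the finite second $\mathcal{C}(\cS)$-moments of $Z$ and $\varepsilon$.

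For step (ii), the $\alpha$-Hölder continuity of $(s,h)\mapsto K(s,h)$ is the crucial ingredient: it yields $|K(s-s_p,h)-K(s'-s_p,h')|\le L\,\|(s,h)-(s',h')\|^\alpha$ and hence
\[
\big|\tilde Y_n(s,h)-\tilde Y_n(s',h')\big|\le L\,M_n\,\delta\big((s,h),(s',h')\big),\qquad M_n=\|\mu\|_\infty+B\|Z_n\|_\infty+\|\varepsilon_n\|_\infty,
\]
where $\delta((s,h),(s',h'))=\|(s,h)-(s',h')\|^\alpha$. Thus every centred summand is $(\mathcal{L}^2,\delta)$-Lipschitz with an $N$-independent square-integrable Lipschitz constant $M_n$. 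Since $\cS\times\cH$ is a bounded subset of $\R^{D+1}$, covering it for $\delta$ at radius $\epsilon$ amounts to Euclidean covering at radius $\epsilon^{1/\alpha}$, so $H(\cS\times\cH,\delta,\epsilon)\lesssim\tfrac{D+1}{\alpha}\log(1/\epsilon)$ and the entropy integral $\int_0^1 H^{1/2}(\cS\times\cH,\delta,\epsilon)\,d\epsilon$ converges. Feeding this uniform modulus of continuity into a symmetrisation-and-chaining maximal inequality gives, uniformly in $N$, $\EE\big[\sup_{\delta(\cdot,\cdot)<\rho}|X_N(s,h)-X_N(s',h')|\big]\lesssim\int_0^\rho H^{1/2}(\cS\times\cH,\delta,\epsilon)\,d\epsilon\to0$ as $\rho\to0$, which is exactly asymptotic equicontinuity; together with (i) this yields tightness, sample continuity of the limit, and weak convergence to $\cG(0,\fr)$ in $\mathcal{C}(\cS\times\cH)$.

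The main obstacle is precisely the triangular-array structure. Because $P$ may grow with $N$ and the design is random, the summands $\tilde Y_n-\EE[\tilde\mu]$ do not form a single fixed i.i.d.\ sequence, so the $\mathcal{C}(\cS)$-CLT of \cite{Jain1975} (used elsewhere in the paper) cannot be quoted verbatim; instead I would re-run its chaining argument while checking that every increment bound is controlled by the $N$-independent envelope $M_n$. The two places where this uniformity must be verified with care are the Lindeberg step and the maximal inequality in (ii), and the $\alpha$-Hölder hypothesis on $K$ together with the finite second $\mathcal{C}(\cS)$-moments are exactly what make both bounds uniform in $N$. A secondary point requiring attention is showing that the sampling fluctuation of the mean part $A_n$ does not leak into the limiting covariance, which is where the precise nature of the design (and, if needed, $P\to\infty$) enters.
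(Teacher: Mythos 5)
Your proposal is correct and rests on exactly the same ingredients as the paper's proof, but it travels a more self-contained route. The paper verifies the hypotheses of a packaged functional CLT for triangular arrays (Pollard 1990, Theorem 10.6): it defines $f_{Nn}(s,h)=\tfrac{1}{\sqrt{N}P}\sum_p(\sigma(s_p)Z_n(s_p)+\varepsilon_{np})K(s-s_p,h)$, uses the $\alpha$-H\"older continuity of $K$ together with Cauchy--Schwarz to obtain manageability with the envelope $F_{Nn}\propto N^{-1/2}(B\Vert Z_n\Vert_\infty+\Vert\varepsilon_n\Vert_\infty)$, checks square-integrability of the envelope and a Lindeberg condition via Markov's inequality, and takes condition (ii) of Pollard's theorem to be Assumption \eqref{assumption:covarianceExists}. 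You instead re-derive the two halves yourself --- Lindeberg--Feller for the finite-dimensional distributions and a chaining/entropy-integral argument for asymptotic equicontinuity --- which is the same mathematics unpacked; what it buys is independence from the ``manageability'' machinery at the cost of having to re-run the maximal inequality uniformly in $N$, which your $N$-independent envelope $M_n$ indeed permits. One point where your write-up is actually \emph{more} careful than the paper's: the paper's $f_{Nn}$ contains only the noise part, silently dropping the design-dependent signal term $P^{-1}\sum_p\mu(s_p)K(s-s_p,h)$, whose fluctuation around its expectation contributes to the statistic in the theorem whenever the design is random and $\mu$ is nonconstant; your explicit requirement that $\var[A_n(s,h)]\to0$ (automatic for deterministic designs, of order $P^{-1}$ otherwise) is a genuine condition that the stated theorem needs but the paper does not articulate. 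Apart from making that extra hypothesis explicit, your argument is sound.
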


	\begin{remark}\label{remark:nonRandS}
		  Note that an important example covered by the above Theorem is the case where $\mathbf{S}\subset[0,1]$ is non-random and given by $s_p=(p-0.5)/P$ for $p=1,...,P$ for all $n=1,...,N$ and $\varepsilon(s_{1}),...,\varepsilon(s_{P})\iid \cN(0,\eta^2)$. If $P$ is independent of $N$, the Assumption \eqref{assumption:covarianceExists} is trivially satisfied. If $P\rightarrow\infty$ as $N\rightarrow\infty$ it is suffcient that the following integral exists and is finite for all $(s,h),(s',h')\in \cS\times\cH$
		  \begin{align*}
			\mathfrak{r}\big( (s,h),(s',h') \big) = \int_\cS \int_\cS \big(\sigma(\tau)\sigma(\tau')\fc(\tau,\tau') + \mathfrak{e}(\tau,\tau')\big)K(s-\tau,h)K(s'-\tau',h')\,d\tau d\tau'\,,
		  \end{align*}
		  in order to have Assumption \eqref{assumption:covarianceExists} satisfied, which follows for example if $\fc$ and  $\mathfrak{e}$ are continuous.
	\end{remark}

	In order to use Theorem \ref{thm:AsymptoticValidity} we only have to show that we can estimate the LKCs consistently and the assumptions of the GKF are satisfied. The next Proposition gives sufficient conditions on the kernel $K$ to ensure this.
	\begin{proposition}\label{prop:ScaleSCBsufficientCond}
	    Assume the setting of Theorem \ref{thm:CLTscaleProcess} and additionally assume that the kernel $K\in\cC^3(\cS\times\cH)$. Define
	    \begin{equation}
		    \hat{\tilde\mu}(s,h) = \frac{1}{N} \sum_{n=1}^N \tilde Y_n(s,h)~ ~\text{ and }~ ~ \hat{\tilde\sigma}(s,h) =  \sqrt{\frac{1}{N} \sum_{n=1}^N \left(\tilde Y_n(s,h)-\hat{\tilde\mu}(s,h)\right)^2}
	    \end{equation}
	    and assume that $\mathfrak{r}$ has continuous partial derivatives up to order $3$ and the covariance matrices of
	    \begin{equation*}
			  \lim_{ N\rightarrow\infty } \frac{1}{P} \sum_{p=1}^{P} \big( \sigma(s_{p}) Z(s_p) + \varepsilon(s_{p}) \big) \left( \frac{\partial K( s - s_p, h)}{\partial x}, \frac{\partial^2 K( s - s_p, h)}{\partial s\partial h} \right)\,, ~~x = s \text{ or }h\,,
	    \end{equation*}
	   have rank $2$. Then $\hat{\tilde\mu}(s,h)$ and $\hat{\tilde\sigma}(s,h)$ satisfy Assumptions \textbf{(L)}, \textbf{(E2)} and \textbf{(G3)}. Thus, all assumptions of Theorem \ref{thm:AsymptoticValidity} are satisfied.
	\end{proposition}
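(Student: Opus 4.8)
The plan is to recognise the scale space process as an instance of the functional signal-plus-noise framework of Section~\ref{sec:LKCestimation}, now over the enlarged index set $\cS\times\cH$ (recall $\cS\subset\R$, so that $\cS\times\cH\subset\R^2$ and we are in the $D=2$ case of the LKC theory), with the centred smoothed curves $\tilde Y_n(s,h)-\EE[\tilde\mu(s,h)]$ playing the role of the error term $\sigma Z$. Observe that $\hat{\tilde\mu}$ and $\hat{\tilde\sigma}$ are exactly the pointwise sample mean and standard deviation of the i.i.d. random functions $\tilde Y_n$, so they match $\hat\eta_N,\hat\varsigma_N$. With this identification, assumption \textbf{(E1)} is precisely the conclusion of Theorem~\ref{thm:CLTscaleProcess}, with limit $\cG(0,\mathfrak{r})$ and scaling $\tau_N=\sqrt N$. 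It therefore remains to verify \textbf{(E2)}, \textbf{(L)}, \textbf{(G1)}–\textbf{(G3)}, and the existence of consistent LKC estimators, after which Theorem~\ref{thm:AsymptoticValidity} applies verbatim.

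For \textbf{(E2)} and \textbf{(L)} I would write $\varsigma(s,h)=\sqrt{\mathfrak{r}\big((s,h),(s,h)\big)}$ for the target standard deviation and note that, because $K\in\cC^3(\cS\times\cH)$, each $\tilde Y_n$ has $\cC^3$ sample paths in $(s,h)$, with derivatives obtained by differentiating the kernel inside the finite design average. I would then establish a uniform strong law of large numbers in $\cC(\cS\times\cH)$ for the pointwise sample second moment of $\tilde Y_n$ and of its first partial derivatives: the finite second $\cC(\cS)$-moments of $Z$ and $\varepsilon$ together with the bounded $\cC^3$ kernel give finite $\cC(\cS\times\cH)$-moments and equicontinuity of the relevant product processes, so a uniform SLLN yields almost sure uniform convergence of $\hat{\tilde\sigma}$ and of $D^d\hat{\tilde\sigma}$ to $\varsigma$ and $D^d\varsigma$. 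Assumption \textbf{(E2)} is then the continuous-mapping (square root) image of this convergence, and \textbf{(L)} follows from the quotient rule, since
\[
D^d\!\left(\frac{\varsigma}{\hat{\tilde\sigma}}\right)=\frac{D^d\varsigma\,\hat{\tilde\sigma}-\varsigma\,D^d\hat{\tilde\sigma}}{\hat{\tilde\sigma}^2}\longrightarrow\frac{D^d\varsigma\,\varsigma-\varsigma\,D^d\varsigma}{\varsigma^2}=0
\]
uniformly, using that $\varsigma$ is bounded away from zero on the compact set $\cS\times\cH$ by non-degeneracy of $\mathfrak{r}$.

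For the Gaussian assumptions, the hypothesis that $\mathfrak{r}$ has continuous partial derivatives up to order three guarantees, by standard Gaussian sample-path regularity, that $\cG(0,\mathfrak{r})$ admits almost surely $\cC^3$ (in particular $\cC^2$) sample paths, whose third derivatives have finite second $\cC(\cS\times\cH)$-moment by the integrability of continuous Gaussian processes; this gives \textbf{(G1)} and, via Remark~\ref{rem:GKFassumptions}, \textbf{(G3)}. The assumed rank-$2$ condition on the covariance matrices of the two limiting derivative fields is precisely the full-rank criterion of the remark following Theorem~\ref{thm:GenericAsymptoticCBs} (proved in Lemma~\ref{lemma:G2assumption}), and hence yields the non-degeneracy \textbf{(G2)} for $\cG(0,\mathfrak{r})$ on the two-dimensional domain. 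Finally, consistency of the LKC estimators follows from Theorem~\ref{thm:CovarianceConsistency} and Theorem~\ref{thm:LKCConsistency} applied on $\cS\times\cH$: the limiting residual process and its first partial derivatives are $(\cL^2,\delta)$-Lipschitz, again a consequence of $K\in\cC^3$ and the finite second $\cC(\cS)$-moments of $Z,\varepsilon$, so that $\hat\Lambda_N\to\Lambda(\cG)$ uniformly almost surely and the integral estimators \eqref{eq:L1L2IntegralEstimator} are consistent.

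The main obstacle I anticipate is the careful handling of the two interlocking limits: the sample size $N\to\infty$ and the possibly random, $N$-dependent number of design points $P=P(N)$. For each fixed $N$ the centred curves $\tilde Y_n$ carry a covariance that only converges to $\mathfrak{r}$ in the limit, so every uniform law-of-large-numbers step and every empirical-covariance-of-the-gradient argument must control this approximation simultaneously with the averaging over $n$, i.e. as a genuine double-limit and uniform-integrability argument rather than a direct quotation of the one-sample results of Section~\ref{sec:LKCestimation}. A secondary technical point is justifying differentiation under the design average and the resulting finite $\cC(\cS\times\cH)$-moments of the gradient product processes, which is exactly where the strengthening from the bare $\alpha$-H\"older continuity used in Theorem~\ref{thm:CLTscaleProcess} to the assumption $K\in\cC^3$ becomes essential.
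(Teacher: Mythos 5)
Your proposal is correct and follows essentially the same route as the paper: show that the smoothed error process $\tilde Z(s,h)=\tfrac{1}{P}\sum_p(\sigma(s_p)Z(s_p)+\varepsilon(s_p))K(s-s_p,h)$ and its derivatives inherit finite second $\cC(\cS\times\cH)$-moments and the $(\cL^2,\delta)$-Lipschitz property from $K\in\cC^3(\cS\times\cH)$, then invoke the one-sample machinery (Lemma \ref{Lem:UniformConvergence}, Corollary \ref{cor:LKCConsistency}, Theorem \ref{thm:LKCConsistency}) for \textbf{(L)}, \textbf{(E2)} and LKC consistency, and Lemma \ref{lemma:KLexpansion} together with Remark \ref{rem:GKFassumptions} for the regularity of $\cG(0,\mathfrak{r})$. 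The double-limit concern you raise is resolved in the paper exactly in the way you anticipate it must be: the moment and Lipschitz bounds are established uniformly over $N$, so the triangular-array versions of the uniform laws of large numbers go through.
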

	\begin{remark}
	 Assume the situation of Remark \ref{remark:nonRandS}. Then the assumption that $\mathfrak{r}$ has continuous partial derivatives up to order $3$ follows directly from the assumption $K\in\cC^3(\cS\times\cH)$.
	\end{remark}

    \section{Simulations}\label{sec:Simulations}
    \FloatBarrier
    We conduct simulations for evaluation of the performance of different methods for constructing SCBs for the population mean of a signal-plus-noise model. We show that the tGKF is a reliable, fast competitor to bootstrap methods, performing well even for small sample sizes. Additionally, the average width of the tGKF SCBs has less variance in our simulations than bootstrap SCBs. On the other hand, we show that among the bootstrap versions our proposed multiplier-$t$ bootstrap with Rademacher weights performs best for symmetric distributions.
    
    Simulations in this section are always based on $5,000$ Monte Carlo simulations in order to estimate the covering rate of the SCBs. We mostly compare SCBs constructed using the tGKF, the non-parametric bootstrap-$t$ (Boots-t) and the multiplier-t with Gaussian (gMult-t) or Rademacher (rMult-t) multipliers. We always use $5,000$ bootstrap replicates.
    
    \subsection{Coverage: Smooth Gaussian Case}\label{subsec:smoothgausssimulation}
    This first set of simulations deals with the most favourable case for the tGKF method. We simulate samples from the following smooth signal-plus-noise models \eqref{eq:ContModel} (call them Model $A$, $B$, $C$)
    \begin{align*}
	      \mathbf{Model~A:}~~ Y^A(s) &= \sin(8\pi s)  \exp(-3s) +  \frac{(0.6-s)^2+1}{6}\cdot \frac{ \mathbf{a}^T\mathbf{K}^A(s)}{\Vert \mathbf{K}^A(s) \Vert} \\
	      \mathbf{Model~B:}~~Y^B(s) &= \sin(8\pi s)  \exp(-3s) +  \frac{(0.6-s)^2+1}{6}\cdot  \frac{ \mathbf{b}^T \mathbf{K}^B(s)}{\Vert \mathbf{K}^B(s) \Vert} \\
	      \mathbf{Model~C:}~~Y^C(s) &= s_1s_2 + \frac{s_1+1}{s_2^2+1}\cdot \frac{ \mathbf{c}^T\mathbf{K}^C(s)}{\Vert \mathbf{K}^C(s) \Vert}\,,~~~s=(s_1,s_2)\in[0,1]^2
    \end{align*}
    with $\mathbf{a}\sim\cN(0, I_{7\times7})$, $\mathbf{b}\sim\cN(0, I_{21\times21})$ and $\mathbf{c}\sim\cN(0, I_{36\times36})$.  Moreover, the vector $\mathbf{K}^A(s)$ has entries $K^A_i(s) = \binom{6}{i}s^i(1-s)^{6-i}$ the $(i,6)$-th Bernstein polynomial for $i=0,...,6$, $\mathbf{K}^B(s)$ has entries $K^C_i(s) = \exp\big(-\frac{\left(s-x_i\right)^2}{2h_i^2}\big)$ with $x_i=i/21$, $h_i=0.04$ for $ i<10$, $h_{11}=0.2$ and $h_i=0.08$ for $ i>10$ and $\mathbf{K}^C(s)$ is the vector of all entries from the $6\times6$-matrix $K_{ij}(s) = \exp\big(-\frac{ \Vert s-x_{ij}\Vert^2 }{ 2h^2 }\big)$ with $x_{ij}= (i, j)/6$ with $h=0.06$.
    
  Examples of sample paths of the signal-plus-noise models and the error processes, as well as the simulation results are shown in Figure \ref{fig:ResultsSimulationSmoothGaussian}. We simulated samples from Model $A$ and $B$ on an equidistant grid of $200$ points of $[0,1]$. Model $C$ was simulated on an equidistant grid with $50$ points in each dimension.
  
  The simulations suggest that the tGKF method and the rMult-t outperforms all competitors, since even for small sample sizes they achieve the correct covering rates.
  \begin{figure}\centering
	\includegraphics[width=1\textwidth]{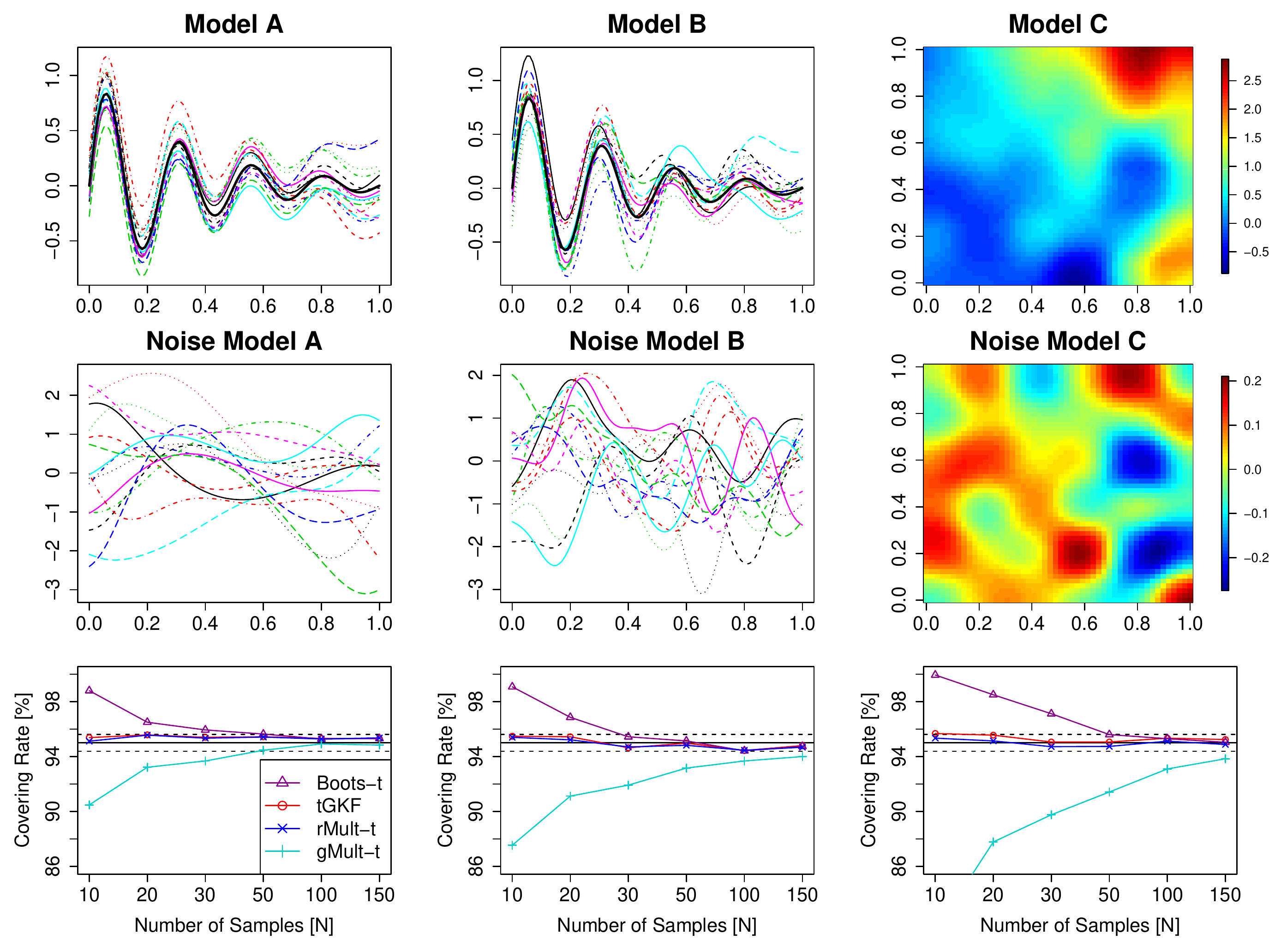}
	\caption{ Simulation result for smooth Gaussian processes. \textbf{Top row:} samples from the signal-plus-noise models. \textbf{Middle row:} samples from the error processes. \textbf{Bottom row:} simulated covering rates. The solid black line is the targeted level of the SCBs and the dashed black line are twice the standard error for a Bernoulli random variable with $p=0.95$.}
	  \label{fig:ResultsSimulationSmoothGaussian}
  \end{figure}
  \FloatBarrier
    \subsection{Coverage: Smooth Non-Gaussian Case}
    \FloatBarrier
    The tGKF method is based on a formula valid for Gaussian processes only. However, we have seen in Section \ref{sec:Asymptotics} that under appropriate conditions the covering is expected to be good asymptotically even if this condition is not fulfilled. For the simulations here we use Model $A$ with the only change that $\mathbf{a}$ has i.i.d. entries of a Student's $t_3/\sqrt{3}$ random variable, which is non-Gaussian, but still symmetric. A second simulation study tackles the question of whether non-Gaussian but close to Gaussian processes still will have reasonable covering. Here we use Model $B$ where $\mathbf{b}$ has i.i.d. entries of $(\chi^2_\nu -\nu)/\sqrt{2\nu}$ random variables for different parameters of $\nu$. These random variables are non-symmetric, but for $\nu\rightarrow\infty$ they converge to a standard normal.
      \begin{figure}\centering
	\includegraphics[width=1\textwidth]{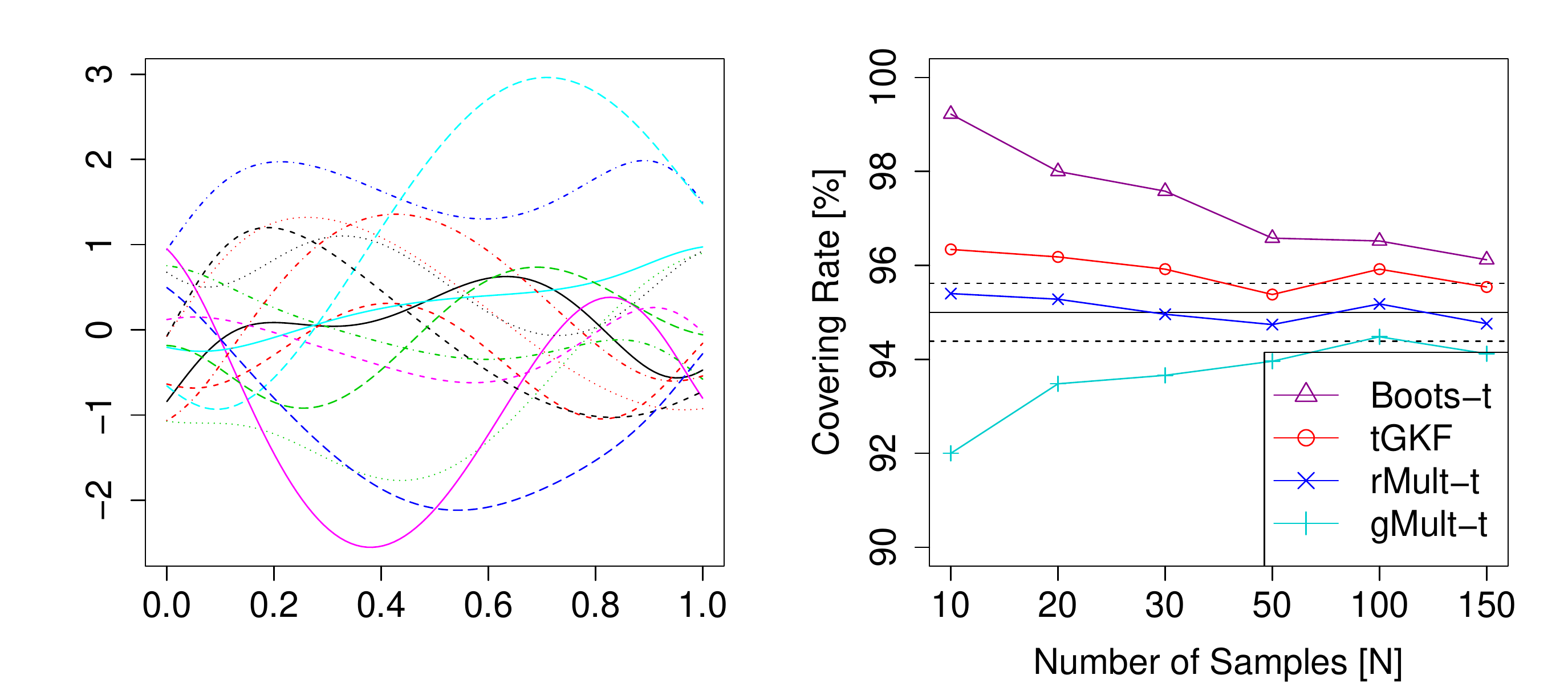}
	\caption{ Simulation result for smooth non Gaussian processes (Model A). \textbf{Left:} samples from the error processes. \textbf{Right:} simulated covering rates. The solid black line is the targeted level of the SCBs and the dashed black line are twice the standard error for a Bernoulli random variable with $p=0.95$.}
	  \label{fig:ResultsSimulationSmoothNonGaussian}
  \end{figure}
      \begin{figure}\centering
	\includegraphics[width=1\textwidth]{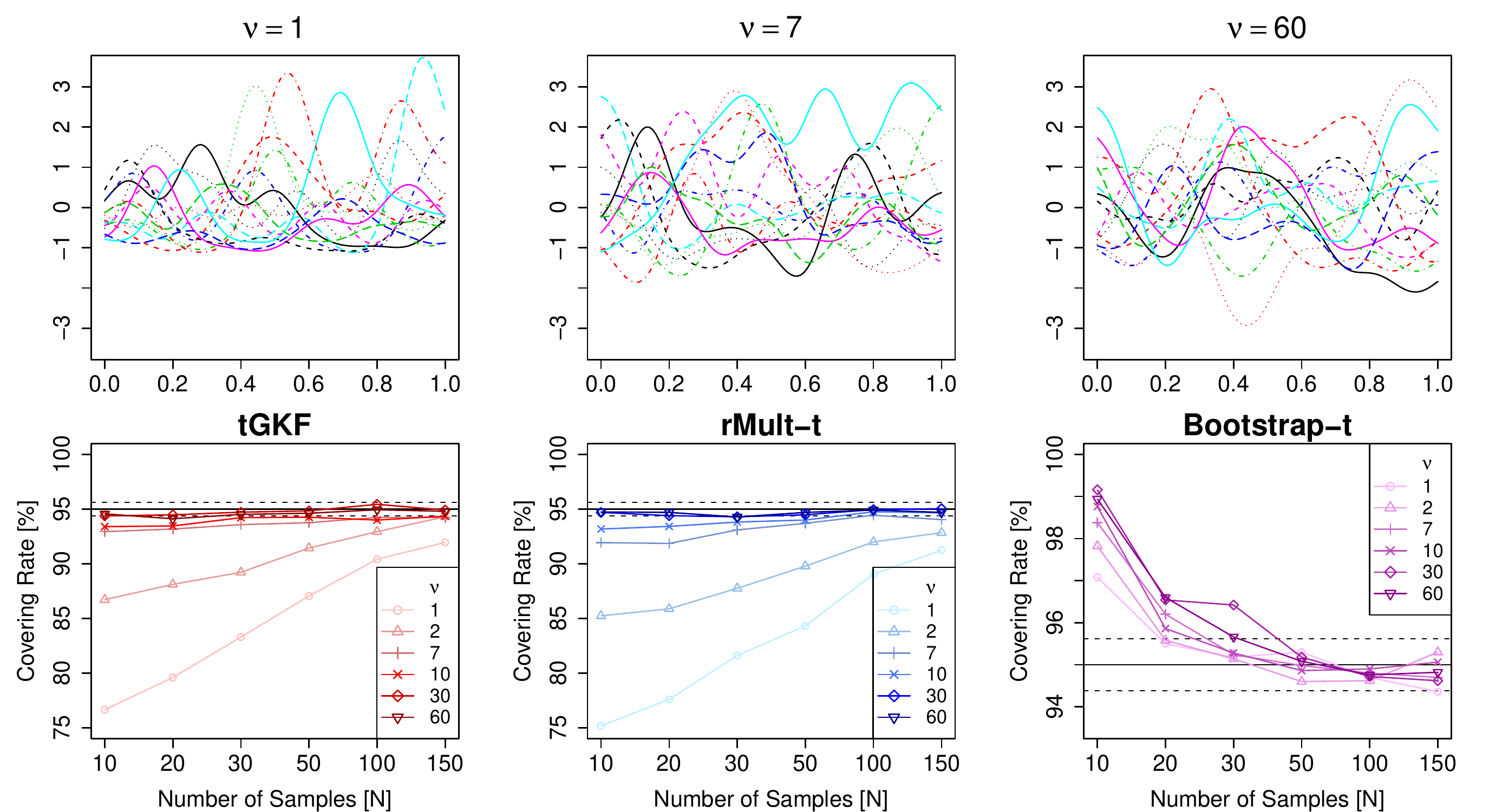}
	\caption{ Simulation result for smooth non Gaussian processes (Model B). \textbf{Left:} samples from the error processes. \textbf{Right:} simulated covering rates. The solid black line is the targeted level of the SCBs and the dashed black line are twice the standard error for a Bernoulli random variable with $p=0.95$.}
	  \label{fig:ResultsSimulationSmoothNonGaussian}
  \end{figure}
  
    Figure \ref{fig:ResultsSimulationSmoothNonGaussian} shows that in the symmetric distribution case of Model $A$ the tGKF has some over coverage for small sample sizes, but eventually it is in the targeted region for $95\%$ coverage. We observed that covering is usually quite close to the targeted covering for symmetric non-Gaussian distributions. The case of non-symmetric distributions, however, produces usually undercovering for the tGKF. However, as predicted by Theorem \ref{thm:GenericAsymptoticCBs} eventually for large $N$ it gets close to the targeted covering. In the case of non-symmetric distributions the bootstrap-t seems to perform better since it converges faster to the correct covering rate and under the symmetry condition the rMult-$t$ still works exceptionally good. This is assumed to be the case since it preserves all moments up to the fourthed.

    \FloatBarrier  
    \subsection{Average Width and Variance of Different SCBs}
    \FloatBarrier
    A second important feature of the performance of confidence bands is their average width and the variance of the width, since it is preferable to have the smallest possible width that still has the correct coverage. Moreover, the width of a SCB should remain stable meaning that its variance should be small. For the SCBs discussed in this article the width can be defined as twice the estimated quantile $\hat q_{N,\alpha}$ of the corresponding method. Thus, we provide simulations of quantile estimates for various methods for the Gaussian Model $B$ and the non-Gaussian Model $B$ with $\nu=7$. The optimal quantile $q_\alpha$ is simulated using a Monte-Carlo simulation with $50,000$ replications.
    
    We compare Degras' asymptotic method from the R-package \emph{SCBmeanfd}, the non-parametric bootstrap-$t$, multiplier-$t$, non parametric bootstrap and a simple multiplier bootstrap. Here the latter two methods use the variance of the original sample instead of its bootstrapped version, which we described in Section \ref{scn:bootsest}.
    
    Table \ref{table:GaussWidth} and \ref{table:NonGaussWidth} show the simulation results. We can draw two main conclusions from these tables. First, the tGKF method has the smallest standard error among the compared methods in the widths while still having good coverage (at least asymptotically in the non-Gaussian case). Second, the main competitor --the bootstrap-t-- has a much higher standard error in the width, but its coverage converges faster in the highly non-Gaussian and asymmetric case.

\begin{table}[ht]\footnotesize
\centering
\begin{tabular}{|l|cccccc|}
  \hline
 \textbf{N}& $ \mathbf{ 10 }$ & $ \mathbf{ 20 }$ & $ \mathbf{ 30 }$ & $ \mathbf{ 50 }$ & $ \mathbf{ 100 }$ & $ \mathbf{ 150 }$ \\ 
\textbf{true} & 4.118 & 3.382 & 3.211 & 3.081 & 2.993 & 2.96 \\   \hline
  \textbf{tGKF} & $ 3.980 \pm 0.033 $ & $ 3.368 \pm 0.011 $ & $ 3.204 \pm 0.006 $ & $ 3.084 \pm 0.004 $ & $ 3.000 \pm 0.002 $ & $ 2.973 \pm 0.001 $ \\ 
  \textbf{GKF} & $ 2.929 \pm 0.015 $ & $ 2.925 \pm 0.008 $ & $ 2.923 \pm 0.005 $ & $ 2.922 \pm 0.003 $ & $ 2.921 \pm 0.002 $ & $ 2.921 \pm 0.001 $ \\ 
  \textbf{Degras} & $ 2.866 \pm 0.021 $ & $ 2.881 \pm 0.010 $ & $ 2.887 \pm 0.007 $ & $ 2.890 \pm 0.005 $ & $ 2.892 \pm 0.003 $ & $ 2.896 \pm 0.003 $ \\ 
  \textbf{Boots} & $ 2.690 \pm 0.022 $ & $ 2.813 \pm 0.013 $ & $ 2.851 \pm 0.010 $ & $ 2.878 \pm 0.007 $ & $ 2.897 \pm 0.005 $ & $ 2.903 \pm 0.004 $ \\ 
  \textbf{Boots-t} & $ 5.728 \pm 0.507 $ & $ 3.609 \pm 0.050 $ & $ 3.292 \pm 0.020 $ & $ 3.109 \pm 0.009 $ & $ 2.998 \pm 0.005 $ & $ 2.968 \pm 0.004 $ \\ 
  \textbf{gMult} & $ 3.020 \pm 0.161 $ & $ 2.921 \pm 0.032 $ & $ 2.914 \pm 0.017 $ & $ 2.910 \pm 0.008 $ & $ 2.913 \pm 0.005 $ & $ 2.913 \pm 0.004 $ \\ 
  \textbf{gMult-t} & $ 3.428 \pm 0.046 $ & $ 3.091 \pm 0.017 $ & $ 3.011 \pm 0.011 $ & $ 2.960 \pm 0.007 $ & $ 2.931 \pm 0.004 $ & $ 2.925 \pm 0.004 $ \\ 
  \textbf{rMult} & $ 2.690 \pm 0.179 $ & $ 2.766 \pm 0.032 $ & $ 2.808 \pm 0.015 $ & $ 2.849 \pm 0.009 $ & $ 2.882 \pm 0.005 $ & $ 2.893 \pm 0.004 $ \\ 
  \textbf{rMult-t} & $ 4.100 \pm 0.122 $ & $ 3.382 \pm 0.023 $ & $ 3.202 \pm 0.013 $ & $ 3.078 \pm 0.008 $ & $ 2.993 \pm 0.005 $ & $ 2.965 \pm 0.004 $ \\ 
   \hline
\end{tabular}
\caption{Average of estimates $\hat q_{\alpha,N}$ of the width and twice its standard error for different methods of SCBs and the Gaussian Model $B$. The simulations are based on 1,000 Monte Carlo simulations.}
\label{table:GaussWidth}
\end{table}

\begin{table}[ht]\footnotesize
\centering
\begin{tabular}{|l|cccccc|}
  \hline
 \textbf{N}& $ \mathbf{ 10 }$ & $ \mathbf{ 20 }$ & $ \mathbf{ 30 }$ & $ \mathbf{ 50 }$ & $ \mathbf{ 100 }$ & $ \mathbf{ 150 }$ \\ 
\textbf{true} & 4.532 & 3.628 & 3.373 & 3.19 & 3.048 & 3.004 \\  \hline
  \textbf{tGKF} & $ 3.973 \pm 0.040 $ & $ 3.365 \pm 0.013 $ & $ 3.203 \pm 0.008 $ & $ 3.083 \pm 0.004 $ & $ 3.000 \pm 0.002 $ & $ 2.973 \pm 0.001 $ \\ 
  \textbf{GKF} & $ 2.926 \pm 0.019 $ & $ 2.922 \pm 0.009 $ & $ 2.922 \pm 0.006 $ & $ 2.921 \pm 0.004 $ & $ 2.921 \pm 0.002 $ & $ 2.921 \pm 0.001 $ \\ 
  \textbf{Degras} & $ 2.864 \pm 0.021 $ & $ 2.880 \pm 0.010 $ & $ 2.888 \pm 0.007 $ & $ 2.890 \pm 0.005 $ & $ 2.893 \pm 0.003 $ & $ 2.896 \pm 0.003 $ \\ 
  \textbf{Boots} & $ 2.698 \pm 0.024 $ & $ 2.822 \pm 0.014 $ & $ 2.858 \pm 0.010 $ & $ 2.882 \pm 0.007 $ & $ 2.900 \pm 0.005 $ & $ 2.905 \pm 0.004 $ \\ 
  \textbf{Boots-t} & $ 6.165 \pm 0.688 $ & $ 3.832 \pm 0.092 $ & $ 3.456 \pm 0.040 $ & $ 3.208 \pm 0.016 $ & $ 3.053 \pm 0.006 $ & $ 3.007 \pm 0.004 $ \\ 
  \textbf{gMult} & $ 3.034 \pm 0.194 $ & $ 2.931 \pm 0.036 $ & $ 2.918 \pm 0.019 $ & $ 2.912 \pm 0.009 $ & $ 2.914 \pm 0.005 $ & $ 2.913 \pm 0.004 $ \\ 
  \textbf{gMult-t} & $ 3.432 \pm 0.045 $ & $ 3.088 \pm 0.018 $ & $ 3.005 \pm 0.011 $ & $ 2.951 \pm 0.007 $ & $ 2.922 \pm 0.005 $ & $ 2.915 \pm 0.004 $ \\ 
  \textbf{rMult} & $ 2.769 \pm 0.234 $ & $ 2.758 \pm 0.039 $ & $ 2.798 \pm 0.018 $ & $ 2.838 \pm 0.009 $ & $ 2.873 \pm 0.005 $ & $ 2.888 \pm 0.004 $ \\ 
  \textbf{rMult-t} & $ 4.110 \pm 0.133 $ & $ 3.350 \pm 0.027 $ & $ 3.179 \pm 0.015 $ & $ 3.060 \pm 0.009 $ & $ 2.984 \pm 0.005 $ & $ 2.960 \pm 0.004 $ \\ 
   \hline
\end{tabular}
\caption{Average of estimates $\hat q_{\alpha,N}$ of the width and twice its standard error for different methods of SCBs and the non-Gaussian Model $B$ with $\nu=7$. The simulations are based on 1,000 Monte Carlo simulations.}
\label{table:NonGaussWidth}
\end{table}      
    
  \FloatBarrier
  \subsection{The Influence of Observation Noise}
    \FloatBarrier
  In order to study the influence of observation noise, we evaluate the dependence of the covering rate of the smoothed mean of a signal-plus-noise model with added i.i.d. Gaussian observation noise on the bandwidth and the standard deviation of the observation noise.
    \begin{figure}\centering
		\includegraphics[width=1\textwidth]{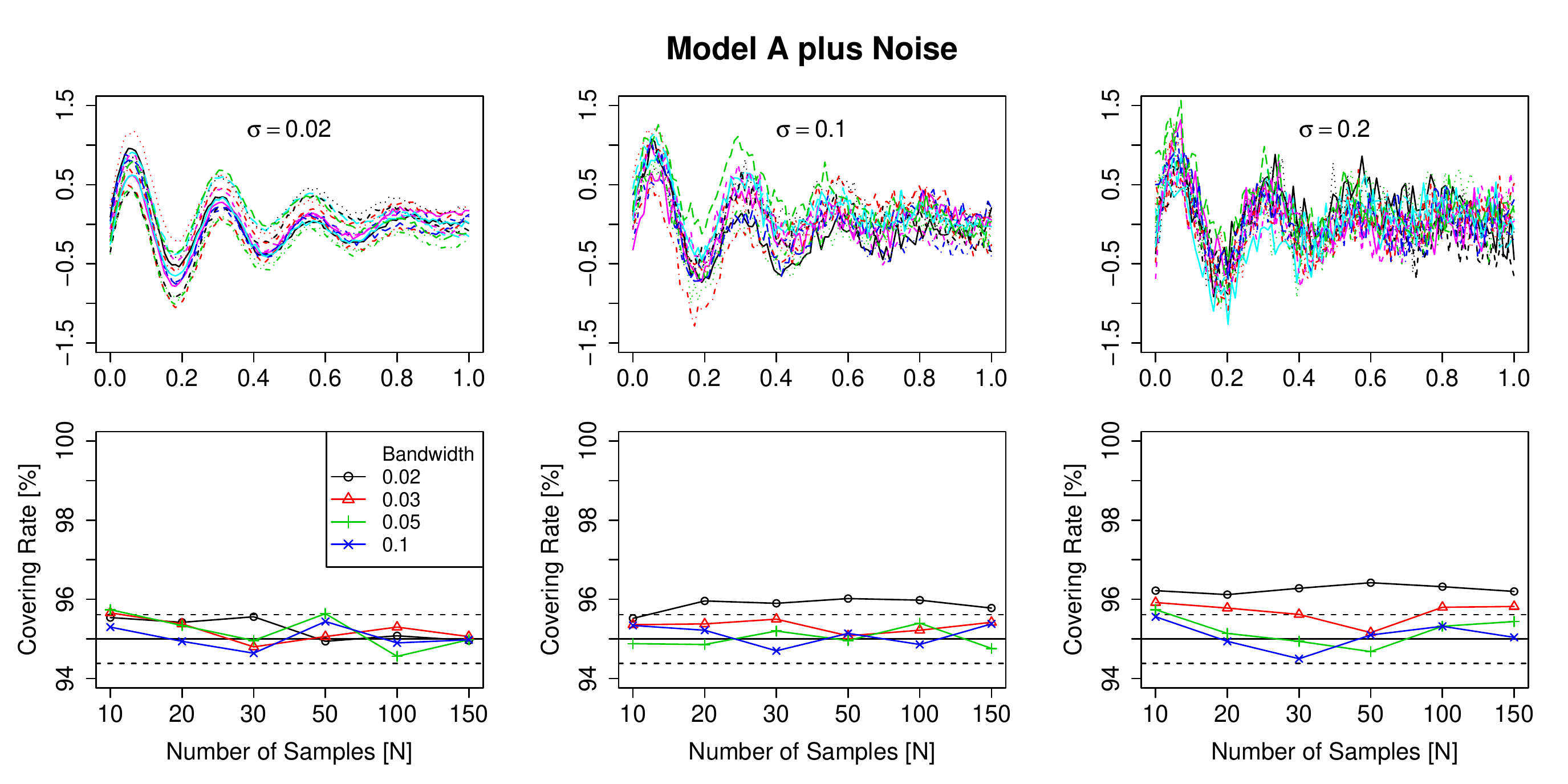}
	\caption{ Simulation results for Gaussian processes (Model A) with observation noise. \textbf{Top row:} samples from the error processes. \textbf{Bottom row:} simulated covering rates. The solid black line is the targeted level of the SCBs and the dashed black line are twice the standard error for a Bernoulli random variable with $p=0.95$.}
	  \label{fig:ResultsSimulationObsGaussianA}
  \end{figure}
  \begin{figure}\centering
		\includegraphics[width=1\textwidth]{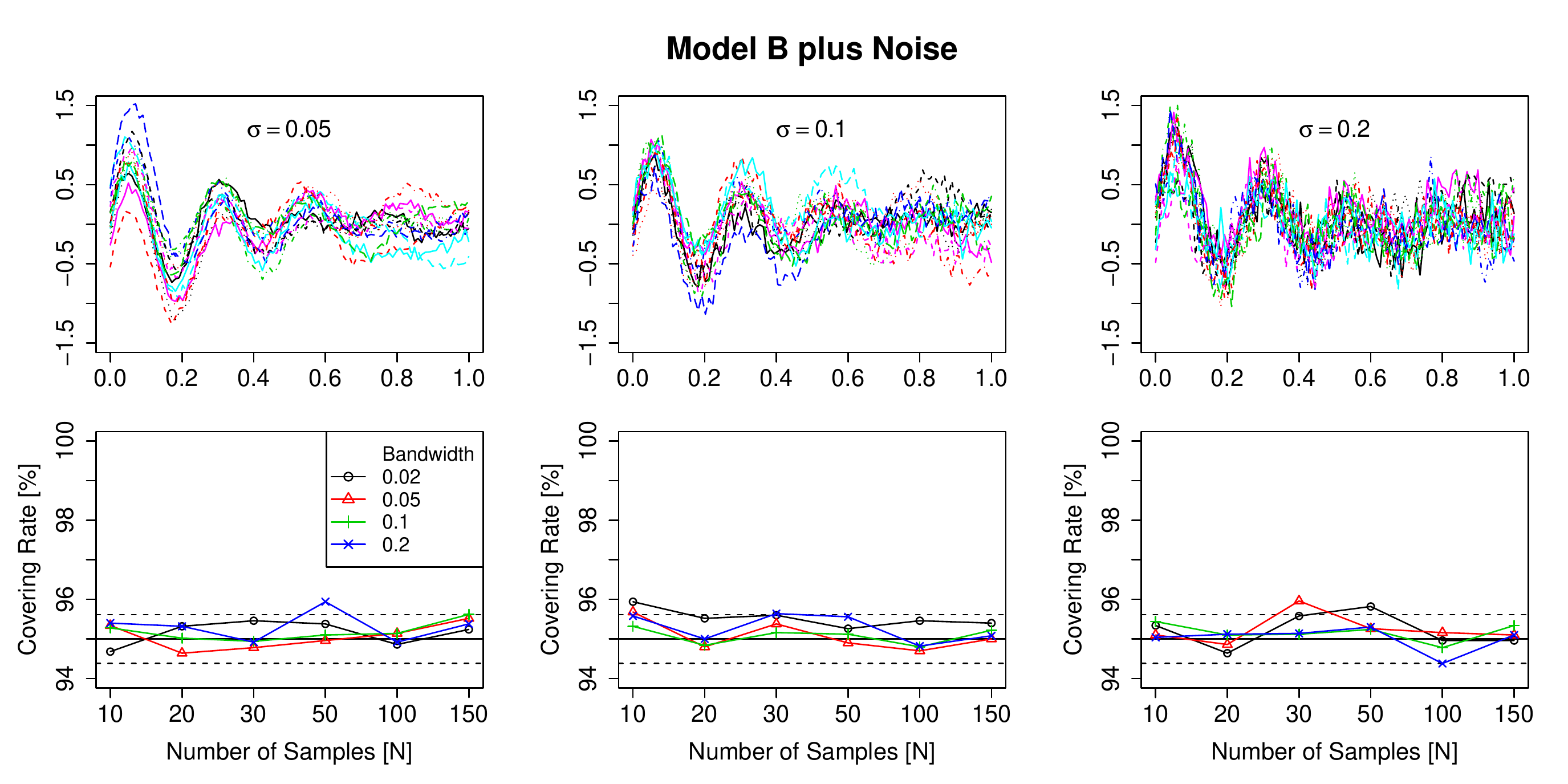}
	\caption{ Simulation results for Gaussian processes (Model B) with observation noise. \textbf{Top row:} samples from the error processes. \textbf{Bottom row:} simulated covering rates. The solid black line is the targeted level of the SCBs and the dashed black line are twice the standard error for a Bernoulli random variable with $p=0.95$.}
	  \label{fig:ResultsSimulationObsGaussianB}
  \end{figure}  
    \begin{figure}\centering
	\includegraphics[width=1\textwidth]{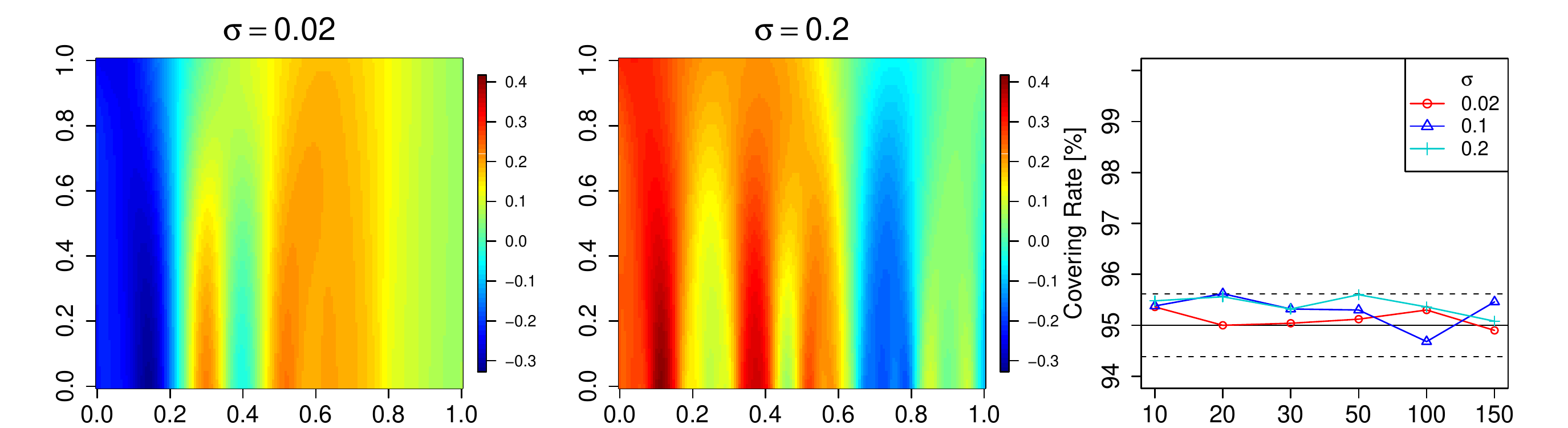}
	\caption{ Simulation results for the Scale process from the Gaussian processes of Model B with added observation noise. \textbf{Left two panels:} samples from the error processes. \textbf{Right panel:} simulated covering rates. The solid black line is the targeted level of the SCBs and the dashed black line are twice the standard error for a Bernoulli random variable with $p=0.95$.}
	  \label{fig:ResultsSimulationObsGaussianScaleB}
  \end{figure}
  For the simulations we generate samples from the Gaussian Model A and Model B on an equidistant grid of $[0,1]$ with 100 points and add $\cN(0,\sigma)$-distributed independent observation noise with $\sigma\in\{0.02, 0.1, 0.2\}$. Afterwards we smooth the samples with a Gaussian kernel with bandwidths $h\in\{0.02,0.03, 0.05,0.1\}$. The smoothed curves are evaluated on a equidistant grid with 400 points. The results of these simulations are shown in Figure \ref{fig:ResultsSimulationObsGaussianA} and \ref{fig:ResultsSimulationObsGaussianB}.
  
  We also study the covering rate of the population mean of the scale process of Model B. The generation of the samples is the same as for the previous simulations. The only difference is that instead of smoothing with one bandwidth we construct the scale space surface. Here we use a equidistant grid of 100 points of the interval $[0.02,0.1]$. The results can be found in Figure \ref{fig:ResultsSimulationObsGaussianScaleB}.
  
  \FloatBarrier
  \subsection{SCBs for the Difference of Population Means of Two Independent Samples}
  \FloatBarrier
  Since we previously studied the single sample case in great detail, we will only present the case of one dimensional smooth Gaussian noise processes in the two sample scenario. Moreover, we only report the results for the tGKF approach. The previous observations regarding the other methods, however, carry over to this case.
  
 The simulations are designed as follows. We generate two samples of sizes $N$ and $M$ such that $N/M=c\in\{1,2,4\}$. We are interested in four different scenarios. The first scenario is the most favourable having the same correlation structure and the same variance function. Here we use for both samples the Gaussian Model $A$ from subsection \ref{subsec:smoothgausssimulation}. In all remaining scenarios one of the samples will always be this error process. In order to check the effect of the two samples having different correlation structures, we use Gaussian Model $B$ as the second sample from subsection \ref{subsec:smoothgausssimulation}. For dependence on the variance, while the correlation structure is the same, we change the variance function in the Gaussian Model $A$ to $\sigma^2(s)=0.04$ for the second sample. As an example process where both the correlation and the variance are different we use Gaussian Model $B$ with the modification that the error process has pointwise variance $\sigma^2(s)=0.04$. The results of these simulations are shown in Figure \ref{fig:2sampleSCBresults}
    \begin{figure}\centering
	\includegraphics[width=1\textwidth]{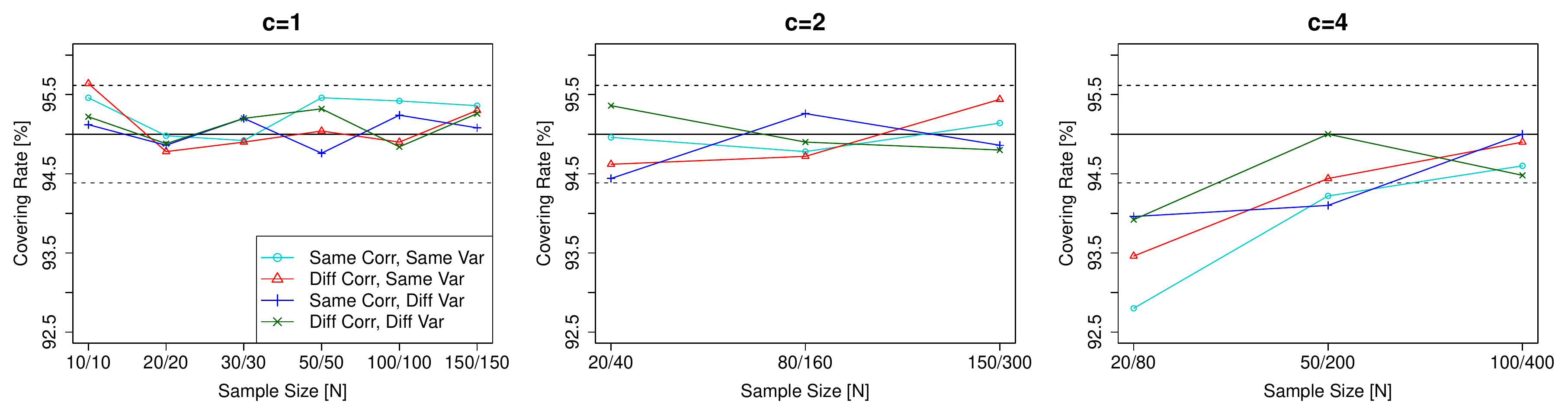}
	\caption{Simulation results for the SCBs of the difference between mean curves of two samples of smooth Gaussian processes. The solid black line is the targeted level of the SCBs and the dashed black line are twice the standard error for a Bernoulli random variable with $p=0.95$.}
	  \label{fig:2sampleSCBresults}
  \end{figure} 
  
  \FloatBarrier  
  \section{Application to DTI fibers}\label{sec:DTI}
   \FloatBarrier
 We apply our method to a data set studying the impact of the eating disorder anorexia nervosa on white-matter tissue
 properties during adolescence in young females. The experimental setup and a different methodology to statistically analyze the data using pointwise testing and permutation tests can be found in the original article \cite{Travis2015}. The data set consists of $15$ healthy volunteers -- a control group -- and $15$ patients. For each volunteer $27$ different neural fibers were extracted.
 
In order to locate differences in the DTI fibers we use two-sample $95\%$-SCBs for the difference in the population mean between the control group and the patients for each fiber over the domain $\cS=[0,100]$. Robustness of detected differences across scales is tested by computing also the SCBs for a difference between the population mean of the scale processes as proposed in Section \ref{sec:ObsNoise}. For the latter we used a Gaussian smoothing kernel  and the considered bandwidth range $\cH=[1.5,10]$ is sampled at $200$ equidistant bandwidths.

The results for the three fibers, where we detect significant differences, are shown in Figure \ref{fig:ResultsDTI}. Our results are mostly consistent with the results from \citep{Travis2015} in the sense that we also detect significant differences in the right thalamic radiation and the left SLF. We also detect differences in the  right cingulum hippocampus, which Travis et al did not detect. However, they also found differences for other fibers. Here we have to note that they used several criteria for claiming a significant difference, which did not take simultaneous testing and the correlation structure of this data into account and therefore might be false positives.  
    \begin{figure}\centering
		\includegraphics[width=1\textwidth]{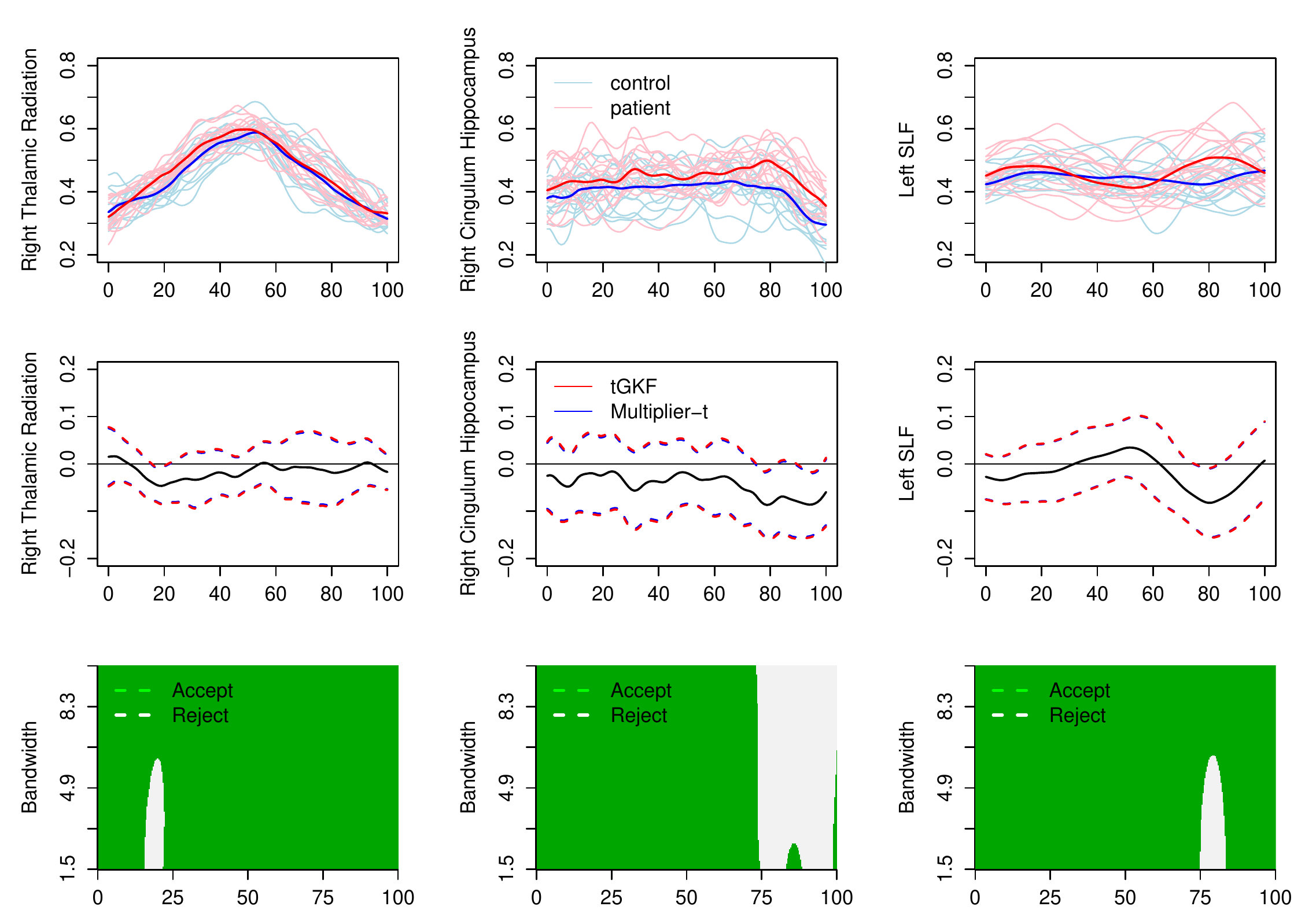}
	\caption{ DTI fibers with significant differences in the mean function. \textbf{Top row:} data and sample means. \textbf{Middle row:} estimated difference of the mean function and $95\%$-SCBs constructed using the tGKF and the Multiplier-$t$. \textbf{Bottom row:} areas where the confidence bands for the difference in the mean function for the scale process do not include 0.}
	  \label{fig:ResultsDTI}
  \end{figure}
 
 \paragraph{Acknowledgments}
  The authors are very thankful to Alex Bowring and Thomas Nichols for proposing to try Rademacher multipliers instead of Gaussian multipliers, which stimulated some further exploration on the multipliers and improved the performance of the bootstrap method considerably. We are also grateful for the in detail proof reading of Sam Davenport and especially for detection of an error in Theorem $2$. 
  
  \FloatBarrier
  \appendix
  \section{Proofs}
      \subsection{Proof of Proposition \ref{prop:redundancy}}
      Assume that $Z$ is $(\cL^p, \delta)$-Lipschitz, then using convexity of $\vert \cdot \vert^p$ we compute
      \begin{align*}
       \EE\left[ \max_{s\in\cS} \vert Z(s) \vert^p \right] &\leq 2^{p-1}\EE\left[ \max_{s\in\cS} \vert Z(s)-Z(s') \vert^p \right] + 2^{p-1} \EE\left[ \max_{s\in\cS} \vert Z(s') \vert^p \right] \\
       &\leq
2^{p-1}\EE\left[ \vert A\vert ^p \right]\max_{s\in\cS} \delta(s,s')^p + 2^{p-1} \EE\left[ \vert Z(s') \vert^p \right] <\infty\,.
      \end{align*}
      Hence $Z$ has also a finite $p$th $\cC(\cS)$-moment.
      
	  \subsection{Proof of Claim in Remark \ref{rem:GKFassumptions}}
	  Using the multivariate mean value theorem and the Cauchy-Schwarz inequality yields
	  \begin{equation*}
		\Big\vert D^{(d,l)}\cG(s) - D^{(d,l)}\cG(s') \Big\vert^2 \leq \max_{t\in \cS} \big\Vert \nabla D^{(d, l)}\cG(t) \big\Vert^2   \Vert s-s'\Vert^2\,.
	  \end{equation*}
	  Applying the expectation to both sides and then taking the maximum of the resulting sums we obtain
	  \begin{equation*}
		\EE\left[ \big\vert  D^{(d,l)}\cG(s) - D^{(d,l)}\cG(s') \big \vert^2\right] \leq D \max_{k=1,...,D} \EE\left[ \max_{t\in \cS} \big\vert D^{(d, l,k)}\cG(t) \big\vert^2  \right] \Vert s-s'\Vert^2\,.
	  \end{equation*}
	  The proof follows now from the following two observations. Firstly, by Remark \ref{rem:boundedmoment} each of the expectations we take the maximum of is finite, since all components of the gradient of $\nabla D^{(d,l)}\cG$ are Gaussian processes with almost surely continuous sample paths. Secondly, $\vert \log\Vert x \Vert \vert^{-2} \geq x^2$ for all $0 < x<1$.

	  \subsection{Proof of Theorem \ref{thm:consistencyquantile}}
	  \begin{lemma}
	\label{lemma:inverse-delta-method}
	Let the function $\hat{f}_{N}(u)$ and its first derivative $\hat{f}'_{N}(u)$ be uniformly consistent estimators of the function $f(u)$ and its first derivative $f'(u)$, respectively, where both are uniformly continuous over $u\in\R$. Assume there exists an open interval $I=(a, b)$ such that $f$ is strictly monotone on $I$ and there exists a unique solution $u_0\in I$ to the equation $f(u)=0$. Define $\hat{u}_{N} = \sup \{u\in I: \hat{f}_{N}(u) = 0\}$. Then $\hat{u}$ is a consistent estimator of $u_0$.
\end{lemma}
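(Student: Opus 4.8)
The plan is to treat the statement as a deterministic fact about zero sets: the assertion is that the map sending a uniformly convergent sequence of functions to the supremum of its zero set is continuous at $f$. I would therefore argue pathwise, so that the conclusion holds in whatever mode (almost surely, or in probability) the convergences $\hat f_N\to f$ and $\hat f_N'\to f'$ hold; in the application to Theorem~\ref{thm:consistencyquantile} this is the almost sure mode. First I would reduce to the case that $f$ is strictly \emph{increasing} on $I$: if it is decreasing, replace the pair $(f,\hat f_N)$ by $(-f,-\hat f_N)$, which alters neither the zero sets nor the hypotheses. Note also that $\hat f_N$ is continuous, being differentiable with derivative $\hat f_N'$; this is what makes the intermediate value theorem available and is essentially the only place where the derivative is genuinely needed for the consistency claim.

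The core is then a sign-change localisation. Fix $\epsilon>0$ small enough that $[u_0-\epsilon,u_0+\epsilon]\subset I$. Because $f$ is strictly increasing with $f(u_0)=0$, the quantity
\begin{equation*}
\delta := \min\bigl\{-f(u_0-\epsilon),\, f(u_0+\epsilon)\bigr\}
\end{equation*}
is strictly positive. By uniform consistency, eventually $\Vert \hat f_N - f\Vert_\infty < \delta/2$. On that event $\hat f_N(u_0-\epsilon) < -\delta/2 < 0$ while $\hat f_N(u_0+\epsilon) > \delta/2 > 0$, so by the intermediate value theorem $\hat f_N$ has a zero in $(u_0-\epsilon,u_0+\epsilon)$, whence $\hat u_N > u_0-\epsilon$. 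For the matching upper bound I would use monotonicity to control the far part of the interval: for every $u\in[u_0+\epsilon,b)$ we have $f(u)\ge f(u_0+\epsilon)\ge \delta$, hence $\hat f_N(u) > \delta/2 > 0$, so $\hat f_N$ has no zeros on $[u_0+\epsilon,b)$ and therefore $\hat u_N \le u_0+\epsilon$. Combining, $|\hat u_N-u_0|\le \epsilon$ eventually; since $\epsilon$ was arbitrary, $\hat u_N\to u_0$.

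The step I expect to be the real obstacle is the upper bound, that is, ruling out spurious zeros of $\hat f_N$ far to the right of $u_0$. Because $\hat u_N$ is the \emph{supremum} of the zero set, a single stray zero near the right endpoint $b$ would destroy consistency, and excluding it requires the consistency of $\hat f_N$ to be genuinely \emph{uniform} over the whole interval rather than merely local about $u_0$; this is exactly the hypothesis that $\hat f_N$ estimates $f$ uniformly over $\R$, and in Theorem~\ref{thm:consistencyquantile} it is the delicate point, since the $t_{N-1}$ Euler characteristic densities and their Gaussian limits disagree in the tails. Finally, although continuity is all that the derivative hypothesis is strictly used for above, the stronger assumption $\hat f_N'\to f'$ is precisely what one would invoke to upgrade consistency to a convergence rate: a mean value expansion $\hat f_N(u_0)=\hat f_N'(\xi_N)(u_0-\hat u_N)$ together with $\hat f_N(u_0)\to f(u_0)=0$ yields $u_0-\hat u_N\to 0$ quantitatively as soon as $\hat f_N'(\xi_N)$ is bounded away from zero, which holds whenever $f'(u_0)\ne 0$.
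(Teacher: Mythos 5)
Your proposal is correct and follows essentially the same route as the paper's proof: a guaranteed sign change of $\hat f_N$ across $(u_0-\epsilon,u_0+\epsilon)$ forces a root there, while uniform closeness plus the strict monotonicity of $f$ (which bounds $|f|$ away from zero off that neighbourhood) rules out spurious roots, so the supremum of the zero set is trapped near $u_0$. The only cosmetic differences are that you argue pathwise rather than with probabilities tending to one, and you exclude stray zeros only to the right of $u_0+\epsilon$ (which indeed suffices for the supremum), whereas the paper excludes them on all of $I\setminus(u_0-\epsilon,u_0+\epsilon)$.
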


\begin{proof}
        Assume w.l.o.g. that $f$ is strictly decreasing on $I$. Thus, for any $\varepsilon>0$ we have $f(u_0-\varepsilon)>0>f(u_0+\varepsilon)$ by $f(u_0)=0$. The assumption that $\hat{f}_{N}(u)$ is a consistent estimator of $f(u)$ yields
		\begin{equation*}\label{g-hat-root}
            \Prb\left( \hat{f}_{N}(u_0-\varepsilon)>0>\hat{f}_{N}(u_0+\varepsilon) \right) \to 1,
		\end{equation*}
		which implies that with probability tending to 1, there is a root of $\hat{f}$ in $I_{0,\varepsilon} = (u_0-\varepsilon, u_0+\varepsilon)$. On the other hand the monotonicity of $f$ guarantees the existence of an $\delta>0$ such that $\inf\{ |f(u)|:~u\in I\setminus I_{0,\varepsilon}\} > \delta$.
		Moreover, by the uniform consistency of $\hat{f}$, we have that
		\[
            \Prb\left( \sup_{u\in I} |\hat{f}_{N}(u) - f(u)|< \delta/2 \right) \to 1.
		\]
		Therefore, using the inequality
		\[
            \inf_{u\in I\setminus I_{0,\varepsilon}} |\hat{f}_{N}(u)|
            \ge \inf_{u\in I\setminus I_{0,\varepsilon}} |f(u)| -
            \sup_{u\in I\setminus I_{0,\varepsilon}} |\hat{f}_{N}(u) - f(u)|,
		\]
		we can conclude that
		\begin{equation*}\label{g-hat-no-root}
		\begin{aligned}
            \Prb\left( \inf_{u\in I\setminus I_{0,\varepsilon}} |\hat{f}_{N}(u)| > \delta/2 \right) 
            &\ge \Prb\left( \inf_{u\in I\setminus I_{0,\varepsilon}} |f(u)| -
            \sup_{u\in I\setminus I_{0,\varepsilon}} |\hat{f}_{N}(u) - f(u)| > \delta/2 \right) \\
            &= \Prb\left( \sup_{u\in I\setminus I_{0,\varepsilon}} |\hat{f}_{N}(u) - f(u)| < \inf_{u\in I\setminus I_{0,\varepsilon}} |f(u)| - \delta/2 \right)
            \to 1\,,
		\end{aligned}
		\end{equation*}
		which implies that with probability tending to 1, there is no root of $\hat{f}_{N}$ outside $I_{0,\varepsilon}$. Hence from the definition of $\hat{u}_{N}$, it is clear that $\hat{u}_{N}$ is the only root of $\hat{f}_{N}$ in $I$ with probability tending to 1. As an immediate consequence we have that
		\[
		\Prb[|\hat{u}_{N}-u_0|<\varepsilon] = \Prb[\hat{u}_{N} \in I_{0,\varepsilon}] \to 1,
		\]
		which finishes the proof that $\hat{u}$ is a consistent estimator of $u_0$.
\end{proof}

	  \begin{lemma}\label{lemma:ConvergenceQuantile}
		    Let $\hat  L_d^N$ be a consistent estimator of $L_d$ and $\mathrm{EEC}_{\cG}(u)$ given in equation \eqref{eq:ualphaGauss}.
		    \begin{enumerate}
		      \item $\left\Vert \mathrm{EEC}_{\cG}(u) -  \widehat{\mathrm{EEC}}_{t_{N-1}}(u) \right\Vert_\infty\xrightarrow{N\rightarrow\infty} 0$ almost surely.
		      \item $\left\Vert \mathrm{EEC'}_{\cG}(u) -  \widehat{\mathrm{EEC'}}_{t_{N-1}}(u) \right\Vert_\infty\xrightarrow{N\rightarrow\infty} 0$ almost surely.
	    \end{enumerate}
	  \end{lemma}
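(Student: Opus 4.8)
The plan is to reduce both claims to two ingredients: the almost-sure consistency $\hat L_d^N\to L_d$ supplied by hypothesis, and the purely deterministic uniform convergence of the $t_{N-1}$ Euler characteristic densities to their Gaussian counterparts. Since the quantities $L_0(\cS),\dots,L_D(\cS,\cG)$ and the densities $\rho_d^{t_{N-1}},\rho_d^{\cG}$ are nonrandom, I would first write the telescoping decomposition
\begin{equation*}
\widehat{\mathrm{EEC}}_{t_{N-1}}(u)-\mathrm{EEC}_{\cG}(u)=L_0(\cS)\big(\rho_0^{t_{N-1}}(u)-\rho_0^{\cG}(u)\big)+\sum_{d=1}^D L_d\big(\rho_d^{t_{N-1}}(u)-\rho_d^{\cG}(u)\big)+\sum_{d=1}^D\big(\hat L_d^N-L_d\big)\rho_d^{t_{N-1}}(u)\,,
\end{equation*}
and pass to $\Vert\cdot\Vert_\infty$ by the triangle inequality. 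This shows it suffices to establish, for every $d\in\{0,\dots,D\}$, that (a) $\Vert\rho_d^{t_{N-1}}-\rho_d^{\cG}\Vert_\infty\to0$ and (c) $\sup_{N\ge N_0}\Vert\rho_d^{t_{N-1}}\Vert_\infty<\infty$; combined with (b) the hypothesised $\hat L_d^N\to L_d$ almost surely, the last sum then vanishes almost surely, while the first two terms are deterministic. Only (b) carries randomness, which is why the ``almost surely'' in the statement enters solely through the consistency of the LKC estimators.

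For (a) with $d=0$ I would use $\rho_0^{t_{N-1}}(u)=1-F_{t_{N-1}}(u)$ and $\rho_0^{\cG}(u)=1-\Phi(u)$, so that $t_{N-1}\Rightarrow\cN(0,1)$ together with the continuity of $\Phi$ and P\'olya's theorem yields $\Vert\rho_0^{t_{N-1}}-\rho_0^{\cG}\Vert_\infty=\sup_u|F_{t_{N-1}}(u)-\Phi(u)|\to0$. For $d\ge1$ I would invoke the explicit formulas from \citet{TaylorWorsley2007}, writing $\rho_d^{t_{N-1}}(u)=p_{d,N}(u)\,\kappa_N(u)$, where $p_{d,N}$ is a polynomial of degree $d-1$ whose coefficients converge to those of the Hermite polynomial appearing in $\rho_d^{\cG}$, and $\kappa_N(u)=c_N\,(1+u^2/(N-1))^{-(N-2)/2}$ with $c_N$ converging to the Gaussian normalising constant. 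On a compact interval $[-M,M]$ the formula is jointly continuous in $(u,1/(N-1))$ up to the boundary $1/(N-1)=0$, where it equals $\rho_d^{\cG}$; compactness then gives $\sup_{|u|\le M}|\rho_d^{t_{N-1}}-\rho_d^{\cG}|\to0$. Splitting $\R=[-M,M]\cup\{|u|>M\}$ and letting $M\to\infty$ reduces both (a) and (c) to a single uniform tail estimate.

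The main obstacle is exactly this tail estimate: I must show the family $\{\rho_d^{t_{N-1}}\}_{N\ge N_0}$ vanishes at infinity uniformly in $N$, i.e. $\sup_{N\ge N_0}\sup_{|u|>M}|\rho_d^{t_{N-1}}(u)|\to0$ as $M\to\infty$. Using $|p_{d,N}(u)|\le C(1+|u|^{d-1})$ and $c_N\le C$ (both uniform once the coefficients converge), it is enough to control $(1+|u|^{d-1})(1+u^2/(N-1))^{-(N-2)/2}$. Here I would separate the regime $u^2<N-1$, where $\log(1+u^2/(N-1))\ge u^2/(2(N-1))$ produces a Gaussian-type bound $\le(1+|u|^{d-1})e^{-u^2/8}$ uniform in $N$, from the regime $u^2\ge N-1$, where a direct estimate (the map $N\mapsto(1+u^2/(N-1))^{-(N-2)/2}$ being decreasing on this range) shows the supremum over $N\ge N_0$ is attained at $N=N_0$ and is of order $|u|^{\,d-N_0+1}$. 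Choosing $N_0>d+1$ makes this power negative, so both regimes tend to $0$ as $|u|\to\infty$; this is the precise point at which the polynomial degree $d-1$ must be dominated by the exponent of the $t$-kernel, and it pins down the threshold $N_0$. The same estimate immediately gives the uniform boundedness (c).

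Finally, part~2 for the derivatives follows by the identical argument: differentiating $\rho_d^{t_{N-1}}$ and $\rho_d^{\cG}$ preserves the polynomial-times-$t$-kernel and Hermite-times-Gaussian structure, merely raising the polynomial degree by one, so I would repeat (a)--(c) with the mildly stronger threshold $N_0>d+2$ to absorb the extra degree in the tail estimate, and conclude $\Vert\mathrm{EEC}'_{\cG}-\widehat{\mathrm{EEC}}'_{t_{N-1}}\Vert_\infty\to0$ almost surely.
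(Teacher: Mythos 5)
Your proposal is correct, and it shares the paper's high-level structure: split off the randomness via the triangle inequality (so that only the consistency of $\hat L_d^N$ carries the ``almost surely''), and reduce everything to the deterministic uniform convergence $\Vert\rho_d^{t_{N-1}}-\rho_d^{\cG}\Vert_\infty\to0$. Where you differ is in how that deterministic convergence is established. The paper cites Worsley (1994, Theorem 5.4), which reduces uniform convergence of the EC densities to uniform convergence of the single kernel $(1+u^2/\nu)^{-(\nu-1)/2}\to e^{-u^2/2}$, and then proves the latter by a Dini-type argument: the difference $h_\nu(u)=(1+u^2/\nu)^{-(\nu-1)/2}-e^{-u^2/2}$ is nonnegative, monotonically decreasing in $\nu$, pointwise convergent to zero, and vanishes at $\pm\infty$, so its maximum $C_\nu$ decreases to zero. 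You instead work directly with the polynomial-times-kernel form of $\rho_d^{t_{N-1}}$, get local uniform convergence on compacts from joint continuity, and control the tails by the explicit split $u^2<N-1$ (Gaussian-type bound $e^{-u^2/8}$) versus $u^2\ge N-1$ (monotonicity in $N$ plus a power bound $|u|^{d+1-N_0}$, forcing $N_0>d+1$). Your route is more self-contained and quantitative — it does not lean on Worsley's reduction and makes the interplay between the polynomial degree and the $t$-kernel decay explicit — at the cost of more bookkeeping; the paper's route is shorter but outsources the polynomial prefactors to the cited theorem. Both treatments handle the derivative in part 2 analogously, noting that differentiation changes the kernel exponent from $-(\nu-1)/2$ to $-(\nu+1)/2$ and introduces the factor $\tfrac{\nu-1}{\nu}$ (your phrase ``merely raising the polynomial degree by one'' slightly understates this, but your tail estimate absorbs the change without difficulty).
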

	  \begin{proof}
			Part 1. is a direct consequence of the consistency of the LKC estimates and the observation that the EC densities $\rho^{t_\nu}$ of a ${t_\nu}$-process with $\nu=N-1$ degrees of freedom converges uniformly to the EC densities of a Gaussian process $\rho^\cG$ as $N$ tends to infinity, i.e. $$\lim_{\nu\rightarrow\infty}\max_{u\in\R} \vert \rho^{t_\nu}(u)-\rho^\cG(u) \vert=0.$$
			The latter follows from \citet[Theorem 5.4]{Worsley1994}, which implies that the uniform convergence of EC densities is implied by the uniform convergence of
			\begin{equation*}
				    \lim_{\nu\rightarrow\infty} \max_{u\in\R} \left\vert \left(1+\tfrac{u^2}{\nu}\right)^{-\tfrac{\nu-1}{2}} - e^{-\tfrac{u^2}{2}} \right\vert = 0\,.
			\end{equation*}
			To see this, note that the distance
			\begin{equation*}
				      h_\nu(u) = \left(1+\tfrac{u^2}{\nu}\right)^{-\tfrac{\nu-1}{2}} - e^{-\tfrac{u^2}{2}} \geq 0\,,~~\text{ for }u\in\R
			\end{equation*}
			fulfills $\lim_{u\rightarrow\pm\infty} h_{\nu}(u)=0$. Thus, there is a $C_\nu = \max_{u\in\R} \vert h_\nu(u)\vert $ by continuity of $h_\nu$. Moreover, note that $h_\nu(u)\geq h_{\nu+1}(u)$ for $\nu\geq1$, all $u\in\R$ and $\lim_{\nu\rightarrow\infty}h_\nu(u)=0$. Hence, $C_\nu$ converges to zero for $\nu\rightarrow\infty$.
			
			 Part 2. uses the same ideas. The only difference is that we have to show that the function
			\begin{equation*}
				      h_\nu(u) = \tfrac{\nu-1}{\nu}\left(1+\tfrac{u^2}{\nu}\right)^{-\tfrac{\nu+1}{2}} - e^{-\tfrac{u^2}{2}}\,,~~\text{ for }u\in\R
			\end{equation*}			
			converges to zero uniformly in $u$.
	    \end{proof}
	    
	    \begin{proof}[Proof of Theorem \ref{thm:consistencyquantile}]
 				In order to prove the almost sure convergence $\hat q_{\alpha,N}\xrightarrow{N\rightarrow\infty}\tilde q_\alpha$, we only have to note that for $u$ large enough $u\mapsto\mathrm{EEC}_{\cG}(u)$ is strictly monotonically decreasing and therefore the combination of Lemma \ref{lemma:inverse-delta-method} and \ref{lemma:ConvergenceQuantile} yields the claim.  
	    \end{proof}

	  \subsection{Proof of Theorem \ref{thm:AsymptoticValidity}}
        Note that by \cite[Theorem 4.3]{Taylor2005} we have for $\mathcal{Z}$ a mean zero Gaussian process over a parameter set $\mathcal{T}$ that
        \begin{equation*}
            \liminf_{u\rightarrow\infty} -u^2\log\left\vert \Prb\left( \max_{t\in \mathcal{T}} \mathcal{Z}(t)\geq u \right) - \mathrm{EEC}_{\mathcal{Z}}(u) \right\vert \geq \frac{1}{2}\left( 1+ \frac{1}{2\sigma_c^2} \right)\,,
        \end{equation*}
        where $\sigma_c^2$ is a variance depending on an associated process to $\mathcal{Z}$.  This implies that there is a $\tilde u$ such that for all $u\geq \tilde u$ we have that
        \begin{equation}\label{eq:ECHequation}
                \left\vert \Prb\left( \max_{t\in \mathcal{T}} \mathcal{Z}(t) \geq u \right) -  \mathrm{EEC}_{\mathcal{Z}}(u)  \right\vert \leq e^{-\big(\tfrac{1}{2}+\tfrac{1}{2\sigma_c^2}\big) u^2 }\,.
        \end{equation}		  
		Equipped with this result using the definition $M=\max_{s\in \cS}\cG(s)$ and $\vert M \vert=\max_{s\in \cS}\vert \cG(s)\vert$ we compute
        \begin{align*}
			      \Big\vert 1-&\alpha-\Prb\big( ~\forall s\in \cS:~\eta(s)\in SCB(s,\hat q_{\alpha,N} ) \big) \Big\vert \leq \left\vert -\alpha+ \Prb\Bigg( \max_{s\in \cS} \left\vert \tau_N \frac{\hat\eta_N(s) - \eta(s)}{\hat\varsigma_N(s)} \right\vert > \hat q_{\alpha,N} \Bigg) \right\vert\\
			      &\leq \left\vert \Prb\Bigg( \vert M \vert> \hat q_{\alpha,N} \Bigg) -\alpha \right\vert + \left\vert \Prb\Bigg( \max_{s\in \cS} \left\vert \tau_N \frac{\hat\eta_N(s) - \eta(s)}{\hat\varsigma_N(s)} \right\vert > \hat q_{\alpha,N} \Bigg) - \Prb\Bigg( \vert M \vert> \hat q_{\alpha,N} \Bigg) \right\vert = I + I\!I\,.
        \end{align*}
        Here $I\!I$ converges to zero for $N$ tending to infinity by the fCLT for $\hat\eta_N$ and the consistent estimation of $\hat\varsigma_N$ from \textbf{(E1-2)}. Therefore it remains to treat $I$.

		To deal with this summand, note that
		  \begin{equation*}
		    \Prb\Bigg( \vert M \vert > \hat q_{\alpha,N} \Bigg) = \Prb\Bigg( \max_{s\in \cS}\vert \cG(s)\vert^2> \hat q_{\alpha,N}^2 \Bigg) = \Prb\Bigg( \max_{(s,v)\in \cS\times S^0} \mathcal{Z}(s,v) > \hat q_{\alpha,N} \Bigg)\,,
		  \end{equation*}
            where the Gaussian random process $\mathcal{Z}$ over $\mathcal{T}=\cS\times S^0$, where $S^0=\{1,-1\}$, is defined by $\mathcal{Z}(s,v) = v\cdot\mathcal{G}(s)$.
            
            Using the above equality, $\widehat{\mathrm{EEC}}_{t_{N-1}}(\hat q_{\alpha,N}) =\alpha/2$, i.e. the definiton of our estimator $\hat q_{\alpha,N}$ from Equation \eqref{eq:ualpha} and Lemma \ref{lemma:ConvergenceQuantile} we have that
			\begin{align*}
                    I = &\left\vert \Prb\Bigg( \max_{s\in \cS\times S^0} \mathcal{Z}(s,v) > \hat q_{\alpha,N} \Bigg) - 2\widehat{\mathrm{EEC}}_{t_{N-1}}(\hat q_{\alpha,N}) \right\vert \\
                    &~~ ~ ~ ~ ~ ~ ~~ ~ ~ ~ ~ ~ ~ ~ ~ ~ ~ ~ ~\xrightarrow{N\rightarrow\infty} \left\vert \Prb\Bigg( \max_{(s,v)\in \cS\times S^0} \mathcal{Z}(s,v)> \tilde q_\alpha \Bigg) - 2\mathrm{EEC}_{\cG}(\tilde q_\alpha) \right\vert. 
			\end{align*}
            Thus, using the fact that $L_d(\mathcal{T},\mathcal{Z}) = L_0(S^0)L_d(\cS,\cG) =2L_d(\cS,\cG)$ and \eqref{eq:ECHequation} and the observation that $\tilde q_\alpha$ is monotonically increasing in $\alpha$ for $\alpha$ small enough, we can bound $I$ by
            \begin{equation*}
                I = \left\vert \Prb\Bigg( \max_{(s,v)\in \cS\times S^0} \mathcal{Z}(s,v)> \tilde q_\alpha \Bigg) - \mathrm{EEC}_{\mathcal{Z}}(\tilde q_\alpha) \right\vert \leq e^{-\tfrac{1}{2+2\sigma_c^2}  \tilde q_\alpha^2 }
            \end{equation*}
            for all $\alpha$ smaller than some $\alpha'$, which finishes the proof.

	  \qed
	  \begin{remark}
	   The specific definition of $\sigma_c$ associated with the Gaussian process $\mathcal{Z}$ can be found in \cite{Taylor2005}.
	  \end{remark}

    \subsection{Proofs of Theorem \ref{thm:GenericAsymptoticCBs} and \ref{thm:GenericAsymptoticCBdiffs}}
    The following Lemma provides almost sure uniform convergence results and will be used often in the following proofs.
          \begin{lemma}\label{Lem:UniformConvergence}
		Assume that $X$ and $Y$ are $(\cL^1, \delta)$-Lipschitz processes. Let $X_1,...,X_N\iid X$ and $Y_1,...,Y_N\iid Y$ be two samples. Then $\overline{\textbf{X}} \xrightarrow{N\rightarrow\infty} \EE[X]$ uniformly almost surely. If $X$ and $Y$ are $(\cL^2, \delta)$-Lipschitz processes with finite second $\cC(\cS)$-moments. Then $\widehat{\rm cov}_N \!\left[ \textbf{X},  \textbf{Y}\, \right]  \xrightarrow{N\rightarrow\infty} {\rm cov}\!\left[ X,  Y\, \right]$ uniformly almost surely.
	\end{lemma}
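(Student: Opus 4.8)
The plan is to reduce both statements to a strong law of large numbers (SLLN) for i.i.d.\ random elements in the separable Banach space $\big(\cC(\cS),\Vert\cdot\Vert_\infty\big)$; separability is guaranteed because $\cS$ is a compact metric space. For the first assertion I would first note that the Lipschitz bound \eqref{eq:LipAssump}, combined with the assumed continuity of $\delta$ with respect to the Euclidean metric and the almost sure finiteness of $A$, forces the sample paths of $X$ and $Y$ to be almost surely continuous, so that they are genuine $\cC(\cS)$-valued random elements. Proposition \ref{prop:redundancy} with $p=1$ then gives $\EE\big[\Vert X\Vert_\infty\big]<\infty$, and the SLLN in separable Banach spaces (see, e.g., \citet{Ledoux2013}) yields $\overline{\textbf{X}}=\tfrac1N\sum_{n=1}^N X_n\to\EE[X]$ in $\Vert\cdot\Vert_\infty$ almost surely, which is exactly the claimed uniform convergence; the identical argument applies to $\textbf{Y}$.

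For the covariance I would rewrite the empirical covariance function pointwise as
\[
\widehat{\rm cov}_N[\textbf{X},\textbf{Y}] = \frac{N}{N-1}\Big(\overline{\textbf{XY}} - \overline{\textbf{X}}\,\overline{\textbf{Y}}\Big),\qquad \overline{\textbf{XY}}(s)=\tfrac1N\sum_{n=1}^N X_n(s)Y_n(s),
\]
and apply the first assertion to each of the three averages. The decisive step is to verify that the product process $XY$ again belongs to the Lipschitz class of Part~1. From the elementary estimate
\[
\big\vert X(s)Y(s)-X(s')Y(s')\big\vert \leq \big(\Vert X\Vert_\infty A_Y + \Vert Y\Vert_\infty A_X\big)\,\delta(s,s'),
\]
one reads off that $XY$ is $(\cL^1,\delta)$-Lipschitz with Lipschitz variable $A=\Vert X\Vert_\infty A_Y+\Vert Y\Vert_\infty A_X$; the integrability $\EE[A]<\infty$ follows from the Cauchy--Schwarz inequality, using that $X$ and $Y$ have finite second $\cC(\cS)$-moments while $A_X,A_Y\in\cL^2$ by the $(\cL^2,\delta)$-Lipschitz hypothesis. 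Part~1 applied to $XY$ then delivers $\overline{\textbf{XY}}\to\EE[XY]$ uniformly almost surely.

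To finish, I would combine the three uniform limits: since uniform convergence is preserved under finite sums and products of sequences whose limits are bounded, and since $N/(N-1)\to 1$, it follows that
\[
\widehat{\rm cov}_N[\textbf{X},\textbf{Y}]\xrightarrow{N\rightarrow\infty}\EE[XY]-\EE[X]\EE[Y]={\rm cov}[X,Y]
\]
uniformly almost surely. I expect the main obstacle to be exactly the middle step---showing that the product stays inside the $(\cL^1,\delta)$-Lipschitz class with an integrable Lipschitz variable---since this is where the upgrade from $\cL^1$ to $\cL^2$ hypotheses and the finite second $\cC(\cS)$-moment assumption enter in an essential way; everything else is a routine transfer of the Banach-space SLLN and standard stability of uniform limits.
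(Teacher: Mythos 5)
Your proposal is correct, but it reaches the conclusion by a genuinely different route than the paper. You invoke the Mourier strong law of large numbers for i.i.d.\ random elements of the separable Banach space $\big(\cC(\cS),\Vert\cdot\Vert_\infty\big)$, using Proposition \ref{prop:redundancy} to supply the first-moment condition $\EE\Vert X\Vert_\infty<\infty$; the paper instead uses a generic uniform convergence theorem (Davidson's Theorem 21.8), combining the pointwise real-valued SLLN with \emph{strong stochastic equicontinuity}, which it verifies directly from the bound $\vert N^{-1}\sum_n(X_n(s)-X_n(s'))-\EE[X(s)-X(s')]\vert\leq C_N\,\delta(s,s')$ with $C_N\to 2\EE[A]$ a.s. For the covariance, the substance is the same in both arguments --- the product $XY$ satisfies $\vert X(s)Y(s)-X(s')Y(s')\vert\leq(\Vert X\Vert_\infty A_Y+\Vert Y\Vert_\infty A_X)\delta(s,s')$ with an $\cL^1$ Lipschitz variable by Cauchy--Schwarz --- but you package it as closure of the $(\cL^1,\delta)$-Lipschitz class under products and then re-apply Part 1 to $\overline{\textbf{XY}}$, $\overline{\textbf{X}}$, $\overline{\textbf{Y}}$ separately, whereas the paper reduces w.l.o.g.\ to mean zero and establishes equicontinuity of the empirical product average directly. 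Your route is shorter and more conceptual, at the price of importing Banach-space probability (and implicitly requiring Borel measurability of $X$ as a $\cC(\cS)$-valued element, which is standard given a.s.\ continuous paths and separability); the paper's route is more elementary and self-contained, using only scalar SLLNs, and makes the role of the Lipschitz structure fully explicit. One cosmetic merit of your version: by writing $\widehat{\rm cov}_N=\tfrac{N}{N-1}(\overline{\textbf{XY}}-\overline{\textbf{X}}\,\overline{\textbf{Y}})$ you handle the cross term and the $N/(N-1)$ normalisation explicitly, which the paper's w.l.o.g.\ reduction leaves implicit.
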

      \begin{proof}
		  \emph{First claim:} Using the generic uniform convergence result \citet[Theorem 21.8]{Davidson1994}, we only need to estalish strong stochastical equicontinuity (SSE) of the random function $\overline{\textbf{X}} - \EE[X]$, since pointwise convergence is obvious by the SLLNs. SSE, however, can be easily established using \cite[Theorem 21.10 (ii)]{Davidson1994}, since
		  \begin{equation*}
			\left\vert N^{-1} \sum_{n=1}^N \big( X_n(s)-X_n(s') \big) - \EE[X(s)-X(s')] \right\vert \leq \left( N^{-1} \sum_{n=1}^N A_n + \EE[A]\right)\delta(s,s') =C_N\delta(s,s')
		  \end{equation*}
		  for all $s,s'\in\cS$. Here $A_1,...,A_N\iid A$ denote the random variables from the $(\cL^1, \delta)$-Lipschitz property of the $X_n$'s and $X$ and hence the random variable $C_N$ converges almost surely to the constant $2 \EE[A]$ by the SLLNs.
		  
		  \emph{Second claim:} Adapting the same strategy as above and assuming w.l.o.g. $\EE[X]=\EE[Y]=0$, we compute
		  \begin{align*}
		   \left\vert \frac{1}{N} \sum_{n=1}^N X_n(s)Y_n(s) - X_n(s')Y_n(s') \right\vert
		  				&\leq \left(\frac{1}{N} \sum_{n=1}^N M(X_n)B_n+M(Y_n) A_n \right)\delta(s,s')\\
		  				&\leq \left( \sqrt{ \sum_{n=1}^N \tfrac{M(X_n)^2}{N} }\sqrt{ \sum_{n=1}^N \tfrac{B_n^2}{N} }+ \sqrt{ \sum_{n=1}^N \tfrac{M(Y_n)^2}{N} }\sqrt{ \sum_{n=1}^N \tfrac{A_n^2}{N} } \right)\delta(s,s')\,,
		  \end{align*}
		  where $M(X) = \max_{s\in\cS} \vert X(s)\vert$ and $B_1,...,B_N\iid B$ denote the random variables from the $(\cL^2, \delta)$-Lipschitz property of the $Y_n$'s and $Y$. Again by the SLLNs the random Lipschitz constant converges almost surely and is finite, since $X$ and $Y$ have finite second $\cC(\cS)$-moments and are $(\cL^2,\delta)-$Lipschitz.
	\end{proof}
	\begin{lemma}\label{lemma:KLexpansion}
		  Let $\fc$ be a covariance function. Then
		  \begin{enumerate}
		   \item[(i)] If $\fc$ is continuous and has continuous partial derivatives up to order $K$, then the Gaussian process $\cG(0,\fc)$ has $\cC^K$-sample paths with almost surely uniform and absolutely convergent expansions
		   		  \begin{equation}
					   D^I\!\cG(s) =  \sum_{i=1}^\infty \sqrt{\lambda_i} A_i D^I\!\varphi_i(s)\,,
				  \end{equation}
				  where $\lambda_i, \varphi_i$ are the eigenvalues and eigenfunctions of the covariance operator of $Z$ and $\{A_i\}_{i\in\N}$ are i.i.d. $\cN(0,1)$.
		    \item[(ii)] If $Z$ and  all its partial derivatives $D^I\!Z$ with multi-indices satisfying $\vert I \vert \leq K$, $K\in \N$, are $(\cL^2,\delta)-$Lipschitz processes with finite $\cC(\cS)$-variances, then $\fc$ is continuous and all partial derivatives $D^I_s\!D^{I'}_{s'}\!\fc(s,s')$ for $I,I'\leq K$ exist and are continuous for all $s,s'\in\cS$.
		  \end{enumerate}
	\end{lemma}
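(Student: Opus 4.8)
For part (i) the plan is to build everything on Mercer's theorem and the Karhunen--Loève (KL) expansion. Since $\fc$ is continuous on the compact set $\cS\times\cS$, Mercer's theorem gives the uniformly convergent expansion $\fc(s,s')=\sum_{i=1}^\infty\lambda_i\varphi_i(s)\varphi_i(s')$, and the classical It\^o--Nisio criterion (as in \citet{Adler2007}) gives that the KL series $\cG(s)=\sum_{i=1}^\infty\sqrt{\lambda_i}A_i\varphi_i(s)$ converges uniformly almost surely. The first step is to upgrade the regularity of the eigenfunctions: differentiating the eigenvalue equation $\lambda_i\varphi_i(s)=\int_\cS\fc(s,t)\varphi_i(t)\,dt$ under the integral sign, which is justified by the assumed continuity of the partial derivatives of $\fc$, yields $\lambda_i D^I\varphi_i(s)=\int_\cS D^I_s\fc(s,t)\varphi_i(t)\,dt$, so that each $\varphi_i\in\cC^K(\cS)$. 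The second step is the diagonal identity $\sum_{i=1}^\infty\lambda_i\big(D^I\varphi_i(s)\big)^2=\big[D^I_sD^I_{s'}\fc(s,s')\big]_{s'=s}$ for every $\vert I\vert\le K$; it comes from mean-square differentiability of $\cG$ (which the smoothness of $\fc$ guarantees) together with the fact that the mean-square derivative has covariance $D^I_sD^I_{s'}\fc$. Because the partial sums on the left are monotone, continuous, and converge to a continuous limit, Dini's theorem upgrades this to uniform convergence.

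The crucial step is then to pass from uniform convergence of the variance series to almost sure uniform convergence of the differentiated Gaussian series $\sum_{i=1}^\infty\sqrt{\lambda_i}A_i D^I\varphi_i(s)$. I would invoke the standard criterion that uniform convergence of $\sum_{i=1}^\infty\lambda_i\big(D^I\varphi_i(s)\big)^2$ forces almost sure uniform (and, with a mild summability refinement, absolute) convergence of the associated centered Gaussian series to a continuous process. Applying this simultaneously to the finitely many multi-indices $\vert I\vert\le K$, uniform convergence of the formally differentiated series together with convergence of the lower-order series lets me conclude that the limit is genuinely $K$-times differentiable with $D^I\cG(s)=\sum_{i=1}^\infty\sqrt{\lambda_i}A_i D^I\varphi_i(s)$. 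This interchange of limit and differentiation is the main obstacle, since it requires the uniform control of the derivative series rather than merely pointwise or mean-square convergence; everything else is bookkeeping around Mercer's theorem.

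For part (ii) the strategy is to show that differentiation and expectation may be interchanged, so that $D^I_sD^{I'}_{s'}\fc(s,s')=\EE\big[D^IZ(s)\,D^{I'}Z(s')\big]$ for all $I,I'\le K$. First I would verify that each $D^IZ$, which exists pathwise by assumption, is also the mean-square derivative: the mean value theorem bounds the relevant difference quotients by $\sup_\xi\vert D^IZ(\xi)\vert$, which is square-integrable by the finite $\cC(\cS)$-variance hypothesis, so dominated convergence yields $\cL^2$-convergence of the quotients to $D^IZ$. It is then classical for mean-square differentiable processes that the mixed derivative of the covariance exists and equals the covariance of the derivatives. Finally, continuity of $(s,s')\mapsto\EE\big[D^IZ(s)D^{I'}Z(s')\big]$ follows from the $(\cL^2,\delta)$-Lipschitz property via Cauchy--Schwarz: decomposing an increment as $\EE\big[(D^IZ(s)-D^IZ(t))\,D^{I'}Z(s')\big]+\EE\big[D^IZ(t)\,(D^{I'}Z(s')-D^{I'}Z(t'))\big]$ and bounding each term by the $\cL^2$-Lipschitz constant times $\delta$ times an $\cL^2$-norm gives continuity, which is the only mildly technical point here.
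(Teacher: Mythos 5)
Part (ii) of your argument is essentially the paper's: continuity via Cauchy--Schwarz and the $(\cL^2,\delta)$-Lipschitz bound, and existence of the mixed partials by interchanging a difference-quotient limit with the expectation via dominated convergence (the paper dominates the quotient by $A\,Z(s')$ using the Lipschitz random variable $A$, you dominate by $\sup_\xi\vert D^IZ(\xi)\vert$; both work under the stated hypotheses). No issue there.

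Part (i) is where the two proofs diverge, and where your version has a genuine gap. The paper simply outsources the hard analytic content to \citet[Theorem 5.1 and eq.\ (4.3)]{Ferreira2012}, which deliver both $\varphi_i\in\cC^K(\cS)$ and, crucially, an explicit summable bound of the type $\sum_i\sqrt{\lambda_i}\,\Vert D^I\varphi_i\Vert_\infty<\infty$, from which absolute and a.s.\ uniform convergence of the differentiated series is immediate. Your reconstruction of the eigenfunction regularity (differentiating $\lambda_i\varphi_i(s)=\int_\cS\fc(s,t)\varphi_i(t)\,dt$ under the integral) is fine, and Dini's theorem does give uniform convergence of the diagonal variance series $\sum_i\lambda_i\big(D^I\varphi_i(s)\big)^2$. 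But the ``standard criterion'' you then invoke --- that uniform convergence of the variance series forces a.s.\ uniform convergence of the Gaussian series --- is not a theorem. There exist continuous covariance functions whose centered Gaussian processes are a.s.\ unbounded (so no continuous version exists and the Karhunen--Lo\`eve series cannot converge uniformly a.s.), yet for these Mercer plus Dini still gives uniform convergence of $\sum_i\lambda_i\varphi_i(s)^2$ to $\fc(s,s)$. Your argument hits exactly this wall at the top order $\vert I\vert=K$, where $D^I\cG$ has a covariance that is merely continuous and no residual smoothness is left to exploit. The converse direction you need (via It\^o--Nisio) requires first establishing that $D^I\cG$ admits a continuous version, e.g.\ through a Dudley entropy bound derived from the smoothness of $\fc$, or through the quantitative bounds of Ferreira--Menegatto; the latter is also what justifies the \emph{absolute} convergence you dismiss as ``a mild summability refinement,'' since absolute convergence needs $\sum_i\sqrt{\lambda_i}\,\vert A_i\vert\,\Vert D^I\varphi_i\Vert_\infty<\infty$ and not merely square-summability of the terms.
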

	\begin{proof}

	    (i) Since $\fc$ is continuous the process $\cG(0,\fc)$ is mean-square continuous. Hence there is a Karhunen-Lo\'eve expansion of the form
	    \begin{equation}
			\cG(s) =  \sum_{i=1}^\infty \sqrt{\lambda_i} A_i \varphi_i(s)\,,
	    \end{equation}
	    with $\lambda_i, \varphi_i$ are the eigenvalues and eigenfunctions of the covariance operator associated with $\fc$ and $\{A_i\}_{i\in\N}$ are i.i.d. $\cN(0,1)$. From \citet[Theorem 5.1]{Ferreira2012} we have that $D^{I}\varphi\in \cC^K(\cS)$. Moreover, it is easy to deduce from their equation (4.3) that
	    \begin{equation}
			\cG(s) =  \sum_{i=1}^\infty \sqrt{\lambda_i} A_i D^{I}\varphi(s)\,,
	    \end{equation}	    
	    is almost surely absolutely and uniformly convergent. Note that the assumption that $\fc\in\cC^{2s}(\cS\times\cS)$ is too strong in their article. They in fact only require in their proofs that all partial derivatives $D^{I}_sD^{I'}_{s'}\fc(s,s')$ for $I,I'<s$ exist and are continuous. 
	    
	    	    (ii): The continuity is a simple consequence of the $(\cL^2,\delta)-$Lipschitz property and the finite $\cC(\cS)$-variances. Let $X$ be a process with these properties, then using the Cauchy-Schwarz inequality
	    \begin{align*}
		  \vert\fc(s,t)-\fc(s',t')\vert &\leq \left\vert \EE\left[ (X_s-X_{s'})X_t + (X_t-X_{t'})X_{s'} \right] \right\vert \\
								    &\leq \EE\left[ \vert X_s-X_{s'}\vert \vert X_t\vert\right] + \EE\left[ \vert X_t-X_{t'}\vert \vert X_{s'}\vert \right]\\
								    &\leq  \sqrt{\EE\left[ (X_s-X_{s'})^2\right]} \sqrt{\EE\left[  \max_{t\in\cS}  X_t^2\right]} + \sqrt{\EE\left[ ( X_t-X_{t'} )^2\right]} \sqrt{\EE\left[  \max_{s'\in\cS} X_{s'}^2 \right]}\\
								    &\leq C \left( \delta(s,s') + \delta(t,t') \right)
	    \end{align*}
	    for some $C<\infty$ and therefore $\fc$ and the covariances of $D^I\!Z$ are continuous. We only show that  $D^d_s\fc(s,s')$ exists and is continuous. The argument is similar for the higher partial derivatives. From the definition we obtain for all $s,s'$
	    \begin{align*}
		  \lim_{h\rightarrow0} h^{-1}\big( \fc(s,s')-\fc(s+he_d,s') \big) &=  \lim_{h\rightarrow0} \EE\left[ h^{-1}\big( Z(s)-Z(s+he_d) \big) Z(s')\right]\,,
	    \end{align*}
	    where $e_d$ denotes the $d$-th element of the standard basis of $\R^D$. Thus, we only have to prove that we can interchange limits and integration. The latter is an immediate consequence of Lebesgue's dominated convergence theorem, where we obtain the $\cL^1$ majorant from the $(\cL^2,\delta)-$Lipschitz property as $AZ(s')$, where $A\in \cL^2$.
	\end{proof}

	\begin{proof}[Proof of Theorem \ref{thm:GenericAsymptoticCBs}]
		  (i) Since $Z$ is $(\cL^2, \delta)$-Lipschitz the main result in \citet{Jain1975} immediatly implies $\textbf{(E1)}$ with $\tau_N=\sqrt{N}$ and $\fr=\fc$. Condition \textbf{(E2)} is obtained from the second part of Lemma \ref{Lem:UniformConvergence}, since $\sigma(s) Z(s)$ is $(\cL^2, \delta)$-Lipschitz and has finite second $\cC(\cS)$-moment.
		  
		  (ii) We only need to show that the Gaussian limit process with the covariance $\fc$ fulfills \textbf{(G1)} and \textbf{(G3)}. Note that condition \textbf{(G3)} is a consequence of \textbf{(G1)} and the $\cC^3$-sample paths by Remark \ref{rem:GKFassumptions}. But \textbf{(G1)} is already a consequence of Lemma \ref{lemma:KLexpansion}.
      \end{proof}
      
      \begin{lemma}\label{lemma:G2assumption}
	    Let $Z$ fulfill the assumptions of Theorem \ref{thm:GenericAsymptoticCBs} (ii) except for \textbf{(G2)} and for all $d,l\in\{1,...,D\}$ suppose that $\cov\!\left[ \left( D^d\! Z(s) , D^{(d,l)}\!Z(s)\right) \right]$ has full rank for all $s$. Then $\cG=\cG(0,\fc)$ fullfills \textbf{(G2)}.
      \end{lemma}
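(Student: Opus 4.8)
The plan is to exploit the single fact that $Z$ and the Gaussian limit $\cG=\cG(0,\fc)$ share the same covariance function $\fc$: this forces their derivative processes to have identical covariance matrices, and since $\cG$ is Gaussian, nondegeneracy of a finite-dimensional marginal is equivalent to that covariance matrix having full rank, which is precisely the hypothesis.

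First I would collect the regularity already available. Under the assumptions of Theorem \ref{thm:GenericAsymptoticCBs}(ii), $Z$ and all its partial derivatives $D^I Z$ with $|I|\le 3$ are $(\cL^2,\delta)$-Lipschitz with finite $\cC(\cS)$-variances, so Lemma \ref{lemma:KLexpansion}(ii) guarantees that the mixed partial derivatives $D^I_s D^{I'}_{s'}\fc(s,s')$ exist and are continuous for $|I|,|I'|\le 3$. In particular the mean-square derivatives $D^d Z(s)$ and $D^{(d,l)} Z(s)$ are well defined, and the entries of their joint covariance matrix are obtained by differentiating $\fc$ and restricting to the diagonal, e.g. $\cov[D^d Z(s), D^{(d,l)} Z(s)] = D^d_s D^{(d,l)}_{s'}\fc(s,s')\big|_{s'=s}$, with the two variance entries given analogously.

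Next I would observe that exactly the same differentiation formulas apply to $\cG$. Since $\cG$ has covariance $\fc$, Lemma \ref{lemma:KLexpansion}(i) yields $\cC^2$ (in fact $\cC^3$) sample paths, so the pathwise derivatives $D^d\cG(s)$ and $D^{(d,l)}\cG(s)$ coincide with the mean-square derivatives, whose covariances are again the corresponding diagonal partial derivatives of the \emph{same} function $\fc$. Hence, for every $s\in\cS$ and all $d,l\in\{1,\dots,D\}$,
\begin{equation*}
	\cov\!\left[ \left( D^d\cG(s), D^{(d,l)}\cG(s) \right) \right] = \cov\!\left[ \left( D^d Z(s), D^{(d,l)} Z(s) \right) \right].
\end{equation*}

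Finally, because $\cG$ is Gaussian, $\left(D^d\cG(s), D^{(d,l)}\cG(s)\right)$ is a bivariate Gaussian vector, and such a vector is nondegenerate exactly when its covariance matrix has full rank; by hypothesis the right-hand side above has full rank for all $s$, so the left-hand side does too, which is \textbf{(G2)}. The only genuinely delicate point is the displayed identity, namely that mean-square differentiation commutes with taking covariances and that the pathwise and mean-square derivatives agree for $\cG$; both are furnished by Lemma \ref{lemma:KLexpansion}, so once that bookkeeping is in place the conclusion is immediate.
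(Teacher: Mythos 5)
Your proposal is correct and follows essentially the same route as the paper: both arguments reduce to the observation that the covariance matrix of $\left(D^d\cG(s),D^{(d,l)}\cG(s)\right)$ coincides with that of $\left(D^d Z(s),D^{(d,l)}Z(s)\right)$ because both are determined by the mixed partial derivatives of the common covariance $\fc$, and that for a Gaussian vector nondegeneracy is equivalent to nonsingularity of the covariance matrix. The only difference in emphasis is that the paper devotes its effort to justifying the joint Gaussianity of the derivative vector via the Karhunen--Lo\'eve expansion of Lemma \ref{lemma:KLexpansion}(i), a step you state somewhat more casually, while you are more explicit about why the two covariance matrices agree; both points are covered by Lemma \ref{lemma:KLexpansion}, so the arguments are interchangeable.
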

      \begin{proof}
		  Using the series expansions from Lemma \ref{lemma:KLexpansion} we have that for multi-indices $I_1,..., I_P$, $K\in\N$ and all $v\in \R^P$ it follows that
		   \begin{equation}
			  \left( D^{I_1}\cG(s),..., D^{I_K}\cG(s) \right) v^T
				    =\sum_{i=1}^\infty \sqrt{\lambda_i} A_i \sum_{p=1}^P v_p D^{I_p}\!\varphi_i(s)
		   \end{equation}
		   is convergent for all $s$ (even uniformly). Note that we used here that the expansions are absolutely convergent such that we can change orders in the infinite sums. Thus, it is easy to deduce that $ \left( D^{I_1}\cG,..., D^{I_P}\cG \right)$ is a Gaussian process.
	      
		  Therefore $\big( D^d\cG(s) , D^{(d,l)}\cG(s) \big)$ is a multivariate Gaussian random variable for all $s\in\cS$, which is nondegenerate if and only if its covariance matrix is non-singular. But this is the case by the assumption, since it is identical to the covariance matrix $\cov\!\left[ \left( D^d\! Z(s) , D^{(d,l)}\!Z(s)\right) \right]$.  
      \end{proof}
	\begin{proof}[Proof of Theorem \ref{thm:GenericAsymptoticCBdiffs}]      
	 The proof is almost identical to the proof of Theorem \ref{thm:GenericAsymptoticCBs} and therefore omitted.
	\end{proof}

      \subsection{Proof of Theorem \ref{thm:CovarianceConsistency}}
       First note that using the definition of $R_n$ from Equation \ref{def:normedresiduals} we obtain
	\begin{equation*}
		     D^d\! \big( R_n \big) =  D^d\! \big(\tfrac{\mu - \hat\mu}{\hat\sigma_N} \big) +  D^d\!\big(\tfrac{\sigma}{\hat\sigma_N}\big)Z_n + \tfrac{\sigma}{\hat\sigma_N}D^d\! Z_n\,.
	\end{equation*}
	Thus, the entries of the sample covariance matrix $\hat\Lambda_{N}$ are given by
	\begin{align}
	  \begin{split}\label{eq:CovDecompostion}
		  \widehat{\cov}\!\left[ D^d \textbf{R}, D^l \textbf{R} \right] =
			  &~~~ \widehat{\var} \big[ \textbf{Z} \big]   D^d\!\big(\tfrac{\sigma}{\hat\sigma_N}\big)D^l\!\big(\tfrac{\sigma}{\hat\sigma_N}\big)
			  + \widehat{\cov} \left[ \textbf{Z}, D^d\textbf{Z} \right] \tfrac{\sigma}{\hat\sigma_N}   D^l\!\left(\tfrac{\sigma}{\hat\sigma_N}\right)\\
			  &+ \widehat{\cov} \left[ \textbf{Z}, D^l\textbf{Z} \right] \tfrac{\sigma}{\hat\sigma_N}   D^d\!\left(\tfrac{\sigma}{\hat\sigma_N}\right)
			  +\widehat{\cov} \left[ D^d\textbf{Z}, D^l\textbf{Z} \right] \tfrac{\sigma^2}{\hat\sigma_N^2}
	  \end{split}
	\end{align}
	Now, the second part of Lemma \ref{Lem:UniformConvergence} applied to $Z$ and $D^l\! Z$ together with the Assumptions \textbf{(L)} and \textbf{(E2)} imply that the first three summands on the r.h.s. converge to zero almost surely in $\cC(\cS)$. Therefore, applying again Lemma \ref{Lem:UniformConvergence} to the remaining summand, we obtain
	\begin{equation*}
		    \widehat{\cov}_N\!\left[ D^d\textbf{R}, D^l \textbf{R} \right]\xrightarrow{N\rightarrow\infty}  \cov\left[  D^d\!Z, D^l\!Z \right]=\Lambda_{dl}
	\end{equation*}
	uniformly almost surely. Thus, $\hat\Lambda_N\rightarrow\Lambda$ uniformly almost surely.
	
	Assume that $X,Y$ are $(\cL^4, \delta)$-Lipschitz with finite forth $\cC(\cS)$-moments, then
	\begin{align*}
	 \vert X(s)Y(s) - X(s')Y(s') \vert \leq \big( M(X)A+ M(Y)B \big)\delta(s,s')
	\end{align*}
	with $A,B$ the random variables in the $(\cL^4, \delta)$-Lipschitz property of $Y,X$. Note that
	\begin{align*}
	 \EE\left[ \big( M(X)A+ M(Y)B \big)^2\right] &\leq 2\EE\left[ (M(X)A)^2 + (M(Y)B)^2\right]\\
	 											 &\leq 2 \sqrt{\EE[M(X)^4]\EE[A^4]} + 2\sqrt{\EE[M(Y)^4]\EE[B^4]} <\infty\,.
	\end{align*}
	by $(a+b)^2\leq 2a^2+2b^2$ for all $a,b\in\R$ and the Cauchy-Schwarz inequality.
	Thus, a sample $X_1Y_1,..., X_NY_N\iid XY$ fulfills the assumptions for the CLT in $\cC(\cS)$ given in \cite{Jain1975}. Therefore, the following sums converge to a Gaussian process in $\cC(\cS)$:
	\begin{equation*}
		    \sqrt{N} \Big( \widehat{\var} \big[ \textbf{Z} \big] \Big)\,,~ ~ ~\sqrt{N}\widehat{\cov} \big[ \textbf{Z}, D^l\textbf{Z} \big]\,,~ ~ ~ \sqrt{N}\widehat{\cov} \left[  D^d\textbf{Z},  D^l\textbf{Z} \right] \text{ for }d,l=1,...,D\,.
	\end{equation*}
	Thus, using the latter together with equation \eqref{eq:CovDecompostion} and the Assumptions \textbf{(L)} and  \textbf{(E2)} we obtain
	\begin{equation*}
		    \sqrt{N} \Bigg(  \widehat{\cov}\!\left[ D^d\textbf{R}, D^l \textbf{R} \right] - \Lambda_{dl} \Bigg) \xRightarrow{N\rightarrow\infty}\cG\big( 0, \mathfrak{t}_{dl}  \big)
	\end{equation*}
	This combined with the standard multivariate CLT yields the claim.
      \qed
         
      \subsection{Proof of Theorem \ref{thm:LKCConsistency} and Corollary \ref{cor:LKCConsistency}}
      \begin{proof}[Proof of Theorem \ref{thm:LKCConsistency}]
            The almost sure uniform convergence of $\hat\Lambda_N$ to $\Lambda(s)$ from Theorem \ref{thm:CovarianceConsistency} implies that the integrands of equations \eqref{eq:L1IntegralEstimator}, \eqref{eq:L1L2IntegralEstimator} are almost surely uniform convergent, since $\hat\Lambda_N$ is composed with a differentiable function. Thus, we can interchange the limit $N\rightarrow\infty$ and the integral everywhere except for a set with measure zero. This gives the claim.       
      \end{proof}
      \begin{proof}[Proof of Corollary \ref{cor:LKCConsistency}]
	    We only have to check \textbf{(L)} and \textbf{(E2)} hold true. Therefore note that
	    \begin{equation*}
		      \hat\sigma_N^2(s)/\sigma^2(s) = \widehat{\var} \left[ \sigma(s)\textbf{Z}(s) \right]/\sigma^2(s)= \widehat{\var} \left[ \textbf{Z}(s) \right] \text{ and }D^l\!\!\left(\hat\sigma_N^2(s)/\sigma^2(s)\right)=2\,\widehat{\rm cov} \left[ \textbf{Z}(s), D^l\textbf{Z}(s) \right]\,.
	    \end{equation*}
	      Thus, both uniform convergence results follow from Lemma \ref{Lem:UniformConvergence}.
      \end{proof}
      
      \subsection{ Proof of Theorem \ref{thm:CLTlkc} }
      We want to use the functional delta method, e.g., \citet[Theorem 2.8]{Kosorok2008}. By Theorem \ref{thm:CovarianceConsistency} the claim follows, if we prove that the corresponding functions are Hadamard differentiable and can compute this derivative.
      
      \emph{Case 1D:} We have to prove that the function
      \begin{equation*}
		H:~~~ \big( \cC(\cS), \Vert\cdot\Vert_\infty \big) \rightarrow \R\,,~~~ f \mapsto \int_\cS \sqrt{f(s)}\, ds
      \end{equation*}
      is Hadamard differentiable. Therefore, note that the integral is a bounded linear operator and hence it is Fr\'echet differentiable with derivative being the integral itself. Moreover, $f\mapsto\sqrt{f}$ is Hadamard differentiable by \citet[Lemma 12.2]{Kosorok2008} with Hadamard derivative $DH_f(\alpha) = 1/\sqrt{4f}\alpha$ tangential even to the Skorohod space $D(\cS)$. Combining this, we obtain the limit distribution $\sqrt{N}(\hat L_1^N-L_1)$ from the fCLT for $\hat\Lambda_N$ given in Theorem \ref{thm:CovarianceConsistency} to be distributed as
      \begin{equation}
	      D\!H_\Lambda(G) = \frac{1}{2}\int_\cS \frac{ G(s) }{\sqrt{\Lambda(s)} } \,ds\,,
      \end{equation}
      where $G(s)$ is the asymptotic Gaussian process given in Theorem \ref{thm:CovarianceConsistency}.
      
       \emph{Case 2D:} The strategy of the proof is the same as in 1D, i.e. we need to calculate the Hadamard (Fr\'echet) derivative of
      \begin{align*}
		H:~~~ &\big( \cC(\cS) \times \cC(\cS)\times \cC(\cS), \Vert\cdot\Vert_\infty \big) \rightarrow \R^2\,,\\~~~ (f_1,f_2,f_3) &\mapsto \left( \frac{1}{2}\int_0^1 \sqrt{ \tfrac{d\gamma}{dt}^T\!\!(t) \iota\Big(f_1\big(\gamma(t)\big),f_2\big (\gamma(t)\big),f_3\big(\gamma(t)\big)\Big) \tfrac{d\gamma}{dt}\!(t)}\,dt
		    , \int_\cS \sqrt{ \det\big( \Lambda(s)\big) } ds_1ds_2\, \right).
      \end{align*}
    The arguments are the same as before. Thus, using the chain rule and derivatives of matrices with respect to their components the Hadamard derivative evaluated at the process $\cG$ is given by
    \begin{align*}
          d H_\Lambda(G) = &\Bigg( \frac{1}{2}\int_0^1 \frac{1}{\sqrt{\tfrac{d\gamma}{dt}^T\!\!(t)\Lambda\big(\gamma(t)\big)\tfrac{d\gamma}{dt}\!(t)}} {\rm tr}\Bigg( \Lambda\big(\gamma(t)\big) \iota\Big( G\big(\gamma(t)\big) \Big) \Bigg) dt,\\ &~~~~~~~~~~~~~~~\int_{\cS} \frac{1}{\sqrt{\det\big(\Lambda(s)\big)}} {\rm tr}\Big( \Lambda(s) \iota\big( {\rm diag}(1,-1,1)G(s) \big) \Big) ds \Bigg)\,.
    \end{align*}
    
    \subsection{ Proof of Corollary \ref{cor:1DCLT} }
    Note that it is well-known that the covariance function of the derivative of a differentiable process with covariance function $\fc$ is given by $\dot{\fc}(s,s') = D^1_sD^1_{s'} \mathfrak{c}(s,s')$. Moreover, using the moment formula for multivariate Gaussian processes we have that
    \begin{align*}
	\cov\left[ \big(D^1_{s}Z(s)\big)^2, \big(D^1_{s'}Z(s')\big)^2 \right] &= \EE\left[ \Big(\big(D^1_{s}Z(s)\big)^2 - \dot{\fc}(s,s) \Big)\Big(\big(D^1_{s'}Z(s')\big)^2 - \dot{\fc}(s',s') \Big) \right] \\
	                   &= \EE\left[ \big(D^1_{s}Z(s)\big)^2 \big(D^1_{s'}Z(s')\big)^2\right] - \dot{\fc}(s,s)\dot{\fc}(s',s') \\
	                   &= \dot{\fc}(s,s)\dot{\fc}(s',s') + 2\dot{\fc}(s,s')- \dot{\fc}(s,s)\dot{\fc}(s',s') \\
	                   &= 2\dot{\fc}(s,s').
    \end{align*}
    Combining this with the observation that the variance of the zero mean Gaussian random variable $\frac{1}{2}\int_{\cS} \frac{G(s)}{\sqrt{\Lambda(s)}} ds$ is given by
    \begin{equation*}
	  \tau^2 = \frac{1}{4}\int_{\cS}\int_{\cS} \tfrac{\cov\big[(D^1_{s}Z(s))^2, (D^1_{s'}Z(s'))^2 \big]}{\sqrt{\Lambda(s) \Lambda(s')}} dsds'
    \end{equation*}
    yields the claim.
    
    \subsection{ Proof of Theorem \ref{thm:CLTscaleProcess} }
		   We want to apply \citet[Theorem 10.6]{Pollard1990}. Therefore, except for the indices we adapt the notations of that theorem and define the necessary variables. Recall that $\max_{s\in\cS}\sigma(s)\leq B<\infty$. We obtain
		   \begin{align*}
			  f_{Nn}(s,h) &= \frac{1}{\sqrt{N}P }\sum_{p=1}^{P} \left(\sigma(s_{p})Z_n(s_{p}) + \varepsilon_{np}\right)  K( s - s_p, h)\\
			  F_{Nn}     &= \sqrt{\frac{2}{N}} \left( B\max_{s\in \cS} \vert Z_n(s) \vert + \max_{s\in\cS}\vert\varepsilon_n(s)\vert \right)\\
			  X_N(s,h)      &= \sum_{n=1}^N  f_{Nn}(s,h)\,.
		   \end{align*}
		    We have to establish the assumptions (i), (iii) and (iv) as (v) is trivially satisfied in our case and (ii) is Assumption \eqref{assumption:covarianceExists}. As discussed in \citet[p.1759]{Degras2011} the manageability (i) follows from the inequality
		    \begin{align*}
			\vert  f_{Nn}(s,h) -  f_{Nn}(s',h') \vert &\leq \frac{1}{\sqrt{N}} \sqrt{ \frac{1}{P} \sum_{p=1}^P \big( \sigma(s_p)Z_n(s_p) +\varepsilon_n(s_p) \big)^2 } \sqrt{ \frac{1}{P} \sum_{p=1}^P \big(  K( s - s_p, h) -  K( s' - s_p, h') \big)^2 } \\
			  &\leq \sqrt{\frac{2}{N}} \left( B\max_{s\in\cS} \vert Z_n(s) \vert + \max_{s\in\cS}\vert \varepsilon_n(s)\vert \right)L\Vert (s,h) - (s',h') \Vert^\alpha\\
			  &= LF_{Nn}\epsilon\,,
		    \end{align*}
		    if $\Vert (s,h) - (s',h') \Vert<\epsilon^{1/\alpha}$. Assumption (iii) follows since we can compute
		    \begin{align*}
			    \sum_{n=1}^N \EE\left[ F_{Nn}^2 \right] = N\EE\left[ F_{N1}^2 \right]
			    \leq 4B^2\EE\left[ \max_{s\in\cS} \vert Z_1(s)\vert^2 \right] + 4\EE\left[ \max_{s\in\cS}\varepsilon_1^2(s) \right] <\infty
		    \end{align*}
		    and (iv) is due to
		    \begin{align*}
			    \sum_{n=1}^N \EE\left[ F_{Nn}^2 I(F_{Nn}>\epsilon) \right] = N\EE\left[ F_{N1}^2 I\big( \sqrt{N}F_{N1}>\sqrt{N}\epsilon\big) \right] \xrightarrow{N\rightarrow\infty}0
		    \end{align*}
		    for all $\epsilon>0$, which follows from the convergence theorem for integrals with monotonically increasing integrands and the fact that by Markov's inequality
		    \begin{align}
			    \EE\left[ I\big( \sqrt{N}F_{N1}>\sqrt{N}\epsilon\big) \right] = \Pr\left( \sqrt{N}F_{N1}>\sqrt{N}\epsilon \right) \leq \frac{\EE\left[\sqrt{N}F_{N1}\right]}{\sqrt{N}\epsilon} \xrightarrow{N\rightarrow\infty}0 \,,
		    \end{align}
		    for fixed $\epsilon>0$.
		    
		    The weak convergence to a Gaussian process now follows from \citet[Theorem 10.6]{Pollard1990}.
		    \qed
		    
		    \subsection{Proof of Proposition \ref{prop:ScaleSCBsufficientCond}}
		    The first step is to establish that for each $N$ the process
		    \begin{equation}
			\tilde Z(s,h) = \frac{1}{P}\sum_{p=1}^{P} \big(\sigma(s_p)Z(s_p)+\varepsilon(s_p)\big) K( s - s_p,h )\,,
		    \end{equation}
		   which has $\cC^3$-sample paths, has finite second $\cC(\cS)$-moment with a constant uniformly bounded over all $N$, and the process itself and its first derivatives are $(\cL^2, \delta)$-Lipschitz again uniformly over all $N$, since then the same arguments as in the proof of Corollary \ref{cor:LKCConsistency} and Lemma \ref{Lem:UniformConvergence} will yield \textbf{(L)} and \textbf{(E2)} and hence the consistency of the LKC estimation by Theorem \ref{thm:LKCConsistency}. Therefore, note that 
		    \begin{align*}
			\vert \tilde Z(s,h) \vert &\leq \frac{1}{P}\sum_{p=1}^{P} \vert\sigma(s_p)Z(s_p)+\varepsilon(s_p)\vert \cdot\vert K( s - s_p,h ) \vert \\
			&\leq \sqrt{ \frac{1}{P} \sum_{p=1}^P \big( \sigma(s_p)Z(s_p) +\varepsilon(s_p) \big)^2 } \sqrt{ \frac{1}{P} \sum_{p=1}^P \big(  K( s - s_p, h) \big)^2 }\\
			&\leq \left( B\max_{s\in\cS} \vert Z(s)\vert + \max_{s\in\cS} \vert \varepsilon(s)\vert \right)\vert K( s , h) \vert\,.
		    \end{align*}
		    This yields using $(a+b)^2 \leq 2(a^2+b^2)$ that
		    \begin{equation*}
		      \EE\left[ \max_{(s,h)\in\cS\times\cH} \vert \tilde Z(s,h) \vert^2 \right] \leq 2 \left( B^2\EE\left[ \max_{s\in\cS} \vert Z(s) \vert^2 \right] + C \right) \max_{(s,h)\in\cS\times\cH} K( s , h)^2 <\infty\,,
		    \end{equation*}
		    where the bound is independent of $N$. Basically, the same argument yields the $(\cL^2,\Vert \cdot\Vert)$-Lipschitz property for $\tilde Z(s,h)$ and all of its partial derivatives up to order $3$ with a bounding $\cL^2$ random variable independent of $N$.
		    
		    The differentiability of the sample paths of $\cG(0,\fr)$ follows from Lemma \ref{lemma:KLexpansion}(i).
  \FloatBarrier
  \bibliographystyle{apalike}
    \bibliography{references.tex}
\end{document}